\documentclass{amsart}

%     If your article includes graphics, uncomment this command.
\usepackage{amsmath}
\usepackage{amssymb}
\usepackage{algpseudocode}
\usepackage[hmargin=30mm,vmargin=30mm]{geometry}
\usepackage{graphicx}
\usepackage{hyperref}
\usepackage{xcolor}
\usepackage[foot]{amsaddr}

\newtheorem{theorem}{Theorem}[section]
\newtheorem{lemma}[theorem]{Lemma}
\newtheorem{prop}[theorem]{Proposition}
\newtheorem{corollary}[theorem]{Corollary}
\newtheorem{property}[theorem]{Property}

\theoremstyle{definition}
\newtheorem{definition}[theorem]{Definition}
\newtheorem{example}[theorem]{Example}

\newtheorem{remark}[theorem]{Remark}

\newcommand{\R}{\mathbb{R}}
\newcommand{\C}{\mathbb{C}}
\newcommand{\Sp}{\mathbb{S}}

\title{On length measures of planar closed curves and the comparison of convex shapes}

\author{Nicolas Charon$^1$}
\address{$^1$ Department of Applied Mathematics and Statistics, Johns Hopkins University.}
\email{charon@cis.jhu.edu}

\author{Thomas Pierron$^2$}
\address{$^2$ Department of Mathematics, ENS Paris-Saclay.} 
\email{thomas.pierron@ens-paris-saclay.fr}

\begin{document}

\begin{abstract}
In this paper, we revisit the notion of length measures associated to planar closed curves. These are a special case of area measures of hypersurfaces which were introduced early on in the field of convex geometry. The length measure of a curve is a measure on the circle $\Sp^1$ that intuitively represents the length of the portion of curve which tangent vector points in a certain direction. While a planar closed curve is not characterized by its length measure, the fundamental Minkowski-Fenchel-Jessen theorem states that length measures fully characterize convex curves modulo translations, making it a particularly useful tool in the study of geometric properties of convex objects. The present work, that was initially motivated by problems in shape analysis, introduces length measures for the general class of Lipschitz immersed and oriented planar closed curves, and derives some of the basic properties of the length measure map on this class of curves. We then focus specifically on the case of convex shapes and present several new results. First, we prove an isoperimetric characterization of the unique convex curve associated to some length measure given by the Minkowski-Fenchel-Jessen theorem, namely that it maximizes the signed area among all the curves sharing the same length measure. Second, we address the problem of constructing a distance with associated geodesic paths between convex planar curves. For that purpose, we introduce and study a new distance on the space of length measures that corresponds to a constrained variant of the Wasserstein metric of optimal transport, from which we can induce a distance between convex curves. We also propose a primal-dual algorithm to numerically compute those distances and geodesics, and show a few simple simulations to illustrate the approach. 
\vskip2ex
\noindent \textbf{Keywords:} planar curves, length measures, convex domains, isoperimetric inequality, metrics on shape spaces, optimal transport, primal-dual scheme.
\end{abstract}

\maketitle

\section{Introduction}
There is a long history of interactions between geometric analysis and measure theory that goes back to the early 20th century alongside the development of convex geometry \cite{alexandroff1937,fenchel1938} and later on, in the 1960s, with the emergence of a whole new field known as geometric measure theory \cite{Federer} from the works of Federer, Fleming and their students. Since then, ideas from geometric measure theory have found their way into applied mathematics most notably in areas such as computational geometry \cite{Cohen-Steiner2003,abdallah2014reconstruction,buet2018discretization,Hu2019} and shape analysis \cite{hebert1995spherical,Glaunes2004,charon2013varifold,Roussillon2016} with applications in physics, image analysis and reconstruction or computer vision among many others.

A key element explaining the success of measure representations in geometric analysis and processing is their ability to encompass geometric structures of various regularities being either smooth immersed or embedded manifolds or discrete objects such as polytopes, polyhedra or simplicial complexes. They usually provide a comprehensive and efficient framework to capture the most essential geometric features in those objects. One of the earliest and most illuminating example is the use of the concept of \textit{area measures} \cite{alexandroff1937} in convex geometry which has proved instrumental to the theory of mixed volumes and the derivation of a multitude of geometric inequalities, in particular of isoperimetric or Brunn-Minkowski types \cite{gardner2002brunn,schneider_2013}. Area measures are typically defined for convex domains in $\R^n$ through the classical notion of support function of a convex set and can then be interpreted as a measure on the sphere $\Sp^{n-1}$ that represents the distribution of area of the domain's boundary along its different normal directions. A fundamental property of the area measure, that resulted from a series of works by Minkowski, Alexandrov, Fenchel and Jessen, is that there is a one-to-one correspondence between convex sets (modulo translations) and area measures: in particular the area measure characterizes a convex set up to translation and therefore fully encodes its geometry. 

In this paper, we are interested in area measures in the special and simplest case of planar curves. These are more commonly referred to as \textit{length measures} and are finite measures on the unit circle $\Sp^1$ that represents the distribution of length of the curve along its different tangent directions in the plane. Yet, unlike most previous works such as \cite{Letac1983} which are mainly focused on convex objects, one of our purpose is to first introduce and investigate length measures of general rectifiable closed curves in the plane. Our objective is also to provide as elementary and self-contained of an introduction as possible to these notions from the point of view and framework of shape analysis without requiring preliminary concepts from convex geometry. Although, in sharp contrast to the convex case, there are infinitely many rectifiable curves that share the same length measure, we will emphasize some simple geometric properties of the underlying curve that can be recovered or connected to those of its length measure. Furthermore, this approach will allow us to formulate and prove a new characterization of the unique convex curve associated to a fixed measure on $\Sp^1$ given by the Minkowski-Fenchel-Jessen theorem in the form of an isoperimetric inequality. Specifically, we show that this convex curve is the unique maximizer of the signed area among all the rectifiable curves with the same length measure. This extends some previously known results about polygons \cite{Boroczky1986} from the field of discrete geometry.

One of the fundamental problem of shape analysis is the construction of relevant metrics on spaces of shapes such as curves, surfaces, images... which is often the first and crucial step to subsequently extend statistical methods on those highly nonlinear and infinite dimensional spaces. Considering the shape space of convex curves, the one-to-one representation provided by the length measure can be particularly advantageous in that regard. Indeed, various types of generalized measure representations of shapes such as currents \cite{Glaunes2006,durrleman2009statistical}, varifolds \cite{charon2013varifold,Charon2017} or normal cycles \cite{Roussillon2016} have been used in the recent past to define notions of distances between geometric shapes from corresponding metrics on those measure spaces. A common downside of all these frameworks however is that the mapping that associates a shape to its measure is only injective but not surjective. This typically prevents those metrics to be directly associated to geodesics on the shape space without adding more constraints and/or exterior regularization, for example through deformation models. In contrast, the bijectivity of the length measure map from the space of convex curves modulo translations to the space of all measures on $\Sp^1$ satisfying only a simple linear closure constraint hints at the possibility of constructing a geodesic distance based on the length measure. In this paper, we propose an approach that relies on a constrained variant of the Wasserstein metric of optimal transport for probability measures on $\Sp^1$ which, as we show, turns the set of convex curves of length 1 modulo translations into a geodesic space. We also adapt and implement a primal-dual algorithm to numerically estimate the distance and geodesics. 

The paper is organized as follows. In Section \ref{sec:length_measures_Lipschitz}, we define the length measure of a Lipschitz regular closed oriented curve of the plane and provide a few general geometric and approximation properties. In Section \ref{sec:convex_curves}, we discuss specifically the case of convex curves and recap some of the fundamental connections between convex geometry and length measures, in particular the Minkowski-Fenchel-Jessen theorem. Section \ref{sec:isoperimetric_theorem} is dedicated to the statement and proof of an isoperimetric inequality for curves of prescribed length measure, from which we can also find again the classical isoperimetric inequality for planar curves. Section \ref{sec:metric_measure_convex} focuses on the construction of geodesic distances on the space of convex curves and in particular on our proposed constrained Wasserstein distance and its numerical computation. We also present and compare a few examples of estimated geodesics. Finally, Section \ref{sec:future_directions} concludes the paper by discussing some current limitations of this work and potential avenues for future improvements and extensions.

\section{Length measures of Lipschitz closed curves}
\label{sec:length_measures_Lipschitz}
\subsection{Definitions}
In all the paper, we will identify the 2D plane with the space of complex numbers $\C$. We start by introducing the space of closed, immersed oriented Lipschitz parametrized curves in the plane which we define as:
\begin{equation*}
    \mathcal{C} \doteq \{c\in \text{Lip}(\Sp^1,\C) \ | \ c'(\theta) \neq 0 \ for \ a.e. \ \theta \in \Sp^1\}.
\end{equation*}
Depending on the context, we will identify $\Sp^1$ either with the interval $[0,2\pi)\subset \R$ or the circle $\{e^{i\theta} \ | \ \theta \in \R \} \subset \C$. We recall that Lipschitz continuous curves are indeed differentiable almost everywhere and that the derivative is integrable on $\Sp^1$, which implies that the length $L(c) = \int_{\Sp^1} |c'(\theta)| d \theta$ is always finite. Note that we do not assume a priori that curves are simple. In anticipation to what follows, we also introduce the space of unparametrized oriented curves up to translations which we define as the quotient space 
\begin{equation*}
 \tilde{\mathcal{C}} \doteq \mathcal{C}/\sim
\end{equation*}
the equivalence relation being that for $c_1,c_2 \in \mathcal{C}$, $c_1 \sim c_2$ if and only if there exists $z_0 \in \C$ such that $\text{Im}(c_2)=\text{Im}(c_1) + z_0$ and the orientations of $\text{Im}(c_2)$ and $\text{Im}(c_1) + z_0$ coincide. This will constitute the actual space of \textit{shapes}, since objects in $\tilde{\mathcal{C}}$ are essentially planar curves modulo reparametrizations and translations or in other words oriented \textit{rectifiable} closed curves modulo translations in the plane. Furthermore, for any parametrization $c$ of a curve in $\mathcal{C}$ or $\tilde{\mathcal{C}}$, we let $T_c: \Sp^1 \rightarrow \Sp^1$ be the Gauss map, i.e. for all $\theta$, $T_c(\theta) = c'(\theta)/|c'(\theta)|$ which is well-defined for almost all $\theta \in \Sp^1$. Lastly, we will write $ds_c = |c'(\theta)| d \theta$ the arc-length measure of $c$, which is a positive measure on $\Sp^1$. In all this paper, we will denote by $\mathcal{M}^+(\Sp^1)$ the set of all positive finite Radon measures on $\Sp^1$ and $\mathcal{M}(\Sp^1)$ the set of signed finite Radon measures on $\Sp^1$. Also, we recall that the pushforward of a measure $\mu \in \mathcal{M}^{+}(\Sp^1)$ by a mapping $\tau: \Sp^1 \rightarrow \Sp^1$ is the measure denoted $\tau_*\mu$ such that $(\tau_*\mu)(B) = \mu(\tau^{-1}(B))$ for all Borel subset $B \subset \Sp^1$. Finally, we will denote by $\lambda^n$ the Lebesgue measure on $\R^n$. We can now introduce the notion of length measure which is the focus of this work.

\begin{definition}[Length measure of a curve]
\label{def:length_measure}
  For $c \in \mathcal{C}$, we define the length measure of $c$ that we write $\mu_{c}$, as the positive Radon measure on $\Sp^1$ obtained as the pushforward of $ds_c$ by the Gauss map $T_c$ i.e. $\mu_{c} \doteq (T_c)_{*}ds_c$. In other words, for all Borel set $B$ of $\Sp^1$:
  \begin{equation}
  \label{eq:def_length_meas}
      \mu_c(B) = ds_c(T_c^{-1}(B)) = \int_{T_c^{-1}(B)} |c'(\theta)| d \theta.
  \end{equation}
  For any $c_1,c_2$ such that $[c_1]=[c_2]$ in $\tilde{\mathcal{C}}$, one has $\mu_{c_1} = \mu_{c_2}$ leading to the well-defined mapping $M:c \mapsto \mu_c$ from $\tilde{\mathcal{C}}$ to the space of positive measures on $\Sp^1$. 
\end{definition}
The invariance of $\mu_c$ to the equivalence relation $\sim$ in Definition \ref{def:length_measure} is immediate from the fact that the arclength measure and Gauss map both only depend on the geometric image of the curve and are invariant to translations in the plane. In fact, the length measure of $[c] \in \tilde{\mathcal{C}}$ can be also interpreted as the pushforward of the Hausdorff measure $\mathcal{H}^1$ on the plane by the mapping $T: \text{Im}(c) \rightarrow \Sp^1$ such that for any $x \in \text{Im}(c)$, $T(x)$ is the direction of the tangent vector of the curve at $x$. Note however that $\mu_c$ does depend on the orientation of $c$: more specifically, if the orientation is reversed, the associated length measure of the resulting $\check{c}$ is the reflection of $\mu_{c}$ that is $\mu_{\check{c}}(B) = \mu(-B)$ for all Borel set $B$ of $\Sp^1$ (or equivalently if $\Sp^1$ is identified with $[0,2\pi)$, $\mu_{\check{c}}(B) = \mu((B+\pi)\ \text{mod} \ 2\pi)$). Thus $\mu_c$ is a geometric quantity associated to unparametrized oriented curves modulo translations of the plane. For simplicity, we will still write $\mu_c$ to denote the length measure associated to whole equivalence class $[c] \in \tilde{\mathcal{C}}$. 

\begin{remark}
The measure $\mu_c$ can be intuitively understood as the distribution of length along the different directions of tangents to the curve $c$, as we will further illustrate below. We point out that our definition of length measure departs slightly from the traditional concept of length measure (or perimetric measure as it is sometimes called) introduced initially for convex objects by Alexandrov, Fenchel and Jessen \cite{alexandroff1937,fenchel1938}, in that we consider here the direction of unit tangent vectors rather than unit normal vectors to the planar curve. Note that these two definitions only differ by a simple global rotation of the measure by an angle of $\pi/2$ and so have a straightforward relation to one another. The reason for choosing this alternative convention is that this work focuses on length measures of planar curves and not area measures for general hypersurfaces, and our presentation will become a little simpler in this setting.            
\end{remark}

Lastly, thanks to the classical Riesz-Markov-Kakutani representation theorem, we also recall that signed measures on $\Sp^1$ can be alternatively interpreted as elements of the dual to the space of continuous functions on $\Sp^1$. In the case of a length measure, the measure $\mu_c$ acts on any $f \in C(\Sp^1)$ as follows:
\begin{equation}
\label{eq:dual_rep_length_meas}
 (\mu_c | f ) = \int_{\Sp^1} f \left(\frac{c'(\theta)}{|c'(\theta)|} \right) |c'(\theta)| d \theta.
\end{equation}
By straightforward extension, we can also consider the action of $\mu_c$ on continuous complex-valued functions given by the same expression as in \eqref{eq:dual_rep_length_meas}. Before looking into the properties of length measures, we add a final remark on the definition itself.

\begin{remark}
 Length measures can be also connected to some other concept of geometric measure theory namely the 1-varifolds as introduced in \cite{allard1972first} and more precisely the oriented varifold representation of curves which is discussed and analyzed for instance in \cite{Charon2017}. Indeed, the oriented varifold $V_c$ associated to a curve $c \in \mathcal{C}$ is by definition the positive Radon measure on the product space $\mathbb{R}^2 \times \Sp^1$ given for all continuous compactly supported function $\omega$ on $\mathbb{R}^2 \times \Sp^1$ by:
 \begin{equation*}
  (V_c | \omega) \doteq \int_{\Sp^1} \omega\left(c(\theta), \frac{c'(\theta)}{|c'(\theta)|}\right) |c'(\theta)| d \theta.
 \end{equation*}
and $V_c$ is once again independent of the choice of $c$ in the equivalence class $[c]\in \tilde{\mathcal{C}}$. By comparison to \eqref{eq:dual_rep_length_meas}, the 1-varifold $V_c$ can be thought as a spatially localized version of the length measure $\mu_c$, that is a distribution of unit directions at different positions in the plane. Equivalently, the length measure is obtained by marginalizing $V_c$ with respect to its spatial component. This loss in spatial localization explains the lack of injectivity of the length measure representation (even after quotienting out translations) that is discussed below.     
\end{remark}

\subsection{Basic geometric properties}
Let us now examine more closely the most immediate properties of the length measure, in particular how it relates to various geometric quantities and transformations of the underlying curve. For a general smooth mapping $\phi: \C \rightarrow \C$, we will write $\phi \cdot c$ the curve $\theta \in \Sp^1 \mapsto \phi(c(\theta))$. 
\begin{prop}
\label{prop:basic_properties_mu}
 Let $c \in \mathcal{C}$ and $\mu_c$ its length measure. Then
 \begin{enumerate}
  \item For all $\theta_1,\theta_2 \in \Sp^1 = [0,2\pi)$:
\begin{equation*}
    \mu_c([\theta_1,\theta_2]) = \text{Length}\left(\left\{c(\theta) \ | \ \theta_1 \leq \text{angle}(c'(\theta))\leq \theta_2\right\}\right).
\end{equation*}
In particular, the total length of $c$ is $L(c) = \mu_c(\Sp^1)$.
  \item For any rotation $R_\theta \in SO(2)$ of angle $\theta \in \Sp^{1}$, $\mu_{R_\theta \cdot c} = (R_\theta)_* \mu_{c}$ namely for all $B\subset \Sp^1$, $\mu_{R_\theta \cdot c}(B) = \mu_c((B-\theta)\ \text{mod} \ 2\pi)$.
  \item For any $\lambda >0$, $\mu_{\lambda c} = \lambda \mu_{c}$, where $\lambda c$ denotes the rescaling of $c$ by a factor $\lambda$.
 \end{enumerate}
\end{prop}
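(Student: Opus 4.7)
The plan is to treat each of the three items as a direct unpacking of the pushforward definition \eqref{eq:def_length_meas}, using the fact that both the arclength measure $ds_c$ and the Gauss map $T_c$ transform in a controlled way under the operations considered. No deep tools are required; the work is bookkeeping, and the only minor subtlety will be making sure that the change of variables in the base circle $\Sp^1$ (the parameter domain) and the change of variables in the target circle $\Sp^1$ (the space of tangent directions) are kept distinct.

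For item (1), I would note that by definition $T_c^{-1}([\theta_1,\theta_2])$ is exactly the set of parameters $\theta$ with $\text{angle}(c'(\theta))\in [\theta_1,\theta_2]$, so that
\begin{equation*}
\mu_c([\theta_1,\theta_2]) \;=\; \int_{T_c^{-1}([\theta_1,\theta_2])} |c'(\theta)|\,d\theta
\end{equation*}
is the arclength integral over the parameters whose tangent direction lies in $[\theta_1,\theta_2]$, which is precisely the length of the corresponding subset of $\mathrm{Im}(c)$ counted with multiplicity. The formula $L(c)=\mu_c(\Sp^1)$ is then just the case $[\theta_1,\theta_2]=\Sp^1$ combined with the defining expression $L(c)=\int_{\Sp^1}|c'(\theta)|\,d\theta$.

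For item (2), I would use the chain rule to write $(R_\theta\cdot c)'(\phi)=R_\theta\,c'(\phi)$. Since $R_\theta$ is an isometry, $|(R_\theta\cdot c)'(\phi)|=|c'(\phi)|$ for a.e.\ $\phi$, hence $ds_{R_\theta\cdot c}=ds_c$. Moreover the Gauss map satisfies $T_{R_\theta\cdot c}(\phi)=R_\theta\, T_c(\phi)$, so $T_{R_\theta\cdot c}^{-1}(B)=T_c^{-1}(R_{-\theta}B)$ for any Borel $B\subset\Sp^1$. Substituting into \eqref{eq:def_length_meas} gives
\begin{equation*}
\mu_{R_\theta\cdot c}(B)=\int_{T_c^{-1}(R_{-\theta}B)}|c'(\phi)|\,d\phi=\mu_c(R_{-\theta}B)=((R_\theta)_*\mu_c)(B),
\end{equation*}
which is the claimed identity. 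The only point to be careful about is distinguishing the rotation acting on $\mathrm{Im}(c)\subset \C$ from the induced rotation on the space $\Sp^1$ of tangent directions; they are literally the same map since $R_\theta\in SO(2)$ commutes with normalization.

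For item (3), the chain rule gives $(\lambda c)'(\phi)=\lambda\,c'(\phi)$, so $|(\lambda c)'(\phi)|=\lambda |c'(\phi)|$ (using $\lambda>0$) while $T_{\lambda c}(\phi)=T_c(\phi)$ because normalization cancels the positive scalar. Hence $T_{\lambda c}^{-1}(B)=T_c^{-1}(B)$ and
\begin{equation*}
\mu_{\lambda c}(B)=\int_{T_c^{-1}(B)}\lambda\,|c'(\phi)|\,d\phi=\lambda\,\mu_c(B).
\end{equation*}
I do not anticipate a genuine obstacle; if anything, the most delicate point is to ensure that the almost-everywhere definedness of $T_c$ does not cause issues in the pushforward formula, but since the integrand $|c'(\phi)|$ vanishes on the negligible set where $T_c$ is undefined, this causes no problem.
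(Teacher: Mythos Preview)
Your proposal is correct and is exactly the kind of direct verification from the definition \eqref{eq:def_length_meas} that the statement calls for. The paper itself does not supply a proof for this proposition, treating all three items as immediate consequences of the pushforward definition; your write-up simply makes explicit the bookkeeping the paper leaves to the reader.
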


Note that property (1) above implies in particular that the length measures of curves of length one are probability measures on $\Sp^1$. Combined with property (3) on the action of scalings, one could further define a mapping from $\tilde{\mathcal{C}}/\text{Scal}$, the space of curves modulo translation and scaling, into the space of probability measures on $\Sp^1$ by essentially renormalizing curves to have length one. In all cases, by identifying $\Sp^1$ with $[0,2\pi)$, a convenient way to represent and visualize $\mu_c$ is through its \textit{cumulative distribution function} (cdf): 
\begin{align*}
    F_{\mu_c}:[0,2\pi) &\longrightarrow [0,L_c) \\
              \theta &\longmapsto \mu_c([0,\theta])
\end{align*}
which is always a non-decreasing and right-continuous function.  

\begin{figure}
    \begin{tabular}{ccc}
    \includegraphics[width=5cm]{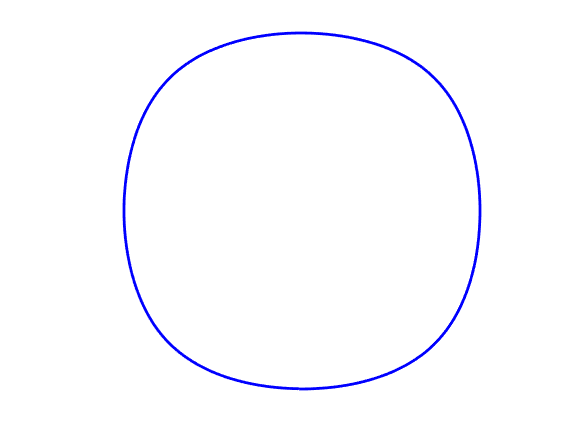} 
    &\includegraphics[width=5cm]{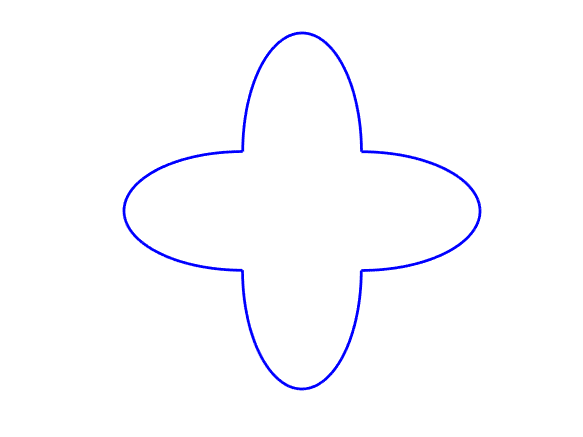} 
    &\includegraphics[width=5cm]{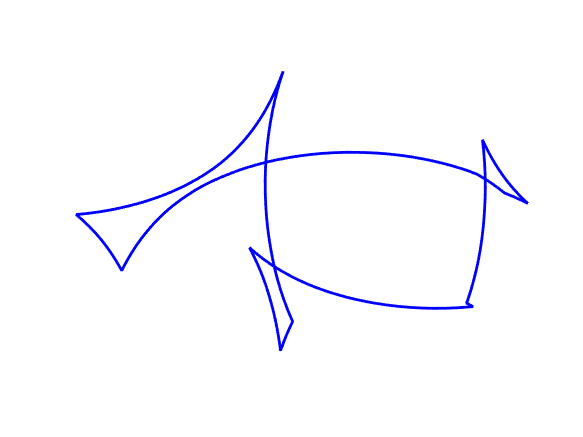} 
    \end{tabular}
 \includegraphics[width=5cm]{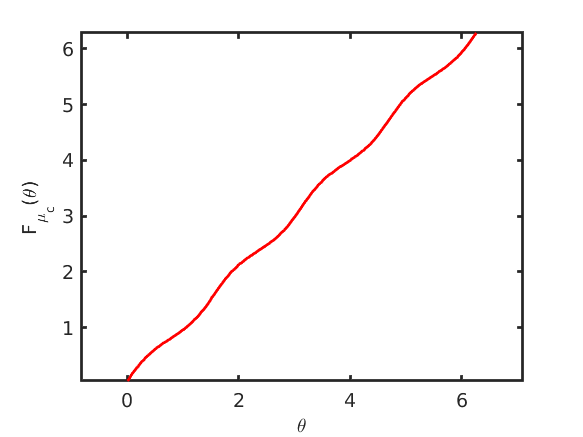} 
    \caption{Three planar curves with the same length measure $\mu_c$ which c.d.f is plotted on the second row.}
    \label{fig:smooth_curves}
\end{figure}

There are some further constraints that length measures must satisfy. Most notably, since the curve $c$ is closed, we obtain from \eqref{eq:dual_rep_length_meas} that:
\begin{align*}
   \left(\mu_c | e^{i\theta} \right) = \int_{\Sp^1} e^{i\theta} d\mu_c(\theta) = \int_{\Sp^1} e^{i\text{angle}(c'(\theta))} |c'(\theta)| d\theta = \int_{\Sp^1} c'(\theta) d\theta = 0. 
\end{align*}
In other words, all length measures are such that the expectation of $e^{i\theta}$ on $\Sp^1$ vanishes. Together with the above, this leads us to introduce the following subset of $\mathcal{M}^+(\Sp^1)$:
\begin{definition}
\label{def:length_meas_map}
 We denote by $\mathcal{M}_0^+(\Sp^1)$ (resp. $\mathcal{M}_0(\Sp^1)$) the space of all positive (resp. signed) Radon measures $\mu$ on $\Sp^1$ which are such that:
 \begin{equation*}
  \int_{\Sp^1} e^{i\theta} d\mu(\theta) = 0 \Leftrightarrow \ \int_{\Sp^1} \cos(\theta) d\mu(\theta) = \int_{\Sp^1} \sin(\theta) d\mu(\theta) =0.
 \end{equation*}
 We then define the mapping $M: \ \tilde{\mathcal{C}} \rightarrow \mathcal{M}^+_0$ by $M([c]) = \mu_c$ which does not depend on the choice of $c$ in the equivalence class $[c]$. 
\end{definition}
Again, we will often replace an element of the quotient $[c]\in \tilde{\mathcal{C}}$ by one of its representant $c \in \mathcal{C}$ and then write $M(c)$ instead of $M([c])$. 

\begin{remark}
\label{rem:symmetrization}
Note that any positive measure $\mu$ on $\Sp^1$ such that $\mu(-B)=\mu(B)$ for all Borel subsets of $\Sp^1$ belongs to $\mathcal{M}^+_0$. Indeed the assumption implies that:
\begin{equation*}
    \int_0^{2\pi} e^{i\theta} d\mu(\theta) = \int_0^{\pi} e^{i\theta} d\mu(\theta) + \int_{0+\pi}^{\pi+\pi} e^{i\theta} d\mu(\theta) = \int_0^{\pi} e^{i\theta} d\mu(\theta) - \int_0^{\pi} e^{i\theta} d\mu(\theta) =0
\end{equation*}
where the third equality follows from the change of variable $\theta \leftarrow \theta +\pi$ and the fact that $d\mu(\theta+\pi)=d\mu(\theta)$ by assumption. As a consequence, given any positive measure $\mu$ on $\Sp^1$, one can always symmetrize it by defining $\bar{\mu}(B) = \mu(B) + \mu(-B)$ and obtain a measure of $\mathcal{M}^+_0$. As a side note, convex curves for which the length measure satisfy $\mu_c(-B)=\mu_c(B)$ are called central-symmetric and are an important class of objects in convex geometry.  
\end{remark}

It is obvious that the mapping $M$ in Definition \ref{def:length_meas_map} cannot be injective on $\tilde{\mathcal{C}}$. In fact, given $c \in \tilde{\mathcal{C}}$, there are infinitely many other curves that share the same length measure as $c$. In Figure \ref{fig:smooth_curves}, we show several examples of curves having the same length measure which c.d.f is plotted underneath. On the other hand, $M$ is surjective as we shall see in the next section. For now, we will just illustrate the different types of length measures depending on he nature of the underlying curve with a few examples.

\begin{figure}
    \begin{tabular}{cc}
    \includegraphics[width=6cm]{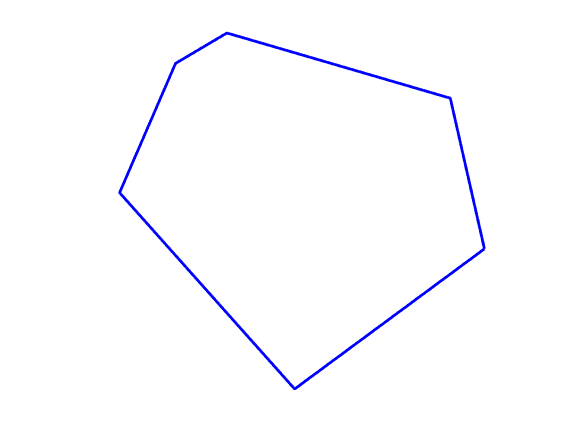} 
    &\includegraphics[width=6cm]{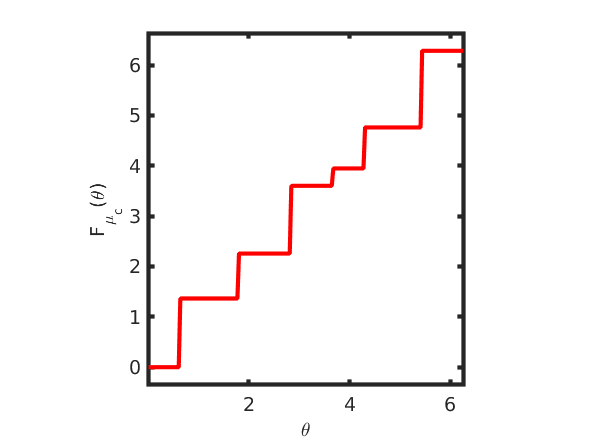} \\
    \includegraphics[width=6cm]{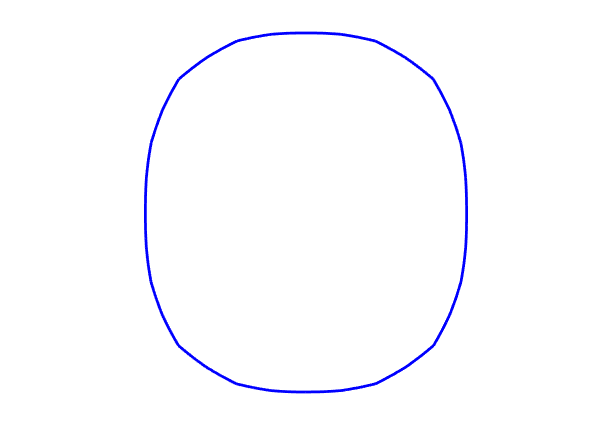} 
    &\includegraphics[width=6cm]{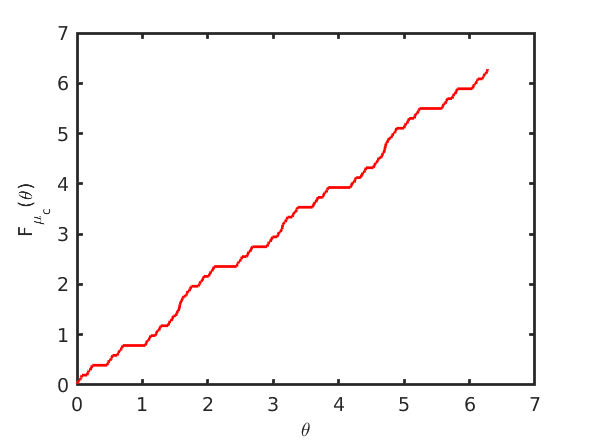}    
    \end{tabular}
    \caption{Examples of closed curves (left) with their corresponding c.d.f (right).}
    \label{fig:polygon_mu}
\end{figure}

\begin{example}
Assume that $c(\theta) = e^{i\theta}$ for $\theta \in [0,2\pi)$ is the unit circle in $\C$. Then $ ds_c(\theta) = d\theta$ and $T_c(\theta) = \theta$ thus $\mu_c = d\theta$ is the uniform measure on $\Sp^1$. 
\end{example}

\begin{example}
\label{ex:poylgon}
Consider a closed polygon with $n$ vertices $z_1,z_2,\ldots,z_n$ and the $n$ faces $[z_1,z_2],[z_2,z_3],$ $\ldots,[z_n,z_0]$. For all $j=1,\ldots,n$, let $\alpha_j= \text{angle}(z_{j+1}-z_j)$, $l_j= |z_{j+1}-z_j|$ (by convention $z_{n+1}=z_0$) and $L=\sum_{j=1}^{n}l_j$. Then, one can construct a parametrization $c \in \mathcal{C}$ of the polygon as follows. We define $\theta_j=2\pi (j-1)/n$ for $j=1,\ldots,n+1$. Then, on $[\theta_j,\theta_{j+1}]$, $j=1,\ldots,n$, take $c(\theta)\doteq (\theta-\theta_j)\frac{n-1}{2\pi}l_j e^{i\alpha_j}$. It follows that $c'(\theta) = \frac{n-1}{2\pi}l_j e^{i\alpha_j}$ for $\theta \in [\theta_j,\theta_{j+1}]$ and one can easily check that it leads to $\mu_c = \sum_{j=1}^{n} l_j \delta_{e^{i\alpha_j}}$. The length measure of a polygon is thus always a sum of Dirac masses, c.f Figure \ref{fig:polygon_mu} for an illustration.
\end{example}

\begin{example}
Lastly, we consider an example of a singular continuous length measure. To construct it, let's introduce the standard Cantor distribution that we rescale to the interval $[0,2\pi]$ and denote it by $\sigma$. Its support is the Cantor set and its cumulative distribution function $F_{\sigma}$ is the well-known devil's staircase function on $[0,2\pi]$. The measure $\sigma$ is not in $\mathcal{M}^+_0$ but following Remark \ref{rem:symmetrization}, we can consider instead its symmetrization $\bar{\sigma} \in \mathcal{M}_0$ which c.d.f is given by $F_{\bar{\sigma}}(\theta) = F_{\sigma}(\theta) + \sigma(([0,\theta] + \pi) \ \text{mod} \ 2\pi)$. Now, letting $F_{\bar{\sigma}}^{(-1)}$ being the pseudo-inverse of $F_{\bar{\sigma}}$, such that for all $\theta \in [0,2\pi]$ by $F_{\bar{\sigma}}^{(-1)}(\theta) = \inf\{\theta' \ | \  F_{\bar{\sigma}}(\theta')\geq \theta\}$, we define the curve $c:[0,2\pi)\rightarrow \C$:
\begin{equation*}
    c(\theta) = \int_0^{\theta} e^{i F_{\bar{\sigma}}^{(-1)}(\theta')} d\theta' 
\end{equation*}
which is $C^1$ and satisfies $|c'(\theta)|=1$ and $T_c(\theta) = e^{i F_{\bar{\sigma}}^{(-1)}(\theta)}$. As we will show more in details and in general in the proof of Theorem \ref{thm:tangent_length_map}, it follows that we have $F_{\mu_c} = F_{\bar{\sigma}}$ and therefore $\mu_c = \bar{\sigma}$ is the above symmetrized Cantor distribution $\bar{\sigma}$. We simulated such a curve using an approximation at level 12 of the Cantor measure, which is shown in Figure \ref{fig:polygon_mu}. 
\end{example}

Those examples show that length measures of curves in $\mathcal{C}$ may have density with respect to the Lebesgue measure on $\Sp^1$, be singular discrete measures but also singular continuous measures as the last example shows. Furthermore, we have the following connection between the length measure density of a sufficiently regular curve and its curvature:

\begin{prop}
\label{prop:length_measure_dens}
If $c \in \mathcal{C}$ is in addition twice differentiable with bounded and a.e non-vanishing curvature then $\mu_c = \rho(\theta) d \theta$ where the density $\rho(\theta)$ is given for a.e $\theta \in \Sp^1$ by:
\begin{equation}
\label{eq:density_curvature}
    \rho(\theta) = \sum_{u \in T_c^{-1}(\{\theta\})} \frac{1}{\kappa_c(u)} 
\end{equation}
where $\kappa_c$ is the curvature of $c$.
\end{prop}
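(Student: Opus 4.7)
The plan is to derive the density formula as a direct application of the coarea formula to the Gauss map $T_c : \Sp^1 \to \Sp^1$. Since length measures are invariant under reparametrization, I would first reduce to arc-length parametrization, i.e. assume without loss of generality that $|c'(\theta)| = 1$ a.e. Under the regularity assumption on $c$, one then has $T_c(\theta) = c'(\theta)$ and the derivative $T_c'(\theta) = c''(\theta)$ satisfies $|T_c'(\theta)| = |\kappa_c(\theta)|$. The hypothesis that the curvature is bounded makes $T_c$ Lipschitz, and the hypothesis that it is a.e. non-vanishing ensures that $T_c$ has non-zero differential at almost every point of $\Sp^1$.

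Next, the classical coarea formula for Lipschitz maps between one-dimensional manifolds gives, for any non-negative Borel function $g$ on $\Sp^1$,
\begin{equation*}
 \int_{\Sp^1} g(\theta)\, |T_c'(\theta)|\, d\theta \;=\; \int_{\Sp^1} \Bigl( \sum_{u \in T_c^{-1}(\{\theta\})} g(u) \Bigr)\, d\theta.
\end{equation*}
For an arbitrary continuous $f$ on $\Sp^1$, I would specialize this identity to $g(\theta) = f(T_c(\theta))/|T_c'(\theta)|$, which is well-defined almost everywhere thanks to the non-vanishing curvature hypothesis. The left-hand side then simplifies to $\int_{\Sp^1} f(T_c(\theta))\, d\theta = (\mu_c \,|\, f)$ by the arc-length form of \eqref{eq:dual_rep_length_meas}, while the right-hand side reads $\int_{\Sp^1} f(\theta)\, \rho(\theta)\, d\theta$ with $\rho$ defined by \eqref{eq:density_curvature}. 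By the Riesz--Markov representation theorem, this identifies $\mu_c$ with $\rho(\theta)\, d\theta$.

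The main obstacle is not any single hard step, but rather the careful justification of the coarea formula in the present regularity class: $C^2$-regularity combined with bounded curvature is what makes $T_c$ Lipschitz, while a.e. non-vanishing curvature is what allows division by $|T_c'|$ to be interpreted almost everywhere. Under these two conditions the generalized change-of-variables formula from geometric measure theory applies directly. Once the identity is derived, taking $f \equiv 1$ yields $\int \rho\, d\theta = L(c) < \infty$, which ensures a posteriori that $\rho$ is finite and $T_c^{-1}(\{\theta\})$ is countable for Lebesgue-a.e. $\theta$. One minor notational point: the proposition writes $1/\kappa_c(u)$ without absolute value, but since the density must be non-negative this should be read as $1/|\kappa_c(u)|$, in agreement with the identification $|T_c'|=|\kappa_c|$ used above.
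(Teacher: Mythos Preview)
Your proposal is correct and follows essentially the same route as the paper: both arguments hinge on applying the coarea formula to the Gauss map $T_c$, using that $|T_c'|=|c'|\,|\kappa_c|$ (which becomes $|T_c'|=|\kappa_c|$ after your arc-length reduction), and then reading off the density from the resulting identity on test functions. The only cosmetic difference is that you first pass to arc-length parametrization whereas the paper keeps a general parametrization and uses $T_c'(\theta)=|c'(\theta)|\kappa_c(\theta)$ directly; neither choice affects the substance of the argument. Your remark about the missing absolute value is well taken: the coarea formula involves $|T_c'|$, so the density should indeed read $\sum_{u}1/|\kappa_c(u)|$ in general, and the paper's version without absolute value is strictly correct only when the curvature has constant sign.
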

\begin{proof}
First, we notice that $T_c'(\theta) = |c'(\theta)| \frac{d}{ds_c} T_c(\theta) = |c'(\theta)| \kappa(\theta)$. Now, let $f$ be a continuous test function $\Sp^1 \rightarrow \R$. By definition, we have:
\begin{align*}
  (\mu_c | f) = \int_{\Sp^1} f \circ T_c(\theta) |c'(\theta)| d\theta  = \int_{\Sp^1} f \circ T_c(\theta) \frac{1}{\kappa(\theta)} T_c'(\theta) d\theta.
\end{align*}
Since $T_c$ is a Lipschitz function, by the coarea formula (\cite{Ambrosio2000} Theorem 2.93), the above integral can be rewritten as:
\begin{equation*}
    (\mu_c | f) = \int_{\Sp^1} \left(\int_{u \in T_c^{-1}(\{\theta\})} f(\theta) \frac{1}{\kappa(u)} d\mathcal{H}^0(u) \right) d\theta = \int_{\Sp^1} \left(\int_{u \in T_c^{-1}(\{\theta\})} \frac{1}{\kappa(u)} d\mathcal{H}^0(u) \right) f(\theta) d\theta. 
\end{equation*}
and furthermore for almost all $\theta \in \Sp^1$, $T_c^{-1}(\{\theta\})$ is a finite set so we obtain:
\begin{equation*}
    (\mu_c | f) = \int_{\Sp^1} \underbrace{\left(\sum_{u \in T_c^{-1}(\{\theta\})} \frac{1}{\kappa_c(u)}\right)}_{=\rho(\theta)} f(\theta) d\theta .
\end{equation*}
\end{proof}
A consequence is that the c.d.f of the length measure of a curve satisfying the assumptions of Proposition \ref{prop:length_measure_dens} is given by $F_{\mu_c}(\theta) = \int_0^\theta \rho(\theta') d\theta'$ and thus does not have any jumps. Such jumps can only occur with the presence of flat faces in the curve, in particular for polygons as in Example \ref{ex:poylgon}. Note also that if the curve is smooth and strictly convex, the curvature is non-vanishing everywhere and the mapping $T_c$ is a bijection which implies in this case that $\rho(\theta) = \frac{1}{\kappa_c(T_c^{-1}(\theta))}$.

\subsection{Convergence of length measures}
It will be useful for the rest of the paper to examine the topological properties of the mapping $M:c \mapsto \mu_c$. Specifically, we want to determine under what notion of convergence in the space of curves, we can recover convergence of the corresponding length measures. We remind that a sequence of measures $\mu_n \in \mathcal{M}(\Sp^1)$ is said to converge weakly to $\mu \in \mathcal{M}(\Sp^1)$, which will be written $\mu_n \rightharpoonup \mu$, when for all $f \in C(\Sp^1)$, $(\mu_n | f) \rightarrow (\mu | f)$ as $n \rightarrow +\infty$. 

First, it is clear that uniform convergence $c_n \rightarrow c$ in $\mathcal{C}$ (or equivalently convergence in Hausdorff distance of the unparametrized curves in $\tilde{C}$) does not imply that $\mu_{c_n}$ converges to $\mu_c$ even weakly. Indeed it suffices to consider a sequence of staircase curves as the one displayed in Figure \ref{fig:steps_nonconv} which converges in Hausdorff distance to the diamond curve in blue; however, for all $n$, the length measure is identical and equal to $\mu_{c_n} = 2(\delta_{0} + \delta_{\pi/2} + \delta_{\pi} + \delta_{3\pi/2})$ whereas $\mu_c = \sqrt{2}(\delta_{\pi/4} + \delta_{-\pi/4} + \delta_{3\pi/4} + \delta_{5\pi/4})$. 

\begin{figure}
    \includegraphics[width=7cm]{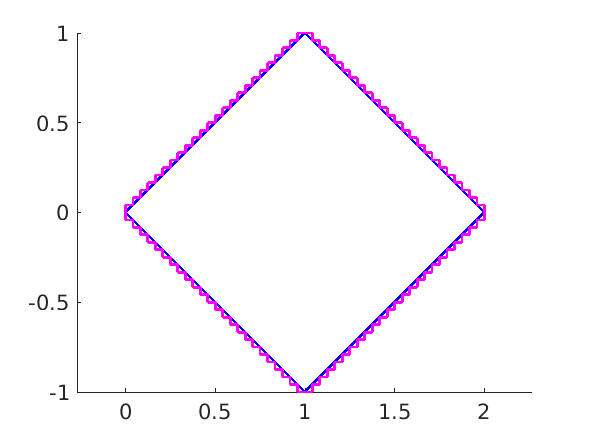} 
    \caption{Two curves close in Hausdorff distance but for which the length measures are respectively $\sqrt{2}(\delta_{\pi/4} + \delta_{-\pi/4} + \delta_{3\pi/4} + \delta_{5\pi/4})$ (blue curve) and $2(\delta_{0} + \delta_{\pi/2} + \delta_{\pi} + \delta_{3\pi/2})$ (red curve).}
    \label{fig:steps_nonconv}
\end{figure}

However, assuming in addition some convergence of the derivatives, we have the following:
\begin{prop}
\label{prop:convergence_length_meas}
Let $(c_n)$ be a sequence of $\mathcal{C}$ with uniformly bounded Lipschitz constant such that there exists $c \in \mathcal{C}$ for which $c_n'(\theta) \rightarrow c'(\theta)$ for almost every $\theta \in \Sp^1$. Then $(\mu_{c_n})$ converges weakly to $\mu_c$ and for all Borel set $B \subset \Sp^1$ with $\mu_c(\partial B) =0$ one has $\mu_{c_n}(B) \rightarrow \mu_{c}(B)$.
\end{prop}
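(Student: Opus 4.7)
The plan is to reduce the proposition to dominated convergence applied to the dual-pairing formula \eqref{eq:dual_rep_length_meas}, and then invoke the Portmanteau theorem for the second assertion.

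First I would fix a continuous test function $f \in C(\Sp^1)$ and write, using \eqref{eq:dual_rep_length_meas},
\begin{equation*}
  (\mu_{c_n} \mid f) \;=\; \int_{\Sp^1} \Phi_f(c_n'(\theta))\, d\theta, \qquad \Phi_f(z) \doteq f\!\left(\tfrac{z}{|z|}\right)|z|,
\end{equation*}
and similarly for $c$. The key observation is that $\Phi_f:\C\setminus\{0\}\to \C$ is continuous (as the product of the bounded continuous map $z\mapsto f(z/|z|)$ on $\C\setminus\{0\}$ and $z\mapsto |z|$). Since $c\in\mathcal{C}$, we have $c'(\theta)\neq 0$ for almost every $\theta$, and since $c_n'(\theta)\to c'(\theta)$ almost everywhere by hypothesis, it follows that $\Phi_f(c_n'(\theta))\to \Phi_f(c'(\theta))$ for almost every $\theta\in\Sp^1$.

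Next I would provide a uniform integrable dominating function. Let $K$ be a uniform Lipschitz bound so that $|c_n'(\theta)|\leq K$ for a.e.\ $\theta$ and every $n$. Then
\begin{equation*}
  |\Phi_f(c_n'(\theta))| \;\leq\; \|f\|_\infty\, |c_n'(\theta)| \;\leq\; \|f\|_\infty K,
\end{equation*}
which is integrable over $\Sp^1$. The Lebesgue dominated convergence theorem then gives $(\mu_{c_n}\mid f)\to (\mu_c \mid f)$, establishing the weak convergence $\mu_{c_n}\rightharpoonup \mu_c$.

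For the second assertion, I would apply the classical Portmanteau theorem for finite positive Radon measures on a compact metric space: since $\mu_{c_n}\rightharpoonup \mu_c$ and, by specializing the previous step to $f\equiv 1$, the total masses also converge ($\mu_{c_n}(\Sp^1)=L(c_n)\to L(c)=\mu_c(\Sp^1)$), weak convergence implies $\mu_{c_n}(B)\to \mu_c(B)$ for every Borel set $B$ with $\mu_c(\partial B)=0$. The main delicate point in the argument is really confined to step one: one must be careful that the normalization $z\mapsto z/|z|$ in $\Phi_f$ is discontinuous at the origin, but this is harmlessly absorbed by the fact that the curve $c$ is immersed almost everywhere, so the bad set $\{\theta:c'(\theta)=0\}$ has Lebesgue measure zero and does not affect the pointwise a.e.\ convergence of the integrand.
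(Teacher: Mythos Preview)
Your proposal is correct and follows essentially the same approach as the paper: both write the pairing $(\mu_{c_n}\mid f)$ via \eqref{eq:dual_rep_length_meas}, use continuity of $z\mapsto f(z/|z|)|z|$ away from $0$ together with $c'(\theta)\neq 0$ a.e.\ to get pointwise convergence of the integrand, dominate by $\|f\|_\infty$ times the uniform Lipschitz bound, and conclude by dominated convergence; the second assertion is in both cases a direct appeal to the Portmanteau-type consequence of weak convergence (the paper cites \cite{Evans2015} Theorem 1.40, while you spell out the total-mass convergence explicitly).
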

\begin{proof}
Let $f$ be a continuous real-valued function on $\Sp^1$. 
\begin{equation*}
    (\mu_{c_n} | f) = \int_{\Sp^1} f \circ T_{c_n}(\theta) |c_n'(\theta)| d\theta = \int_{\Sp^1} f\left(\frac{c_n'(\theta)}{|c_n'(\theta)|} \right) |c_n'(\theta)| d\theta. 
\end{equation*}
By the assumptions above, outside a set of measure $0$ in $\Sp^1$, we have $c_n'(\theta) \rightarrow c'(\theta)$ with $c'(\theta)\neq 0$ and $c_n'(\theta) \neq 0$ for all $n$. Thus for almost all $\theta$ in $\Sp^1$, $c_n'(\theta)/|c_n'(\theta)| \rightarrow c'(\theta)/|c'(\theta)|$ and thus 
\begin{equation*}
   f\left(\frac{c_n'(\theta)}{|c_n'(\theta)|} \right) |c_n'(\theta)| \xrightarrow[n \rightarrow +\infty]{} f\left(\frac{c'(\theta)}{|c'(\theta)|} \right) |c'(\theta)| 
\end{equation*}
since $f$ is continuous. Furthermore, as $f$ is bounded, there exists $\alpha>0$, such that $f(c_n'(\theta)/|c_n'(\theta)|)\leq \alpha$ for all $n$ and by assumption, there also exists $\beta>0$ such that $|c_n'(\theta)|\leq \beta$. It results from Lebesgue dominated convergence theorem that:
\begin{equation*}
    (\mu_{c_n} | f) \xrightarrow[n \rightarrow +\infty]{} \int_{\Sp^1} f\left(\frac{c'(\theta)}{|c'(\theta)|} \right) |c'(\theta)| d\theta = (\mu_{c} | f).
\end{equation*}
and so $\mu_{c_n} \rightharpoonup \mu_{c}$. The second statement is a classical consequence of weak convergence of Radon measures (see \cite{Evans2015} Theorem 1.40).
\end{proof}
Another important question is whether polygonal approximation of a curve leads to consistent length measures which we answer in the particular case of piecewise smooth curves:
\begin{prop}
\label{prop:convergence_polygons}
Let $c$ be a curve of $\mathcal{C}$ which is assumed in addition to be piecewise twice differentiable with bounded second derivative. If $(c_n)$ is a sequence of polygonal curves with $k_n$ vertices given by $c(\theta_{n,0}),c(\theta_{n,1}),\ldots,c(\theta_{n,k_n})$ with $0=\theta_{n,0} < \theta_{n,1} < \ldots < \theta_{n,k_n}=2\pi$ and $\max_{i} \{\theta_{i+1,n} - \theta_{i,n}\} \rightarrow 0$ as $n\rightarrow +\infty$, then $(\mu_{c_n})$ converges weakly to $\mu_c$.    
\end{prop}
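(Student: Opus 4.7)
My strategy is to reduce to Proposition \ref{prop:convergence_length_meas} by choosing a convenient parametrization of each polygon. Since $\mu_{c_n}$ only depends on the class $[c_n] \in \tilde{\mathcal{C}}$, I replace the parametrization from Example \ref{ex:poylgon} by the piecewise linear one defined on the very grid $\{\theta_{n,i}\}$ of vertices:
\begin{equation*}
  \tilde c_n(\theta) \doteq c(\theta_{n,i}) + \frac{\theta - \theta_{n,i}}{\theta_{n,i+1}-\theta_{n,i}}\bigl(c(\theta_{n,i+1}) - c(\theta_{n,i})\bigr), \qquad \theta \in [\theta_{n,i},\theta_{n,i+1}].
\end{equation*}
Then $\mu_{c_n} = \mu_{\tilde c_n}$ and, on each open subinterval, $\tilde c_n'(\theta) = (c(\theta_{n,i+1})-c(\theta_{n,i}))/(\theta_{n,i+1}-\theta_{n,i})$, which equals the average of $c'$ on that subinterval.

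The first step is to establish the pointwise a.e.\ convergence $\tilde c_n'(\theta) \to c'(\theta)$. Let $\Theta \subset \Sp^1$ be the finite set of points outside of which $c$ is twice differentiable; set $\delta_n \doteq \max_i (\theta_{n,i+1}-\theta_{n,i})$ so that $\delta_n \to 0$ by assumption. For any fixed $\theta \in \Sp^1 \setminus \Theta$ and $n$ large enough, the subinterval $[\theta_{n,i},\theta_{n,i+1}]$ containing $\theta$ is entirely contained in a smooth piece. A Taylor expansion around $\theta$ together with the uniform bound on $c''$ then gives
\begin{equation*}
  c(\theta_{n,i+1}) - c(\theta_{n,i}) = c'(\theta)(\theta_{n,i+1}-\theta_{n,i}) + O(\delta_n^2),
\end{equation*}
so $\tilde c_n'(\theta) = c'(\theta) + O(\delta_n) \xrightarrow[n\to\infty]{} c'(\theta)$.

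Secondly, since each slope is an average of $c'$ over a subinterval, one has $|\tilde c_n'(\theta)| \leq \mathrm{Lip}(c)$ for all $n$ and all $\theta$ outside of the finitely many nodes, so the Lipschitz constants of $\tilde c_n$ are uniformly bounded. Provided also that $\tilde c_n \in \mathcal{C}$, Proposition \ref{prop:convergence_length_meas} applies and yields $\mu_{\tilde c_n} \rightharpoonup \mu_c$, whence the proposition.

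The main obstacle I anticipate is verifying that $\tilde c_n \in \mathcal{C}$, i.e.\ that $\tilde c_n'$ is nonzero almost everywhere, which amounts to asking that consecutive vertices $c(\theta_{n,i}), c(\theta_{n,i+1})$ are distinct for $n$ large. On a subinterval contained in a smooth piece of $c$, the continuity and non-vanishing of $c'$ together with Taylor give $|c(\theta_{n,i+1})-c(\theta_{n,i})| \geq \tfrac12|c'(\theta_*)|(\theta_{n,i+1}-\theta_{n,i}) > 0$ for any $\theta_*$ inside the subinterval once $n$ is large enough; the finitely many exceptional subintervals straddling a point of $\Theta$ only contribute a total arc length $O(\delta_n)$, and any coincident consecutive vertices can simply be deleted from the vertex list, which does not alter $[c_n]\in \tilde{\mathcal{C}}$ nor its length measure. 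This handles the only subtle point and completes the reduction.
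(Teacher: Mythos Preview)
Your proposal is correct and follows essentially the same approach as the paper: reparametrize the polygons piecewise linearly on the grid $\{\theta_{n,i}\}$, use Taylor and the bound on $c''$ to get pointwise a.e.\ convergence of the derivatives, bound the Lipschitz constants uniformly, and invoke Proposition~\ref{prop:convergence_length_meas}. Your treatment is in fact slightly more careful than the paper's: you obtain the Lipschitz bound directly from the averaging observation (giving $\mathrm{Lip}(c)$ rather than $\|c''\|_{L^\infty}\delta_n + \mathrm{Lip}(c)$), and you explicitly address the immersion condition $\tilde c_n' \neq 0$ a.e., which the paper's proof passes over in silence.
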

\begin{proof}
Using Proposition \ref{prop:convergence_length_meas}, we simply need to show that $c_n'$ converges pointwise to $c'$ a.e and bound the Lipschitz constant of $c_n$ uniformly. As $c$ is piecewise smooth, we can treat each of the intervals separately and fix $\theta \in \Sp^1$ with $c$ being twice differentiable at $\theta$. For $n\geq 1$, let us denote $h_n = \max_{i} \{\theta_{i+1,n} - \theta_{i,n}\}$. For $n$ large enough,  let $\theta_{j,n} \leq \theta < \theta_{j+1,n}$ with $c$ twice differentiable on $[\theta_{j,n},\theta_{j+1,n})$. Then we have by definition $c_n'(\theta) = \frac{c(\theta_{j+1,n}) - c(\theta_{j,n})}{\theta_{j+1,n}-\theta_{j,n}} = c'(\xi)$ for a certain $\xi \in (\theta_{j,n},\theta_{j+1,n})$ by Taylor's theorem which, using the boundedness of the second derivative of $c$, gives
\begin{equation*}
    |c_n'(\theta) - c'(\theta)| \leq \|c''\|_{L^{\infty}} |\xi-\theta| \leq \|c''\|_{L^{\infty}} h_n \xrightarrow[n \rightarrow +\infty]{} 0
\end{equation*}
Thus $c_n'$ converges to $c'$ pointwise except maybe on the finite set of points where $c$ is not twice differentiable. Furthermore, the above equation also implies that $\text{Lip}(c_n) \leq \|c''\|_{L^{\infty}} h_n + \text{Lip}(c)$ which is uniformly bounded in $n$ and therefore Proposition \ref{prop:convergence_length_meas} leads to the conclusion.  
\end{proof}
% In particular, since $\text{Length}(c_n) = \mu_{c_n}(\Sp^1)$, under the assumptions of the previous proposition, we recover the very well-known fact that the length of the polygonal approximations $c_n$ converge to the length of $c$. This is actually true under less restrictive assumptions: for instance $c'$ Riemann integrable is enough. It remains unclear to the author, however, to what extent the assumptions of Proposition \ref{prop:convergence_polygons} could be relaxed. 
Note that the above approximation property requires more regularity on $c$. We will see in the next section that the result holds for all convex curves as well.   

\section{Convex curves and length measures}
\label{sec:convex_curves}
As already pointed out, the length measure $\mu_c$ does not characterize the curve itself since there are in fact infinitely many curves of $\tilde{\mathcal{C}}$ in the fiber $M^{-1}(\{\mu_c\})$ for any given $c$. Yet, remarkably, it is the case if one restricts to convex curves in $\tilde{\mathcal{C}}$. This follows from the fundamental Minkowski-Fenchel-Jessen theorem (also in part due to Alexandrov) established in \cite{fenchel1938,alexandroff1937} that shows in general dimension the uniqueness of a convex domain associated to any given \textit{area measure}. For the specific case of planar curves which is the focus of this paper, we provide a more direct and constructive proof of this result in the following section before highlighting a few other well-known connections with convex geometry.

\subsection{Characterization of convex curves by their length measure}
In all the following, we will denote by $\tilde{\mathcal{C}}_{conv}$ the set of curves in $\tilde{\mathcal{C}}$ which are simple, convex and positively oriented. For technical reasons that will appear later, will adopt the convention that degenerate convex curves made of two opposite segments (which length measures are of the form $r(\delta_{\theta_0} + \delta_{\theta_0+\pi})$) belong to $\tilde{\mathcal{C}}_{conv}$. For simplicity, we shall still write $M: \tilde{\mathcal{C}}_{conv} \rightarrow \mathcal{M}_0$ for the restriction of the previous length measure mapping to $\tilde{\mathcal{C}}_{conv}$. Before stating the main connection between curves in $\tilde{\mathcal{C}}_{conv}$ and length measures, let us start with the following lemma.   
\begin{lemma}
\label{lemma:proof_convex}
 Let $c \in \mathcal{C}$ such that there exists $0\leq \theta_1 < \theta_2 <2\pi$ with $c(\theta_1) = c(\theta_2)$. Then there exist $\theta,\tilde{\theta} \in [\theta_1,\theta_2]$ such that $|\text{angle}(c'(\theta))-\text{angle}(c'(\tilde{\theta}))|\geq \pi$ with strict equality unless $c([\theta_1,\theta_2])$ is a segment.
\end{lemma}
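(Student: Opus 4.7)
The plan is to exploit the closure condition $c(\theta_1)=c(\theta_2)$ through the vector identity
\[
\int_{\theta_1}^{\theta_2} c'(\theta)\,d\theta \,=\, c(\theta_2)-c(\theta_1) \,=\, 0,
\]
which holds because $c$ is Lipschitz and hence absolutely continuous. Rewriting $c'(\theta)=|c'(\theta)|\,T_c(\theta)$, this says that the speed-weighted distribution of tangent directions on $[\theta_1,\theta_2]$ has vanishing mean in $\C$.

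The key geometric step will be to show that the Gauss map $T_c$ cannot take values in any open half-circle of $\Sp^1$ for almost every $\theta \in [\theta_1,\theta_2]$. Indeed, if for some $w \in \Sp^1$ one had $\mathrm{Re}(\bar w\,T_c(\theta)) > 0$ a.e., then combined with $|c'(\theta)| > 0$ a.e.\ this would give $\mathrm{Re}(\bar w\,c'(\theta)) > 0$ a.e., and integrating would yield $0 = \mathrm{Re}\bigl(\bar w \int c'\,d\theta\bigr) > 0$, a contradiction. It then follows that the smallest closed arc of $\Sp^1$ containing the (essential) image of $T_c$ has length $L \geq \pi$, and by selecting $\theta,\tilde\theta$ whose tangent directions approach the two endpoints of this minimal arc, one obtains angle values (in a branch aligned with the arc) that differ by at least $\pi$, and even strictly more than $\pi$ whenever $L > \pi$.

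For the equality case, suppose $L = \pi$, so that $T_c(\theta) \in \{e^{i\phi}:\phi\in[\gamma,\gamma+\pi]\}$ for a.e.\ $\theta$ and some $\gamma \in \R$. Taking the imaginary part of $e^{-i\gamma}\int c'\,d\theta = 0$ yields
\[
\int_{\theta_1}^{\theta_2} |c'(\theta)|\,\sin(\alpha(\theta)-\gamma)\,d\theta \,=\, 0,
\]
with $\alpha(\theta)-\gamma \in [0,\pi]$ a.e. Since the integrand is then nonnegative a.e.\ and $|c'| > 0$ a.e., it must vanish a.e.; hence $\alpha(\theta)-\gamma \in \{0,\pi\}$ a.e., so $c'(\theta)$ is a real scalar multiple of $e^{i\gamma}$ almost everywhere. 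Integrating, $c(\theta)-c(\theta_1) \in \R\cdot e^{i\gamma}$ for every $\theta$, so $c([\theta_1,\theta_2])$ is contained in a line and, as the continuous image of a connected set, is a segment. Conversely, a segment traversed out and back obviously realizes the equality $|\mathrm{angle}(c'(\theta))-\mathrm{angle}(c'(\tilde\theta))|=\pi$.

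The main technical point is the contradiction step that rules out any open half-circle: it relies crucially on the a.e.\ strict positivity of $|c'|$ to convert an a.e.\ half-plane constraint on $T_c$ into a strictly positive integral of $c'$. The equality analysis then just re-runs the same computation non-strictly on a closed half-circle, where the extremal configuration forces the tangent distribution to concentrate on two antipodal directions.
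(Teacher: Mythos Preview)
Your proposal is correct and follows essentially the same approach as the paper: both arguments use the closure identity $\int_{\theta_1}^{\theta_2} c'(\theta)\,d\theta=0$ together with the a.e.\ positivity of $|c'|$ to rule out that the tangent directions lie in an open half-circle, and both treat the borderline case by the same imaginary-part computation forcing $\text{angle}(c')\in\{\gamma,\gamma+\pi\}$ a.e. The only cosmetic difference is that the paper normalizes by a rotation so that $\text{angle}(c')\in[0,\pi)$ and then splits into two sub-cases, whereas you phrase the same step more compactly via the inner product with a generic $w\in\Sp^1$ and the minimal closed arc containing the essential range of $T_c$.
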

\begin{proof}
 By contradiction, let's assume that given $\theta_1,\theta_2$ as above, we have $|\text{angle}(c'(\theta))-\text{angle}(c'(\tilde{\theta}))|< \pi$ for all $\theta,\tilde{\theta} \in [\theta_1,\theta_2]$ where $c'(\theta)$ is defined. Up to a rotation and translation of the curve, we may assume that $c(\theta_1)=c(\theta_2)=0$ and that $\text{angle}(c'(\theta)) \in [0,\pi)$ for $\theta \in [\theta_1,\theta_2]$. Assuming that $\text{angle}(c'(\theta)) =0$ a.e on $[\theta_1,\theta_2]$ would lead to $c(\theta_2) = \int_{\theta_1}^{\theta_2} |c'(\theta)| d\theta >0$ which is impossible. On the other hand, if $\text{angle}(c'(\theta))\neq 0$ on a subset of $[\theta_1,\theta_2]$ of non-zero measure, then we would have:
 \begin{equation}
 \label{eq:lemma_imag_part}
  \text{Imag}(c(\theta_2)) = \int_{\theta_1}^{\theta_2} |c'(\theta)| \sin(\text{angle}(c'(\theta))) d \theta 
 \end{equation}
strictly positive which is again a contradiction. It results that we can find $\theta,\tilde{\theta} \in [\theta_1,\theta_2]$ such that $|\text{angle}(c'(\theta))-\text{angle}(c'(\tilde{\theta}))|\geq \pi$. Furthermore, if $\max_{\theta \in [\theta_1,\theta_2]} |\text{angle}(c'(\theta))-\text{angle}(c'(\tilde{\theta}))| = \pi$, it can be easily seen from a similar reasoning as above and \eqref{eq:lemma_imag_part} that in this case, one must have $\text{angle}(c'(\theta)) = 0$ or $\pi$ a.e on $[\theta_1,\theta_2]$ and therefore the curve is a subset of the horizontal line.   
\end{proof}

The following result is a reformulation of the Minkowski-Fenchel-Jessen theorem in the special setting of this paper. The usual proof of this theorem in general dimensions is quite involved and requires many preliminary results and notions from convex geometry. We propose here an alternative proof specific to the case of curves which is more elementary and constructive.      
\begin{theorem}
\label{thm:tangent_length_map}
 The length measure mapping $M$ is a bijection from $\tilde{C}_{conv}$ to $\mathcal{M}^+_0$. 
\end{theorem}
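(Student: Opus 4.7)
The plan is to handle surjectivity by an explicit construction and injectivity by uniqueness of monotone rearrangements. For surjectivity, given $\mu \in \mathcal{M}^+_0$ with $L := \mu(\Sp^1) > 0$, I would consider the cumulative distribution function $F_\mu(\theta) := \mu([0,\theta])$ and its left-continuous pseudo-inverse $G(s) := \inf\{\theta \in [0,2\pi] : F_\mu(\theta) \geq s\}$, which is a non-decreasing map $[0,L] \to [0,2\pi]$ satisfying $G_*(\lambda^1|_{[0,L]}) = \mu$. I then set
\[
\tilde c(s) := \int_0^s e^{iG(s')}\,ds', \qquad s \in [0,L].
\]
By construction $\tilde c$ is $1$-Lipschitz with $\tilde c'(s) = e^{iG(s)}$ almost everywhere, and closedness $\tilde c(L) = \tilde c(0) = 0$ follows at once from the pushforward identity together with the defining constraint $\int_{\Sp^1} e^{i\theta}\,d\mu(\theta) = 0$ of $\mathcal{M}^+_0$. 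A linear reparametrization onto $\Sp^1$ places $\tilde c$ in $\mathcal{C}$, and a second application of the same pushforward identity immediately yields $M(\tilde c) = \mu$.

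The crux of surjectivity is then to show that $\tilde c$ represents an element of $\tilde{\mathcal{C}}_{conv}$, for which I would invoke Lemma \ref{lemma:proof_convex}. If $\tilde c(s_1) = \tilde c(s_2)$ for some $0 \leq s_1 < s_2 \leq L$, the lemma applied to the sub-arc $[s_1,s_2]$ forces $G(s_2) - G(s_1) \geq \pi$; applied to the complementary arc (traversed through the periodic identification) it forces $2\pi - (G(s_2) - G(s_1)) \geq \pi$ as well. Hence both inequalities are equalities and, by the equality clause of the lemma, each piece of the curve must be a line segment, which means $\tilde c$ is precisely the degenerate two-opposite-segments configuration explicitly included in $\tilde{\mathcal{C}}_{conv}$. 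Outside this case $\tilde c$ is a simple closed Lipschitz curve whose tangent direction is non-decreasing in arc length with total turning $2\pi$, and hence is positively oriented and bounds a convex planar region. For injectivity, given $c_1, c_2 \in \tilde{\mathcal{C}}_{conv}$ with $\mu_{c_1} = \mu_{c_2} = \mu$, I would parametrize both by arc length on $[0,L]$ and use the freedom of base-point choice on each curve to fix a common $\theta^* \in \mathrm{supp}(\mu)$ and arrange that the lifted tangent angle functions $\alpha_i : [0,L] \to [\theta^*, \theta^* + 2\pi]$ both start at $\theta^*$ and end at $\theta^* + 2\pi$. Each $\alpha_i$ is non-decreasing and pushes Lebesgue on $[0,L]$ forward to the same measure on $[\theta^*, \theta^* + 2\pi]$ (namely $\mu$ canonically lifted), so the uniqueness of monotone rearrangements forces $\alpha_1 = \alpha_2$ almost everywhere, whence $c_1' = c_2'$ a.e. and $c_1, c_2$ differ by a translation.

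The main obstacle will be the verification that the constructed $\tilde c$ genuinely lies in $\tilde{\mathcal{C}}_{conv}$: one must carefully combine Lemma \ref{lemma:proof_convex} with the non-decreasing structure of $G$ to rule out self-intersections except in the degenerate two-segment case, and also appeal to the classical fact that a simple closed Lipschitz curve with monotone tangent angle of total turning $2\pi$ bounds a convex region. Once this is in place, everything else reduces to routine manipulations of cumulative distribution functions, their pseudo-inverses, and pushforwards under monotone maps.
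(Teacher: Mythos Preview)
Your approach is essentially the paper's: both construct the convex curve from $\mu$ via the pseudo-inverse $G = F_\mu^{(-1)}$ of the cumulative distribution function and verify it lies in $\tilde{\mathcal{C}}_{conv}$, and both prove injectivity by identifying the arc-length tangent angle of any convex representative with $F_\mu^{(-1)}$ (which is exactly your ``uniqueness of monotone rearrangements'').

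The one real difference is how you rule out self-intersections of the constructed curve. The paper argues asymmetrically: assuming $c(\theta_1)=c(\theta_2)$ with $0\le\theta_1<\theta_2<2\pi$, it splits into cases according to whether $c([0,\theta_1])$ is already a segment, and in the non-degenerate case locates $c(0)$ strictly on one side of a suitable tangent line while showing $c(\theta)$ for $\theta\ge\theta_2$ must remain weakly on the other side, contradicting closedness. Your symmetric idea---apply Lemma~\ref{lemma:proof_convex} to \emph{both} the inner arc and the complementary arc and force $G(s_2)-G(s_1)=\pi$---is cleaner, but watch one point. As literally stated, Lemma~\ref{lemma:proof_convex} bounds $|\mathrm{angle}(c'(\theta))-\mathrm{angle}(c'(\tilde\theta))|$ with angles taken in $[0,2\pi)$; on the complementary arc the range of $G$ straddles the cut $0\sim 2\pi$, so a naive application yields only the trivial bound (two angles near $0$ and near $2\pi$ already differ by almost $2\pi$). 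What you need, and what the proof of the lemma actually establishes, is the rotation-invariant statement that the tangent directions of a closed loop cannot all lie in an open half-plane. Applied to the complementary arc this gives $2\pi-(G(s_2^+)-G(s_1^-))\ge\pi$, and together with $G(s_2^-)-G(s_1^+)\ge\pi$ from the inner arc and the monotonicity of $G$ this forces all equalities and hence (by the equality clause) that both arcs are segments. Once you spell this out, your argument goes through and is somewhat more transparent than the paper's case analysis.
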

\begin{proof}
 1. We first show the surjectivity of $M$. Thus, taking $\mu \in \mathcal{M}^+_0$, we want to construct $c \in \mathcal{C}$, a parametrization of a curve in $\tilde{\mathcal{C}}_{conv}$ such that $M(c) = \mu_c = \mu$. To do so, let us first assume, without loss of generality thanks to Proposition \ref{prop:basic_properties_mu} on the action of rescaling on length measures, that $\mu(\Sp^1)=2\pi$. Then let $F_{\mu}:[0,2\pi] \rightarrow [0,2\pi]$ be the c.d.f of $\mu$ as we defined previously and let's introduce its pseudo-inverse $F_{\mu}^{(-1)}: [0,2\pi] \rightarrow [0,2\pi]$:
 \begin{equation*}
  F_{\mu}^{(-1)}(s) = \inf\{\theta \in [0,2\pi] \ | \  F_{\mu}(\theta)\geq s\}.
 \end{equation*}
 Note that, as $F_{\mu}$, the pseudo-inverse $F_{\mu}^{(-1)}$ is a non-decreasing function with $F_{\mu}^{(-1)}(0)=0$. We now define $c:[0,2\pi] \rightarrow \C$ as follows:
 \begin{equation}
  \label{eq:def_c_mu}
  c(\theta) = \int_{0}^{\theta} e^{i F_{\mu}^{(-1)}(s)} ds.
 \end{equation}
 We first see that $c \in \mathcal{C}$. Indeed, by construction, $c$ is differentiable almost everywhere with $c'(\theta) = e^{i F_{\mu}^{(-1)}(\theta)}$ giving $|c'(\theta)|=1$ thus $c \in \text{Lip}(\Sp^1,\C)$ and $c$ is a Lipschitz immersion. Next, we obtain that $\mu_c = \mu$ by checking the equality $F_{\mu_c} = F_{\mu}$. This is simply a consequence of the fact that $F_{\mu_c}(\theta) = \lambda^1(\{\alpha \ | \ 0 \leq F_{\mu}^{(-1)}(\alpha) \leq \theta\})$ and the easy verification that $F_{\mu}^{(-1)}(\alpha) \leq \theta \Leftrightarrow \alpha \leq F_{\mu}(\theta)$ so that 
 \begin{equation*}
  F_{\mu_c}(\theta) = \lambda^1(\{\alpha \ | \ 0 \leq \alpha \leq F_{\mu}(\theta)\}) = F_{\mu}(\theta).
 \end{equation*}
Incidentally, this also shows that the curve is indeed closed as:
\begin{equation*}
 c(2\pi)-c(0) = \int_0^{2\pi} c'(\theta) d\theta = \int_{0}^{2\pi} e^{i\theta} d\mu_c(\theta) = \int_{0}^{2\pi} e^{i\theta} d\mu(\theta) =0.
\end{equation*}
One still needs to verify that the image of $c$ belongs to $\tilde{\mathcal{C}}_{conv}$. As $c'(\theta) = e^{i F_{\mu}^{(-1)}(\theta)}$ and $F_{\mu}^{(-1)}$ is non-decreasing, $c$ is locally convex and we only need to show that $c$ is simple or is supported by a straight segment. By contradiction, let's assume that there exists $0\leq \theta_1 < \theta_2<2\pi$ such that $c(\theta_1)=c(\theta_2)$. We have to consider the two following cases: 
\begin{itemize}
 \item If $c([0,\theta_1])$ is not a segment then, since $\text{angle}(c'(\theta))$ is non-decreasing on $[0,\theta_1]$, there is a limit $\alpha = \lim_{\theta \uparrow \theta_1} \text{angle}(c'(\theta)) \in [0,2\pi)$ and we have that $c(0)$ is strictly above the line passing by $c(\theta_1)$ and directed by $e^{i\alpha}$. Furthermore, using Lemma \ref{lemma:proof_convex}, we deduce that for all $\theta \geq \theta_2$, we have $\alpha +\pi \leq \text{angle}(c'(\theta)) <2\pi$ which implies that for all $\theta \geq \theta_2$, $c(\theta)$ is below the line passing by $c(\theta_2)=c(\theta_1)$ directed by $e^{i\alpha}$ which contradicts the fact that $\lim_{\theta\rightarrow 2\pi^-} c(\theta) = c(0)$. 
 \item If $c([0,\theta_1])$ is a segment, then by Lemma \ref{lemma:proof_convex} and the fact that $\theta \mapsto \text{angle}(c'(\theta))$ is non-decreasing, we have that $\text{angle}(c'(\theta))\geq \text{angle}(c'(0))+\pi$ for all $\theta_2 \leq \theta < 2\pi$ with even strict inequality if $c([\theta_1,\theta_2])$ is not a segment of the same direction as $c([0,\theta_1])$. The latter case is not possible since we would then find that for $\theta>\theta_2$, $c(\theta)$ is strictly below the line containing the segment $c([0,\theta_1])$ and thus $\lim_{\theta \rightarrow 2\pi^{-}} c(\theta) \neq c(0)$. Finally, by a similar argument, we find that the image of $c$ on $[\theta_2,2\pi)$ is again a segment of the same direction, which shows that $\text{Im}(c)$ is eventually a segment.   
\end{itemize}
Thus the curve is either simple, convex and positively oriented or is a segment, and in all cases belong to $\tilde{\mathcal{C}}_{conv}$.
\vskip1ex
2. Let us now prove the injectivity of $M$. Specifically, we show that if $c$ is the arclength parametrization of a curve in $\tilde{C}_{conv}$, which we assume once again to have length $2\pi$, and $\mu_c = \mu \in \mathcal{M}^+_0$ then for almost all $s\in[0,2\pi)$, $c'(s)=e^{i F_{\mu}^{(-1)}(s)}$. By assumption, we have $|c'(s)|=1$ and since the curve is convex and positively oriented, up to a shifting of the parameter, we may assume that $s \mapsto \text{angle}(c'(s))$ is non-decreasing from $[0,2\pi)$ to $[0,2\pi)$. Let us denote $\nu(s) \doteq \text{angle}(c'(s)) \in [0,2\pi)$. From $F_{\mu_c}=F_{\mu}$ we get that $F_{\mu}(\theta) = \lambda^1(\{s \ | \ 0\leq \nu(s) \leq \theta\})$. We need to show that $\nu(s)=F_{\mu}^{(-1)}(s) = \inf\{\theta \ | \ F_{\mu}(\theta)\geq s\}$. On the one hand, we have $F_{\mu}(\nu(s))=\lambda^1(\{\tilde{s} \ | \ 0\leq \nu(\tilde{s}) \leq \nu(s)\})\geq s$ since for any $\tilde{s}\in[0,s]$ we have $0\leq \nu(\tilde{s}) \leq \nu(s)$. This leads to $F_{\mu}^{(-1)}(s)\leq \nu(s)$. On the other hand, for any $\theta < \nu(s)$ we have that $F_{\mu}(\theta)<s$ by definition of $\nu(s)$ and consequently $F_{\mu}^{(-1)}(s)\geq \theta$ for all $\theta<\nu(s)$ leading to $F_{\mu}^{(-1)}(s)\geq \nu(s)$.
\end{proof} 
\begin{remark}
\label{rem:reconstruction_convex_curve}
 Note that from the above proof, one can technically reconstruct a convex curve up to translation from its length measure directly based on \eqref{eq:def_c_mu}. When the measure is discrete i.e. $\mu=\sum_{i=1}^{N} l_j \delta_{\alpha_j}$ where $l_j>0$ and $\alpha_j \in [0,2 \pi)$ for all $j=1,\ldots,N$, the reconstruction becomes particularly simple. In this case, one can see that the corresponding convex polygon is obtained by selecting an initial vertex (e.g. at the origin) and ordering the edges $l_{j_1} e^{i\alpha_{j_1}},\ldots,l_{j_N} e^{i\alpha_{j_N}}$ such that the angles $0\leq \alpha_{j_1} \leq \ldots \leq \alpha_{j_N}<2\pi$ are in ascending order, which is a well-known algorithm for convex planar objects. However, this reconstruction is a significantly more difficult problem for area measures in higher dimensions, c.f. the discussion in Section \ref{sec:future_directions}.
\end{remark}

\subsection{Length measures and Minkowski sum of convex sets}
\label{ssec:area_measures}
The above correspondence between convex shapes and length measures has many interesting consequences and applications, in particular for the study of mixed areas and Brunn-Minkowski theory, as developed for example in \cite{Letac1983,Heijmans1998,schneider_2013}. We will not go over all of these in detail but only recap in this section a few results which shall be relevant for the rest of the paper.

First, in the category of planar convex curves, one has the following stronger version of the approximation result of Proposition \ref{prop:convergence_polygons}:
\begin{prop}
\label{prop:convergence_convex_polygons}
 Let $C$ be a planar convex domain with boundary $c=\partial C \in \tilde{C}_{conv}$ and $(P_n)$ a sequence of convex polygons of boundary $p_n = \partial P_n$ that converges in Hausdorff distance to $C$. Then $\mu_{p_n} \rightharpoonup \mu_{c}$ as $n \rightarrow +\infty$. Furthermore, the area of $P_n$ converges to the area of $C$ i.e. $\lambda^2(P_n) \rightarrow \lambda^2(C)$.
\end{prop}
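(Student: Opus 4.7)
The plan is to reduce the statement to the three classical pieces: compactness, identification of the weak limit via the Minkowski--Fenchel--Jessen characterization of Theorem \ref{thm:tangent_length_map}, and a direct Minkowski-sum argument for the area.

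First, I would establish a uniform bound on the perimeters $L(p_n)=\mu_{p_n}(\Sp^1)$. For planar convex bodies, Hausdorff convergence $P_n\to C$ implies continuity of the perimeter (a standard monotonicity argument: if $P_n\subset C+\epsilon_n B$ and $C\subset P_n+\epsilon_n B$ with $\epsilon_n\to 0$, monotonicity of perimeter under inclusion for convex sets, together with $L(\partial (K+\epsilon B))=L(\partial K)+2\pi\epsilon$, yields $L(p_n)\to L(c)$). In particular $(\mu_{p_n})$ is tight with bounded total mass, so by weak-$*$ compactness of Radon measures on $\Sp^1$, any subsequence has a further subsequence $\mu_{p_{n_k}}\rightharpoonup\mu$ for some $\mu\in\mathcal{M}^+(\Sp^1)$. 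Testing weak convergence against $e^{i\theta}$ preserves the closure relation $\int e^{i\theta}d\mu_{p_n}=0$, so $\mu\in\mathcal{M}^+_0$.

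Second, I would identify $\mu$ as $\mu_c$. By Theorem \ref{thm:tangent_length_map} there exists a unique $\tilde c\in\tilde{\mathcal{C}}_{conv}$ with $M(\tilde c)=\mu$, and the proof of that theorem provides an explicit reconstruction via \eqref{eq:def_c_mu}. Applying \eqref{eq:def_c_mu} to each $\mu_{p_n}$ (after rescaling the total mass to $2\pi$) yields a parametrization $\tilde p_n$ that is a translate of $p_n$, and applying it to $\mu$ yields a parametrization of $\tilde c$. Weak convergence $\mu_{p_{n_k}}\rightharpoonup\mu$ forces pointwise convergence of the cumulative distribution functions $F_{\mu_{p_{n_k}}}\to F_\mu$ at all continuity points of $F_\mu$, and since $F_\mu$ is monotone with at most countably many jumps, the pseudo-inverses $F_{\mu_{p_{n_k}}}^{(-1)}$ converge to $F_\mu^{(-1)}$ almost everywhere on $[0,2\pi)$. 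Lebesgue dominated convergence (the integrands $e^{iF^{(-1)}}$ are uniformly bounded by $1$) then gives uniform convergence of the reconstructed curves $\tilde p_{n_k}\to\tilde c$. Combined with the Hausdorff convergence $p_n\to\partial C$ (and hence of their translates $\tilde p_n$), the limit $\tilde c$ must be a translate of $c$, so $\mu=M(\tilde c)=M(c)=\mu_c$. Since every subsequence admits a further subsequence with the same limit $\mu_c$, the full sequence satisfies $\mu_{p_n}\rightharpoonup\mu_c$.

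Third, for the area convergence, I would not invoke any measure-theoretic machinery but rather directly use Hausdorff convergence. By definition, there is $\epsilon_n\to 0$ with $P_n\subset C+\epsilon_n B$ and $C\subset P_n+\epsilon_n B$, where $B$ is the closed unit disk. The Steiner formula for planar convex bodies gives $\lambda^2(C+\epsilon_n B)=\lambda^2(C)+\epsilon_n L(c)+\pi\epsilon_n^2$ and analogously for $P_n$. The uniform bound on $L(p_n)$ from the first step then yields $|\lambda^2(P_n)-\lambda^2(C)|\leq \epsilon_n(L(c)+L(p_n))+2\pi\epsilon_n^2\to 0$.

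The main obstacle I expect is the identification step: one must carefully connect weak convergence of the length measures to Hausdorff convergence of the associated curves, which is precisely where the bijectivity and the constructive nature of Theorem \ref{thm:tangent_length_map} are doing the real work. The rest is essentially soft analysis (compactness, dominated convergence) and a short Minkowski-sum computation.
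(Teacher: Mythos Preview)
Your argument is sound. The compactness step via perimeter continuity under Hausdorff convergence is standard, the identification step using the explicit reconstruction \eqref{eq:def_c_mu} together with a.e.\ convergence of the pseudo-inverses works (dominated convergence indeed gives $L^1$ convergence of the integrands $e^{iF^{(-1)}}$, hence uniform convergence of the reconstructed curves), and the Steiner-formula sandwich for the area is clean. The only details you should make explicit are: (i) the rescaling factors $2\pi/L(p_{n_k})$ converge since $L(p_n)\to L(c)$ from your first step, so the reconstructed curves are rescaled translates of the $p_{n_k}$ with scaling factors tending to $1$; and (ii) the translations $z_{n_k}$ are bounded because both $\tilde p_{n_k}$ (all passing through the origin) and $p_{n_k}$ (Hausdorff-converging) have uniformly bounded images, so a subsequence of translations converges and identifies $\tilde c$ as a translate of $c$.

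Your route is genuinely different from the paper's, which does not give a proof at all but simply cites Schneider's textbook (Theorems 1.8.16 and 4.1.1). The standard proofs there go through support functions: Hausdorff convergence of convex bodies is equivalent to uniform convergence of their support functions, and the area measure is a weakly continuous functional of the support function. Your argument instead bootstraps the bijectivity of the length-measure map from Theorem~\ref{thm:tangent_length_map} and its constructive inverse, which keeps everything internal to the paper's framework and is more in line with the paper's stated aim of an elementary, self-contained treatment. The support-function approach is shorter once that machinery is in place, but yours avoids introducing it.
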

This is classical property of convex sets and length measures which proof can be found in \cite{schneider_2013} (Theorem 1.8.16 and Theorem 4.1.1).

We now recall the definition of the Minkowski sum. If $C_1$ and $C_2$ are two convex planar domains, their Minkowski sum (also known as dilation in mathematical morphology) is defined by $C_1 + C_2 = \{x_1+x_2 \ | \ x_1 \in C_1, x_2 \in C_2\}$ which is also a convex planar domain. More generally, one can define the Minkowski combination $a_1 C_1 + a_2 C_2$ for $a_1, a_2 \geq 0$ as $a_1 C_1 + a_2 C_2 = \{a_1 x_1 + a_2 x_2 \ | \ x_1 \in C_1, x_2 \in C_2\}$. This allows to view the set of all convex domains as a convex cone for this Minkowski addition. The length measure mapping $M$ has the following interesting property (\cite{Letac1983} Theorem 3.2):
\begin{prop}
 Let $C_1$ and $C_2$ be two convex domains and $c_1, c_2$ their boundary curves. Then, denoting $c \in \mathcal{C}$ a parametrization of the boundary of $C_1 + C_2$, it holds that $\mu_{c} = \mu_{c_1} + \mu_{c_2}$. 
\end{prop}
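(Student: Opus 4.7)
The plan is to first establish the identity directly for convex polygons and then extend to arbitrary convex domains by a Hausdorff approximation argument, using Proposition \ref{prop:convergence_convex_polygons}.

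\textbf{Polygonal case.} Assume first that $P_1, P_2$ are convex polygons with boundaries $p_1, p_2$. By Example \ref{ex:poylgon}, their length measures are purely atomic: $\mu_{p_i} = \sum_j l_j^{(i)} \delta_{\exp(i\alpha_j^{(i)})}$, where $l_j^{(i)}$ and $\alpha_j^{(i)}$ record the edge lengths and directions of $P_i$. The key classical observation is that $P_1 + P_2$ is itself a convex polygon whose directed edges, listed in cyclic angular order along the boundary, are exactly the union of the directed edges of $P_1$ and those of $P_2$ (edges of identical direction being concatenated). This can be justified through support functions using $h_{P_1+P_2} = h_{P_1} + h_{P_2}$: the vertex of $P_1 + P_2$ with outward normal $n$ is $v_1 + v_2$, where $v_i \in P_i$ maximizes $\langle \cdot, n \rangle$, and along a directed edge of $\partial(P_1 + P_2)$ of direction $e^{i\alpha}$ the total displacement equals the sum of the corresponding edge displacements in $P_1$ and $P_2$ (with zero contribution from a polygon having no edge in that direction). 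Applying Example \ref{ex:poylgon} once more, this translates directly into $\mu_{\partial(P_1 + P_2)} = \mu_{p_1} + \mu_{p_2}$.

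\textbf{General case by approximation.} Next, I would approximate $C_1$ and $C_2$ in Hausdorff distance by sequences of convex polygons $P_n^{(1)} \to C_1$ and $P_n^{(2)} \to C_2$, which is always possible (e.g.\ by taking convex hulls of increasingly dense samples on the boundaries). Minkowski addition is $1$-Lipschitz with respect to the Hausdorff distance, i.e.\ $d_H(A+B, A'+B') \leq d_H(A,A') + d_H(B,B')$, so $P_n^{(1)} + P_n^{(2)} \to C_1 + C_2$ in Hausdorff distance as well. Proposition \ref{prop:convergence_convex_polygons} then yields the three weak convergences $\mu_{\partial P_n^{(1)}} \rightharpoonup \mu_{c_1}$, $\mu_{\partial P_n^{(2)}} \rightharpoonup \mu_{c_2}$ and $\mu_{\partial(P_n^{(1)} + P_n^{(2)})} \rightharpoonup \mu_c$. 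Combining this with the polygonal identity $\mu_{\partial(P_n^{(1)} + P_n^{(2)})} = \mu_{\partial P_n^{(1)}} + \mu_{\partial P_n^{(2)}}$ established above and passing to the weak limit concludes the proof.

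The principal obstacle is the polygonal step: although the edge structure of Minkowski sums of convex polygons is standard, one must be careful to handle the case where $P_1$ and $P_2$ have edges of identical direction, in which case the corresponding Dirac masses simply stack at the same point of $\Sp^1$ (consistent with edge concatenation in $P_1 + P_2$). If one wishes to bypass this explicit geometric verification, an alternative is to invoke Theorem \ref{thm:tangent_length_map} directly: the measure $\mu_{c_1} + \mu_{c_2}$ lies in $\mathcal{M}^+_0$ and therefore corresponds to a unique convex curve, and identifying this curve with $\partial(C_1 + C_2)$ can then be carried out either through the explicit reconstruction formula \eqref{eq:def_c_mu} or through the same polygonal approximation scheme applied to both sides.
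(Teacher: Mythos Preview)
Your proposal is correct and follows essentially the same route as the paper: reduce to the polygonal case (which the paper simply cites as a well-known fact from computational geometry) and then pass to general convex bodies by Hausdorff approximation together with Proposition \ref{prop:convergence_convex_polygons}. You supply more detail than the paper does, notably the Lipschitz property of Minkowski addition for the Hausdorff metric and the handling of parallel edges, but the architecture of the argument is identical.
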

\begin{proof}
 This is easily shown by approximating the convex domains $C_1$ and $C_2$ by sequences of convex polygons and using Proposition \ref{prop:convergence_convex_polygons}. The fact that the result holds for two convex polygons is well-known and is actually used algorithmically for the computation of the Minkowski sum of polygons in the plane in linear time, c.f. for example \cite{van2000computational} (Chap. 13).   
\end{proof}

By combining the above with Theorem \ref{thm:tangent_length_map} and Proposition \ref{prop:basic_properties_mu} (3), we can summarize the properties of the length measure mapping $M$ as follows:
\begin{corollary}
\label{cor:isometry_convex_cones}
 The map $M:C \mapsto \mu_{\partial C}$ is an isomorphism of convex cones between $\tilde{C}_{conv}$ and $\mathcal{M}_0$. 
\end{corollary}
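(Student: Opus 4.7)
The plan is to assemble the corollary directly from three ingredients already in the excerpt: the bijectivity established in Theorem \ref{thm:tangent_length_map}, the additivity of $M$ with respect to Minkowski sum proved just above, and the homogeneity property recorded in Proposition \ref{prop:basic_properties_mu}(3). The work consists almost entirely of checking that the two sides carry compatible convex cone structures and that the length measure map respects the cone operations.

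First I would make the cone structures explicit. On the shape side, equip $\tilde{C}_{conv}$ with Minkowski addition and dilation by positive scalars; closedness under these operations is standard (the Minkowski sum of two planar convex sets is convex, and so is any $\lambda C$ with $\lambda > 0$), and the cone axioms (associativity, commutativity, distributivity of scalars over sums) are immediate from the set-theoretic definition $a_1 C_1 + a_2 C_2 = \{a_1 x_1 + a_2 x_2\}$. On the measure side, equip $\mathcal{M}_0^+(\Sp^1)$ (the positive cone inside $\mathcal{M}_0$, which is what $M$ actually lands in by Definition \ref{def:length_meas_map}) with ordinary addition of measures and multiplication by positive scalars; closedness is immediate because the moment condition $\int_{\Sp^1} e^{i\theta}\, d\mu(\theta) = 0$ is linear in $\mu$, so it is preserved by nonnegative linear combinations.

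Next I would verify the two homomorphism identities. Additivity, $M(C_1 + C_2) = M(C_1) + M(C_2)$, is precisely the content of the proposition immediately preceding the corollary. Positive homogeneity, $M(\lambda C) = \lambda M(C)$ for $\lambda > 0$, is a restatement of Proposition \ref{prop:basic_properties_mu}(3) applied to the boundary curve of $C$, after noting that the boundary of $\lambda C$ is $\lambda$ times the boundary of $C$ in the sense of Proposition \ref{prop:basic_properties_mu}. Combining these shows that $M$ preserves arbitrary nonnegative linear combinations and is therefore a morphism of convex cones.

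Finally, bijectivity is exactly Theorem \ref{thm:tangent_length_map}, so $M$ has a well-defined inverse $M^{-1}$; since $M$ is a bijective cone morphism, $M^{-1}$ automatically satisfies the same two identities and is itself a cone morphism, giving the isomorphism. There is no real obstacle here: the only subtlety worth flagging is that the target is interpreted as the positive cone $\mathcal{M}_0^+$ sitting inside the signed measure space $\mathcal{M}_0$ (length measures are positive), and that the degenerate segment curves were explicitly included in $\tilde{C}_{conv}$ so that the cone is closed under Minkowski combinations (for instance, a sum of two non-parallel segments is a parallelogram, but a sum of two parallel opposite segments still needs to be a legitimate element of $\tilde{C}_{conv}$, which is precisely the reason for the convention set at the beginning of Section \ref{sec:convex_curves}).
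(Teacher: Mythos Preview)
Your proposal is correct and follows exactly the approach the paper takes: the paper does not give a separate proof but simply states that the corollary follows by combining Theorem~\ref{thm:tangent_length_map}, the additivity proposition for Minkowski sums, and Proposition~\ref{prop:basic_properties_mu}(3). Your write-up just spells out these three ingredients and the cone-structure compatibility more carefully than the paper does, and your remarks about $\mathcal{M}_0^+$ versus $\mathcal{M}_0$ and the role of the degenerate-segment convention are accurate and to the point.
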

\begin{figure}
    \begin{tabular}{cccc}
    \includegraphics[trim = 10mm 10mm 10mm 5mm ,clip,width=4cm]{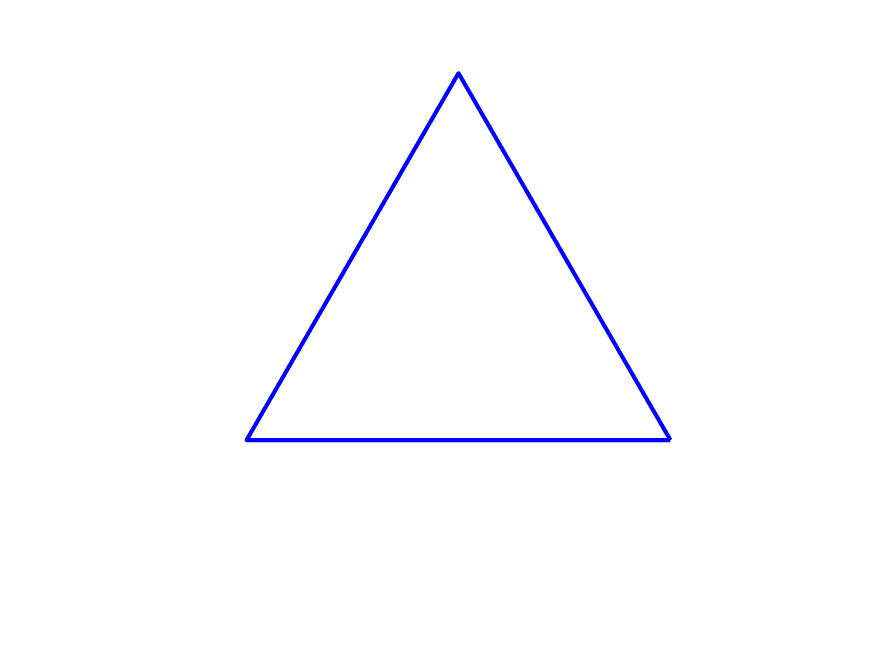} 
    &\includegraphics[trim = 10mm 10mm 10mm 5mm ,clip,width=4cm]{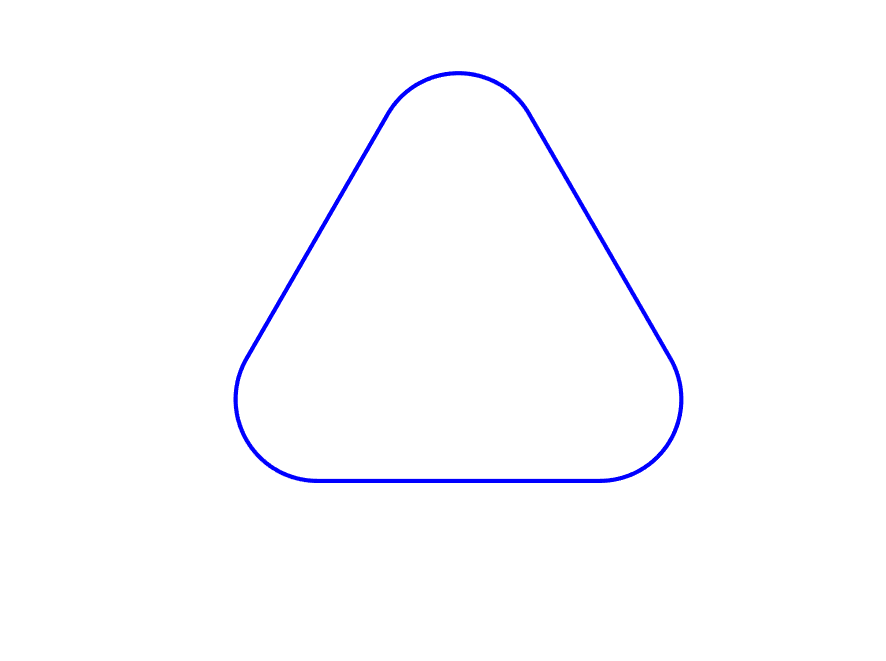}
    &\includegraphics[trim = 10mm 10mm 10mm 5mm ,clip,width=4cm]{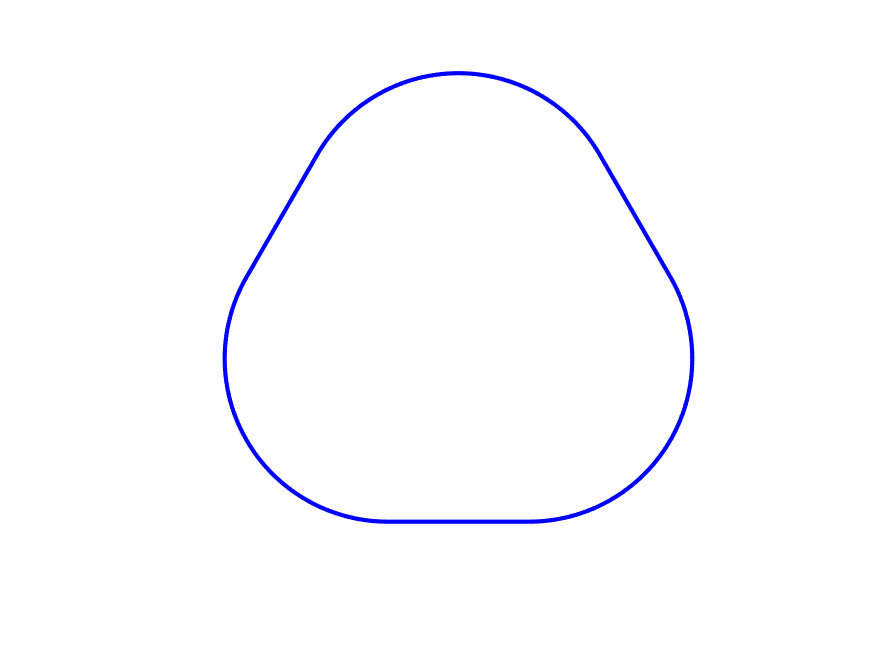} 
    &\includegraphics[trim = 10mm 10mm 10mm 5mm ,clip,width=4cm]{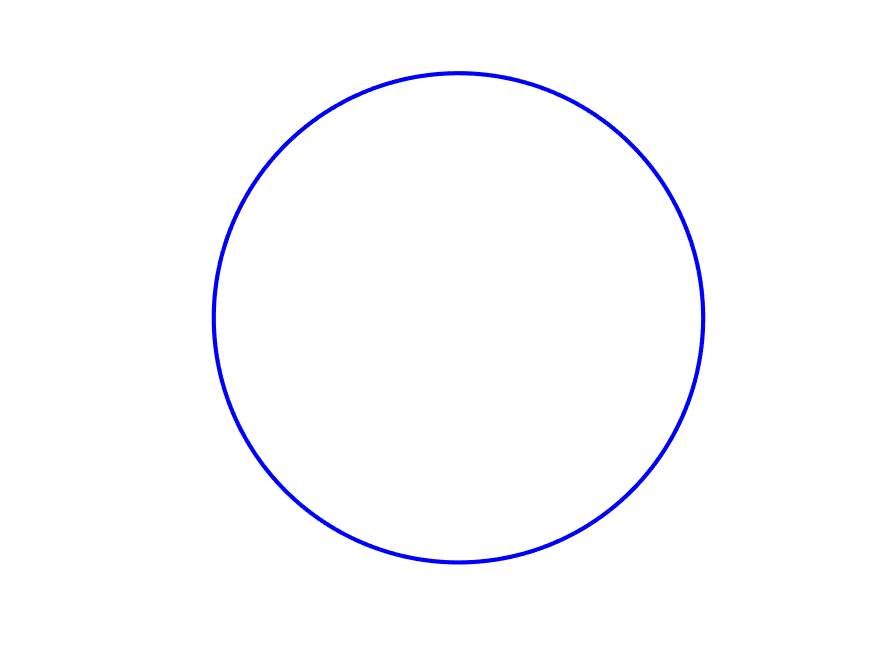}\\
    $\lambda=0$ & $\lambda=1/3$ & $\lambda=2/3$ & $\lambda=1$
    \end{tabular}
    \caption{Minkowski sum $(1-\lambda)C_1+\lambda C_2$ for different values of $\lambda$ of a triangle and a disk computed based on the addition of their length measures.}
    \label{fig:minkowski_combination}
\end{figure}
This implies that if $C_1$ and $C_2$ are two convex domains with length measures $\mu_{\partial C_1}$ and $\mu_{\partial C_2}$, their Minkowski sum $C= C_1+C_2$ is such that $\partial C = M^{-1}(\mu_{\partial C_1} + \mu_{\partial C_2})$ which could be directly computed using the inversion formula \eqref{eq:def_c_mu} or, in the case of discrete measures and polygons, by adequately sorting the Diracs appearing in $\mu_{\partial C_1} + \mu_{\partial C_2}$ with angles in ascending order as explained earlier in Remark \ref{rem:reconstruction_convex_curve}. We show an example of Minkowski sum computed with this approach in Figure \ref{fig:minkowski_combination}.

\section{An isoperimetric characterization}
\label{sec:isoperimetric_theorem}
The previous section showed that there is a unique convex curve of positive orientation in the preimage $M^{-1}(\{\mu\}) \subset \tilde{\mathcal{C}}$ for any measure $\mu \in \mathcal{M}^+_0$. Several previous works on length and area measures have investigated variational characterizations of these convex objects in the context of shape optimization and geometric inequalities, see for instance the survey of \cite{gardner2002brunn} and \cite{carlier2004theorem}. These are typically expressing some variational property within the set of convex shapes only. We prove here a distinct characterization which can be rather interpreted as an isoperimetric inequality in each of the fiber $M^{-1}(\{\mu\})$, namely the convex curve of $M^{-1}(\{\mu\})$ is also the one of maximal signed area among all the curves in $\tilde{\mathcal{C}}$ of length measure $\mu$. Our result extends to the whole class of Lipschitz regular curves some related results on polytopes in discrete geometry that were stated in \cite{Boroczky1986}. We will also see how the classical isoperimetric inequality on curves can be recovered as a consequence.

Let us first recall that the signed area of a curve in $\tilde{\mathcal{C}}$ with Lipschitz parametrization $c \in \mathcal{C}$ is given by (cf \cite{Younes2019} Chap 1.10): 
\begin{equation}
 \label{eq:def_signed_area}
 \text{Area}(c) = -\frac{1}{2} \int_{\Sp^1} \langle c(\theta),N_c(\theta) \rangle |c'(\theta)| d\theta = \frac{1}{2} \int_{\Sp^1} \det(c(\theta),c'(\theta)) d\theta
\end{equation}
where $N_c$ denotes the unit normal vector to the curve. Note that for a simple and positively oriented curve, \eqref{eq:def_signed_area} is the usual area enclosed by this curve. We begin by reminding a few preliminary properties of the signed area. The first one is that the signed area is additive with respect to the ``gluing'' of two cycles, namely:
\begin{lemma}
 \label{lemma:area_deccomp}
 Let $c \in \mathcal{C}$ and $0 \leq \theta_1 < \theta_2 < 2\pi$. Consider any given Lipschitz open curve $\gamma: [\theta_1, \theta_2] \rightarrow \C$ with $\gamma(\theta_1) = c(\theta_1)$ and $\gamma(\theta_2) = c(\theta_2)$ and denote $\check{\gamma}$ the same curve but with opposite orientation. Define $c_1$, $c_2$ the two closed curves in $\mathcal{C}$ obtained by respectively concatenating $c(\Sp^1 \backslash [\theta_1, \theta_2])$ with $\gamma$ and $c([\theta_1, \theta_2])$ with $\check{\gamma}$. Then:
 \begin{equation*}
  \text{Area}(c) = \text{Area}(c_1) + \text{Area}(c_2)
 \end{equation*}
\end{lemma}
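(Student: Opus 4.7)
The approach is to start from the determinant form of the signed area in \eqref{eq:def_signed_area}, namely $2\,\text{Area}(c) = \int_{\Sp^1}\det(c(\theta),c'(\theta))\,d\theta$, and split the integration domain at $\theta_1,\theta_2$. A preliminary observation is that \eqref{eq:def_signed_area} is invariant under orientation-preserving Lipschitz reparametrization (it agrees with the contour integral $\frac{1}{2}\oint(x\,dy - y\,dx)$, which depends only on the oriented image of the curve), so I may freely choose convenient Lipschitz parametrizations $\Sp^1 \to \C$ for $c_1$ and $c_2$.

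I would parametrize $c_1$ so that it coincides with $c$ on $\Sp^1\setminus[\theta_1,\theta_2]$ and with $\gamma$ on $[\theta_1,\theta_2]$, and parametrize $c_2$ so that it traverses $c$ on $[\theta_1,\theta_2]$ followed by $\check{\gamma}$ on the complement; the endpoint matching conditions in the statement guarantee that both constructions yield closed Lipschitz curves in $\mathcal{C}$. Applying \eqref{eq:def_signed_area} and the additivity of the integral then yields
\begin{align*}
2\,\text{Area}(c_1) &= \int_{\Sp^1\setminus[\theta_1,\theta_2]} \det(c,c')\,d\theta \;+\; \int_{[\theta_1,\theta_2]} \det(\gamma,\gamma')\,d\theta, \\
2\,\text{Area}(c_2) &= \int_{[\theta_1,\theta_2]} \det(c,c')\,d\theta \;+\; \int_{\check{I}} \det(\check{\gamma},\check{\gamma}')\,d\theta,
\end{align*}
where $\check{I}$ denotes the parameter interval on which $\check{\gamma}$ is defined.

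Summing, the $c$-terms recombine to $\int_{\Sp^1}\det(c,c')\,d\theta = 2\,\text{Area}(c)$, so the result reduces to the cancellation of the $\gamma$ and $\check{\gamma}$ contributions. Writing $\check{\gamma}$ as $\gamma$ composed with the affine orientation-reversing change of variable $\theta \mapsto \theta_1+\theta_2-\theta$, the chain rule produces one sign in $\check{\gamma}'$ and the Jacobian of the substitution produces another sign in the integral, so that $\int_{\check{I}} \det(\check{\gamma},\check{\gamma}')\,d\theta = -\int_{[\theta_1,\theta_2]}\det(\gamma,\gamma')\,d\theta$, and the two auxiliary terms indeed cancel. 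I do not anticipate a serious obstacle: the only care needed is the Lipschitz-parametrization bookkeeping to ensure that each piece lies in $\mathcal{C}$, after which the argument reduces to a one-line splitting of the integral combined with the classical orientation-reversal identity for the signed area.
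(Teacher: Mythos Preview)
Your proposal is correct and follows essentially the same approach as the paper: split the determinant integral in \eqref{eq:def_signed_area} at $\theta_1,\theta_2$, and use the orientation-reversal identity $\int \det(\check{\gamma},\check{\gamma}')\,d\theta = -\int \det(\gamma,\gamma')\,d\theta$ (with $\check{\gamma}(\theta)=\gamma(\theta_1+\theta_2-\theta)$) so that the $\gamma$-contributions cancel. The only cosmetic difference is that the paper starts from $2\,\text{Area}(c)$ and inserts $\pm\int_{\theta_1}^{\theta_2}\det(\gamma,\gamma')\,d\theta$ to regroup into $2\,\text{Area}(c_1)+2\,\text{Area}(c_2)$, whereas you sum $\text{Area}(c_1)+\text{Area}(c_2)$ directly; the computations are identical.
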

\begin{proof}
This is just a direct verification from the definition \eqref{eq:def_signed_area}. Indeed, we have $\check{\gamma}(\theta) = \gamma(\theta_1 + \theta_2 -\theta)$ for $\theta \in [\theta_1,\theta_2]$ and:
\begin{align*}
 &2\text{Area}(c) = \int_{\Sp^1\backslash [\theta_1, \theta_2]} \det(c(\theta),c'(\theta)) d\theta + \int_{\theta_1}^{\theta_2} \det(c(\theta),c'(\theta)) d\theta \\
 &= \int_{\Sp^1\backslash [\theta_1, \theta_2]} \det(c(\theta),c'(\theta)) d\theta + \int_{\theta_1}^{\theta_2} \det(\gamma(\theta),\gamma'(\theta)) d\theta + \int_{\theta_1}^{\theta_2} \det(c(\theta),c'(\theta)) d\theta - \int_{\theta_1}^{\theta_2} \det(\gamma(\theta),\gamma'(\theta)) d\theta \\
 &= \underbrace{\int_{\Sp^1\backslash [\theta_1, \theta_2]} \det(c(\theta),c'(\theta)) d\theta + \int_{\theta_1}^{\theta_2} \det(\gamma(\theta),\gamma'(\theta)) d\theta}_{=2\text{Area}(c_1)} + \underbrace{\int_{\theta_1}^{\theta_2} \det(c(\theta),c'(\theta)) d\theta + \int_{\theta_1}^{\theta_2} \det(\check{\gamma}(\theta),\check{\gamma}'(\theta)) d\theta}_{=2\text{Area}(c_2)} \\
\end{align*}
which leads to the result.
\end{proof}

We also have the following well-known approximation property of the signed area:
\begin{lemma}
 \label{lemma:approx_polygons}
 Let $c \in \mathcal{C}$. There exists a sequence $(p_n)$ of polytopes (i.e. piecewise linear curves) in $\mathcal{C}$ such that $\text{Area}(p_n) \rightarrow \text{Area}(c)$ and $\mu_{p_n}$ converges weakly to $\mu_c$.  
\end{lemma}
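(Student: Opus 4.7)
My plan is to take $p_n$ as the piecewise-linear interpolation of $c$ on a refining sequence of partitions of $\Sp^1$. Fix partitions $0 = \theta_{n,0} < \theta_{n,1} < \ldots < \theta_{n,k_n} = 2\pi$ whose mesh $h_n \doteq \max_j(\theta_{n,j+1} - \theta_{n,j})$ tends to $0$, and let $p_n \in \text{Lip}(\Sp^1, \C)$ be the unique map that coincides with $c$ at the nodes and is affine on each $[\theta_{n,j},\theta_{n,j+1}]$. After perturbing nodes by an arbitrarily small amount if needed, the vertices $c(\theta_{n,j})$ may be assumed pairwise distinct, so each edge has positive length, $p_n \in \mathcal{C}$, and $\text{Lip}(p_n) \leq \text{Lip}(c)$ uniformly in $n$. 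One also immediately has $\|p_n - c\|_\infty \leq 2\,\text{Lip}(c)\, h_n \to 0$, so $p_n \to c$ uniformly.

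The central technical step is to establish $p_n'(\theta) \to c'(\theta)$ for almost every $\theta \in \Sp^1$. On each open interval $(\theta_{n,j}, \theta_{n,j+1})$ the derivative of $p_n$ is the constant chord slope $(c(\theta_{n,j+1}) - c(\theta_{n,j}))/(\theta_{n,j+1} - \theta_{n,j})$. At any point $\theta$ at which $c$ is differentiable, write $\theta_{n,j} = \theta - a_n$ and $\theta_{n,j+1} = \theta + b_n$ with $a_n, b_n \geq 0$ and $a_n + b_n \to 0$; then this chord slope is the convex combination
\begin{equation*}
p_n'(\theta) = \frac{b_n}{a_n+b_n} \cdot \frac{c(\theta+b_n) - c(\theta)}{b_n} + \frac{a_n}{a_n+b_n} \cdot \frac{c(\theta) - c(\theta-a_n)}{a_n}
\end{equation*}
of two one-sided difference quotients, both of which converge to $c'(\theta)$. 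Since $c$ is Lipschitz and hence differentiable a.e., this proves the claim, and together with the uniform Lipschitz bound on $(p_n)$ Proposition \ref{prop:convergence_length_meas} yields $\mu_{p_n} \rightharpoonup \mu_c$.

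For the area convergence I would apply Lebesgue's dominated convergence theorem to the integral formula \eqref{eq:def_signed_area}: the pointwise convergences $p_n' \to c'$ a.e.\ and $p_n \to c$ uniformly give $\det(p_n(\theta), p_n'(\theta)) \to \det(c(\theta), c'(\theta))$ for a.e.\ $\theta$, while the integrand is uniformly bounded by $(\|c\|_\infty + 1)\,\text{Lip}(c)$ for $n$ large, so $\text{Area}(p_n) \to \text{Area}(c)$. The main point that requires actual care is the convex-combination argument establishing the a.e.\ convergence of the chord slopes to $c'$; a secondary nuisance is ensuring $p_n \in \mathcal{C}$, which could fail if $c$ self-intersects at a chosen node and produces a degenerate edge, but this is resolved by the small perturbation of the partition noted above without affecting either $h_n \to 0$ or the pointwise convergence of derivatives.
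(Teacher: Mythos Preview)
Your proof is correct and takes a genuinely different route from the paper's. The paper approximates $c'$ in $L^1(\Sp^1,\C)$ by step functions $\rho_n$ and sets $p_n(\theta)=\int_0^\theta \rho_n$; it then invokes the converse of Lebesgue's dominated convergence theorem to pass to a subsequence along which $\rho_n \to c'$ a.e.\ with an $L^1$ dominating function $h$, and applies Proposition \ref{prop:convergence_length_meas} and dominated convergence for the area exactly as you do. Your construction instead uses inscribed polygons (piecewise-linear interpolation at nodes), which is the same geometric idea as in Proposition \ref{prop:convergence_polygons}, but you replace the Taylor/mean-value step there---which requires $c$ to be piecewise $C^2$---by the convex-combination identity for the chord slope, valid at any point of classical differentiability of a Lipschitz map. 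This is the genuinely new ingredient in your argument, and it is sound: both one-sided quotients tend to $c'(\theta)$ since $a_n,b_n\le h_n\to 0$, and a convex combination of two sequences with the same limit has that limit. Your approach buys two simplifications: the uniform bound $|p_n'|\le \text{Lip}(c)$ is automatic (so the dominating function for the area is a constant rather than an abstract $h\in L^1$), and there is no need for the converse dominated convergence theorem or a subsequence extraction. The paper's approach, on the other hand, sidesteps the node-perturbation nuisance and does not rely on pointwise differentiability of $c$ at all, only on $L^1$ density of step functions. Both feed into Proposition \ref{prop:convergence_length_meas} in the same way.
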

% \begin{proof}
%  If $c$ is of class $C^2$, then as stated in Proposition \ref{prop:convergence_polygons}, we can construct a sequence $(p_n)$ of polygonal curves such that $\mu_{p_n} \rightharpoonup \mu_c$. In addition, the proof of the Proposition shows that we also have $p_n'(\theta)$ converges to $c'(\theta)$ almost everywhere on $\Sp^1$ and there exists $M>0$ such that $|p_n'(\theta)|\leq M$ for almost all $\theta$ and all $n \in \mathbb{N}$. By invariance of the area and tangent length measures to translations, we can also assume that $c(0)=p_n(0)=0$ giving that $p_n(\theta) = \int_0^{\theta} p_n'(\alpha) d\alpha$ converges to $\int_0^{\theta} c'(\theta) = c(\theta)$ for all $\theta$. Then by Lebesgue dominated convergence theorem, we find that:
%  \begin{equation*}
%   \text{Area}(p_n) = \frac{1}{2} \int_{\Sp^1} \det(p_n(\theta),p_n'(\theta)) d\theta \xrightarrow[n\rightarrow +\infty]{} \frac{1}{2} \int_{\Sp^1} \det(c(\theta),c'(\theta)) d\theta = \text{Area}(c).
%  \end{equation*}
%  For a general curve $c \in \mathcal{C}$, using the density of smooth functions $\Sp^1 \rightarrow \C$ in $\mathcal{C}$, 
% \end{proof}
\begin{proof}
 Let $c \in \mathcal{C}$ and by invariance to translation, let's also assume that $c(0)=0$. Then since $c$ is Lipschitz regular, we have $c' \in L^{\infty}(\Sp^1,\C)\subset L^1(\Sp^1,\C)$. Using the density of step functions in $L^1$, we can construct a sequence $(\rho_n)_{n \in \mathbb{N}}$ of step functions such that $\|\rho_n - c'\|_{L^1} \rightarrow 0$ as $n\rightarrow +\infty$. By the converse of Lebesgue's dominated convergence theorem (\cite{brezis2010functional} Theorem 4.9), up to extraction of a subsequence, we can assume that $\rho_n(\theta)$ converges to $c'(\theta)$ almost everywhere in $\Sp^1$ and that there exists $h \in L^1(\Sp^1)$ such that $\rho_n(\theta) \leq h(\theta)$ for almost all $\theta \in \Sp^1$. Let us then define $p_n = \int_0^{\theta} \rho_n(\alpha) d\alpha$ which is a piecewise linear curve in $\mathcal{C}$ such that:
 \begin{align*}
  |p_n(\theta) - c(\theta)| \leq \int_{0}^{\theta} |\rho_n(\alpha) - c'(\alpha)| d\alpha \leq \int_{\Sp^1} |\rho_n(\alpha) - c'(\alpha)| d\alpha = \|\rho_n - c'\|_{L^1}
 \end{align*}
showing that $\|p_n-c\|_{L^\infty} \rightarrow 0$ as $n\rightarrow +\infty$ and as a consequence there exists $M>0$ such that $\|p_n\|_{L^\infty} \leq M$ for all $n$. Now, by Proposition \ref{prop:convergence_length_meas}, we deduce that $\mu_{p_n}$ weakly converges to $\mu_c$. Furthermore, for any $n\in \mathbb{N}$, we have:
\begin{equation*}
 \text{Area}(p_n) = -\frac{1}{2} \int_{\Sp^1} \langle p_n(\theta),N_{p_n}(\theta) \rangle |p_n'(\theta)| d\theta = -\frac{1}{2} \int_{\Sp^1} \langle p_n(\theta),N_{p_n}(\theta) \rangle |\rho_n(\theta)| d\theta
\end{equation*}
and for almost all $\theta \in \Sp^1$, $p_n(\theta)\rightarrow c(\theta)$, $\rho_n(\theta) \rightarrow c'(\theta)$, $N_{p_n}(\theta) = R_{\pi/2}(\rho_n(\theta)/|\rho_n(\theta)|) \rightarrow N_c(\theta)$ as $n\rightarrow +\infty$. In addition, 
\begin{equation*}
 |\langle p_n(\theta),N_{p_n}(\theta) \rangle| |\rho_n(\theta)| \leq |p_n(\theta)| |\rho_n(\theta)| \leq M h(\theta).
\end{equation*}
As $h \in L^1(\Sp^1)$, Lebesgue dominated convergence theorem leads to:
\begin{equation*}
 \text{Area}(p_n) \xrightarrow[n\rightarrow +\infty]{} -\frac{1}{2} \int_{\Sp^1} \langle c(\theta),N_{c}(\theta) \rangle |c'(\theta)| d\theta = \text{Area}(c).
\end{equation*}
\end{proof}
We will also need the following which is a particular case of our main result for polytopes with fixed number of edges, which proof is adapted from the one outlined in \cite{Fary1982,Boroczky1986}. We call a polytope non-degenerate if it does not lie entirely along a single line (in other words if its length measure is not a sum of two oppositely oriented Diracs).
\begin{figure}
    \begin{tabular}{ccc}
    \includegraphics[trim = 10mm 10mm 10mm 0mm ,clip,width=5cm]{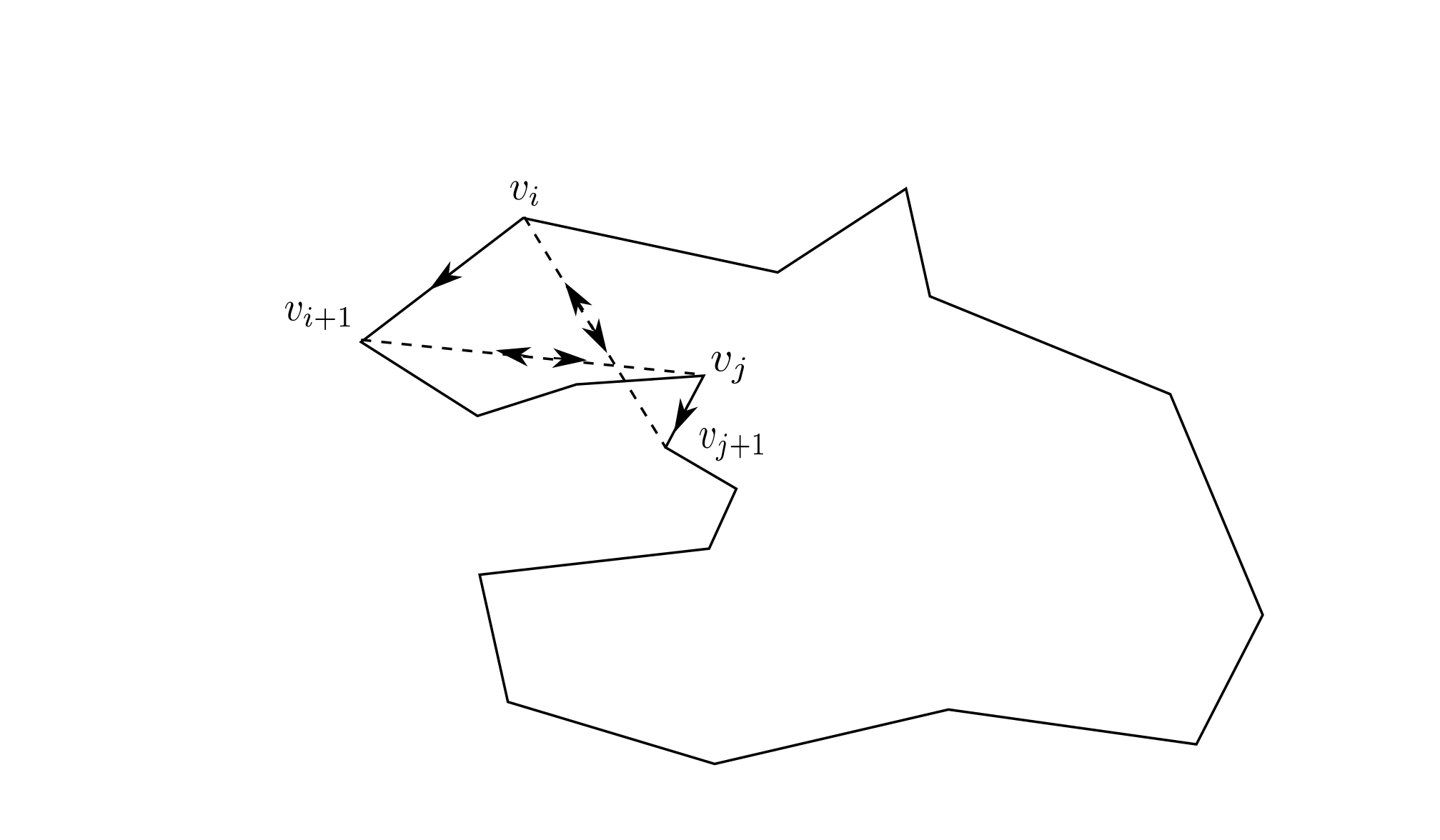} 
    &\includegraphics[trim = 10mm 10mm 10mm 0mm ,clip,width=5cm]{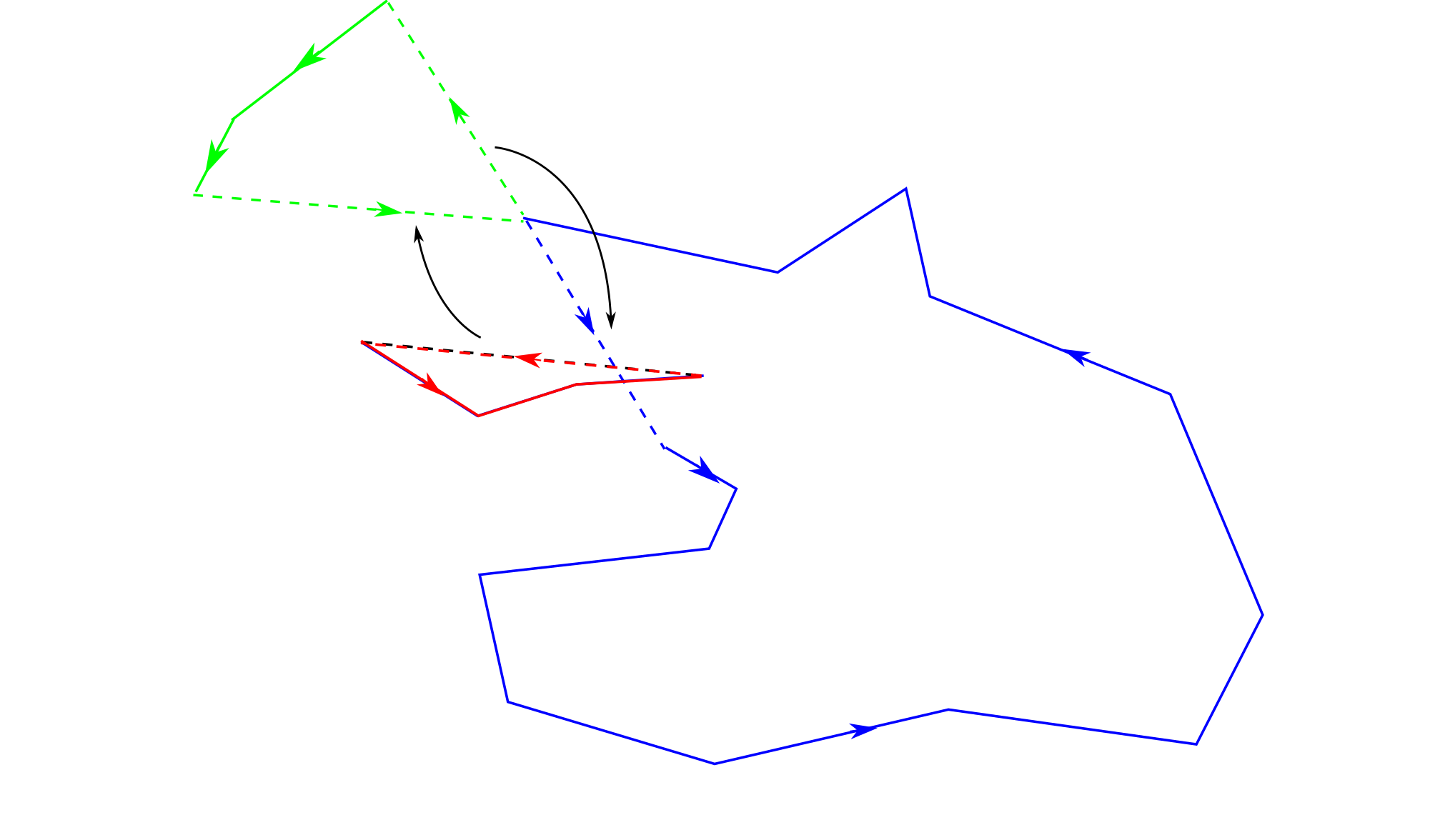}
    &\includegraphics[trim = 10mm 10mm 10mm 0mm ,clip,width=5cm]{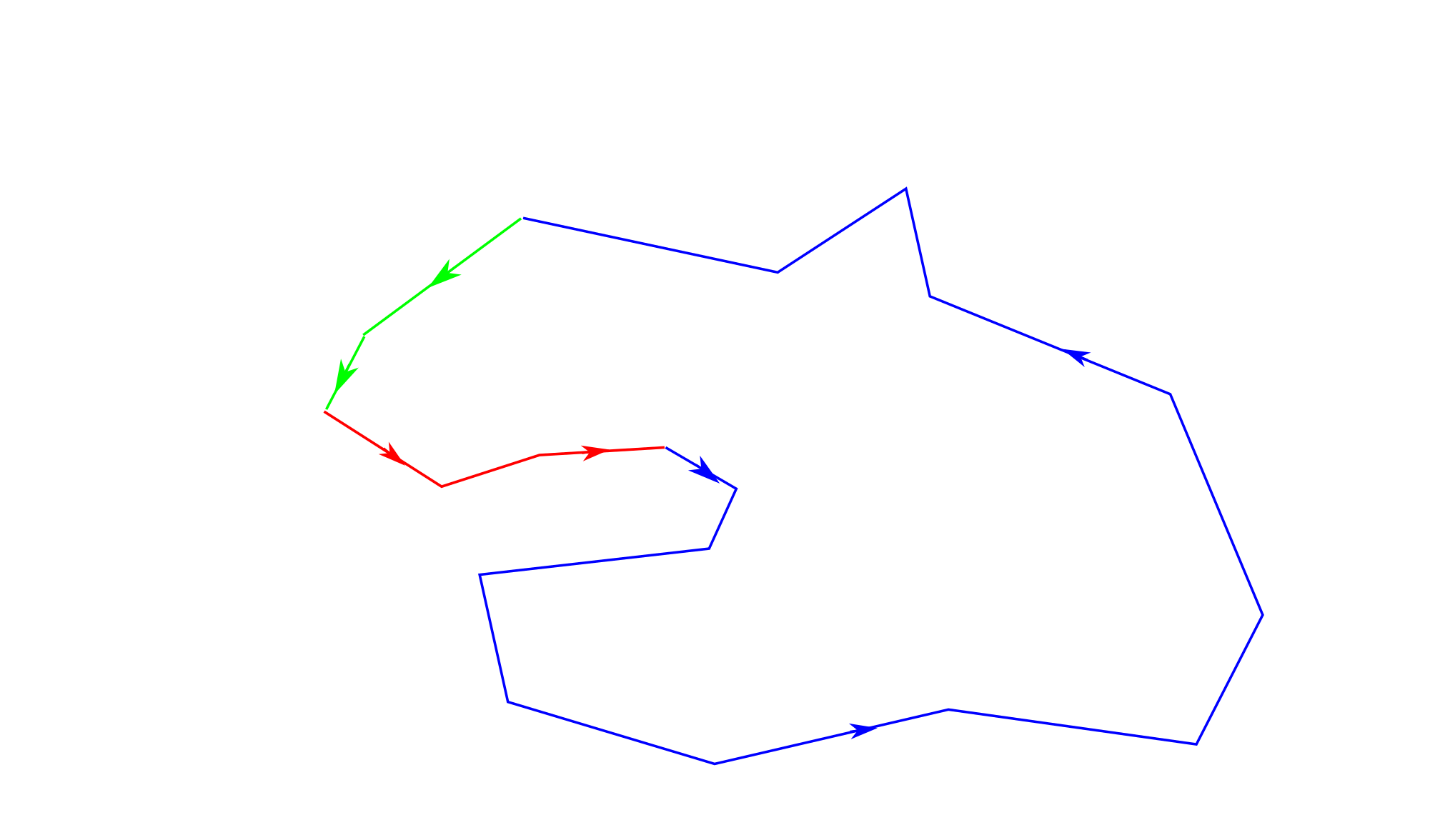}
    \end{tabular}
    \caption{Illustration of the proof of Lemma \ref{lemma:polygon_max_area}: the edges of a non-convex polytope can be rearranged to obtain a polytope of larger signed area.}
    \label{fig:polygon_area}
\end{figure}
\begin{lemma}
 \label{lemma:polygon_max_area}
 Let $p$ be a non-degenerate polytope. Then the convex polygon $\bar{p}$ such that $\mu_{\bar{p}} = \mu_{p}$ satisfies $\text{Area}(p) \leq \text{Area}(\bar{p})$.  
\end{lemma}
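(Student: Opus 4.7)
The plan is to work directly with the combinatorics of edge orderings. A polytope $p$ can be described by its cyclic sequence of edge vectors $w_1,\ldots,w_n \in \C$ with $\sum_k w_k = 0$, and by construction its length measure $\mu_p = \sum_k |w_k|\delta_{w_k/|w_k|}$ depends only on the multiset of edges. Consequently, the set of polytopes in $\tilde{\mathcal{C}}$ with length measure equal to $\mu_p$ is finite: up to translation and cyclic relabeling, it is in bijection with the permutations of the edge sequence. By Theorem \ref{thm:tangent_length_map}, $\bar{p}$ corresponds to the unique ordering that sorts the edges cyclically by angle (cf. Remark \ref{rem:reconstruction_convex_curve}). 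Since the signed area is a real-valued function on this finite set, it attains a maximum, and the goal is to show this maximum equals $\text{Area}(\bar{p})$.

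The key computational input is the effect of a single transposition of two consecutive edges $w_i$ and $w_{i+1}$ in the cyclic sequence. Such a swap only moves the vertex between them and produces a polytope $p'$ with $\mu_{p'}=\mu_p$; from the shoelace representation $2\,\text{Area}(p) = \sum_{k<j} \det(w_k,w_j)$, a direct calculation gives
\begin{equation*}
\text{Area}(p') - \text{Area}(p) = -\det(w_i,w_{i+1}) = -|w_i|\,|w_{i+1}|\,\sin(\alpha_{i+1}-\alpha_i).
\end{equation*}
In particular, at an area-maximizer $p^*$, every consecutive pair must satisfy $\det(w_i^*,w_{i+1}^*)\geq 0$, i.e. the polygonal tangent turns counterclockwise (or is stationary) at every vertex of $p^*$.

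The main obstacle I anticipate is that this local monotonicity is not by itself enough to force convexity: multiply-wound configurations (for example a square traversed twice) also satisfy the local condition while having strictly smaller signed area than the convex polygon sharing their length measure. To rule these out, the plan is to broaden the class of allowed rearrangements to transpositions of \emph{non-consecutive} edges in the cyclic sequence, in the spirit of \cite{Fary1982,Boroczky1986} and as suggested by Figure \ref{fig:polygon_area}. The key intermediate claim is that, whenever $p^*$ is non-convex, there exist two indices $i<j$ such that exchanging $w_i^*$ and $w_j^*$ yields a polytope with strictly greater signed area, contradicting the maximality of $p^*$. The closure relation $\sum_k w_k = 0$ combined with the non-degeneracy hypothesis (which forbids the edge angles from being concentrated on two antipodal points of $\Sp^1$) should be used precisely at this step to prevent the edge angles from lying in a closed half-circle and hence to guarantee the existence of such a beneficial swap.

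Once the two classes of transpositions together rule out every non-convex configuration, the maximizer $p^*$ is forced to be convex, and the injectivity statement of Theorem \ref{thm:tangent_length_map} identifies it with $\bar{p}$. This yields $\text{Area}(p)\leq \text{Area}(p^*) = \text{Area}(\bar{p})$ for every polytope $p$ with length measure $\mu_p$, as claimed.
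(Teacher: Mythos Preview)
Your plan has a concrete gap at exactly the point you flag as ``the key intermediate claim.'' The assertion that any non-convex ordering admits a single beneficial transposition is false, and your own test case---the square traversed twice---already refutes it. Write its edge list as $e_1,e_2,e_3,e_4,e_1,e_2,e_3,e_4$ with $e_k=e^{i(k-1)\pi/2}$. For a swap at positions $i<j$ the shoelace identity gives
\[
\text{Area}(p')-\text{Area}(p)\;=\;\det\!\Big(w_j-w_i,\ \textstyle\sum_{i<k<j}w_k\Big)-\det(w_i,w_j),
\]
and for this edge list one computes the values $-1,-2,-1,0,-1,-2,-1$ as $j-i$ runs from $1$ to $7$ (the $0$ occurring only when $w_i=w_j$). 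Hence \emph{every} single transposition weakly decreases the signed area: the doubly wound square is a strict local maximum for your class of moves, with area $2$ versus $4$ for the convex $\bar p$. Your remark about non-degeneracy preventing the edge directions from lying in a closed half-circle does not touch this obstruction, since here the directions are as spread out as possible.

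The paper escapes this trap by allowing a strictly larger class of rearrangements. It selects four vertices $v_i,v_{i+1},v_j,v_{j+1}$, treats the two chords $\overrightarrow{v_{i+1}v_j}$ and $\overrightarrow{v_{j+1}v_i}$ as edges of an auxiliary quadrilateral, convexifies that quadrilateral via the $N=4$ base case, and then translates the two attached sub-polytopes to their new positions, with Lemma~\ref{lemma:area_deccomp} making the area bookkeeping additive. The net effect is a cyclic permutation of four \emph{blocks} of consecutive edges rather than a swap of two single edges; it is precisely this block-level freedom that breaks multiply-wound local maxima. To salvage the transposition route you would have to enlarge the move set in a comparable way, at which point you are essentially reproducing the paper's decomposition.
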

\begin{proof}
 Let $p$ be a polytope that we represent by its ordered list of vertices $v_1, v_2, \ldots,v_N$. Its corresponding list of edges are denoted by $\mathcal{E}=(e_1,e_2,\ldots,e_N)$ where we assume the directions of the $e_i$ to be consecutively distinct. Recall that, from Theorem \ref{thm:tangent_length_map}, there is a unique convex and positively oriented polygon $\bar{p}$ such that $\mu_{\bar{p}} = \mu_{p}$. Let us define $S_{\mathcal{E}}$ as the set of polygonal curves obtained by all the different permutations of the edges in $\mathcal{E}$. Note that for any polygonal curve $\tilde{p} \in S_{\mathcal{E}}$, we have $\mu_{\tilde{p}} = \mu_{p}$.  Let us show that:
 \begin{equation}
 \label{eq:max_area_pbar}
  \text{Area}(\bar{p}) = \max_{\tilde{p} \in S_{\mathcal{E}}} \ \ \text{Area}(\tilde{p})
 \end{equation}
which will imply in particular that $\text{Area}(p) \leq \text{Area}(\bar{p})$. Since $S_{\mathcal{E}}$ is finite, the maximum is attained: let $\hat{p}$ be a curve in  $S_{\mathcal{E}}$ of maximal signed area. By contradiction, assume that $\hat{p}$ is not the convex positively oriented polygon $\bar{p}$ (modulo translations). 

Let us first consider the trivial cases. When $N=3$, there are only two triangles up to translation in $S_{\mathcal{E}}$ and thus $\hat{p}$ is the negatively oriented one which signed are is clearly strictly smaller than the one of $\bar{p}$. For $N=4$, the polytope $\hat{p}$ being distinct from $\bar{p}$, one can see that, up to a cyclic permutation of its edges, we can assume without loss of generality that $\det(e_1,e_2) < 0$, i.e. the angle is decreasing between the first and second edge. Now the signed area \eqref{eq:def_signed_area} of the piecewise linear curve $\hat{p}$ can be written more simply as (cf \cite{Younes2019} Chap 1.10):
\begin{equation*}
 \text{Area}(\hat{p}) = \frac{1}{2} \sum_{k=1}^{N} \det(\overrightarrow{v v_k},e_k)
\end{equation*}
where $v_1=O, v_2=e_1, v_3=e_1 + e_2, v_4 =e_1+e_2+e_3$ are the consecutive vertices of $\hat{p}$ and $v$ is any reference point in the plane. The above expression is independent of the choice of this reference point, thus choosing $v=v_1$, we obtain after simplifications:
\begin{equation*}
 \text{Area}(\hat{p}) = \frac{1}{2} [\det(e_1,e_2) + \det(e_1,e_3) + \det(e_2,e_3)].
\end{equation*}
Then we can define the quadrilateral $\tilde{p}$ in which the ordering of the edges $e_1$ and $e_2$ is switched. This is still a polytope of $S_{\mathcal{E}}$ and we have:
\begin{align*}
 \text{Area}(\tilde{p}) &= \frac{1}{2} [\det(e_2,e_1) + \det(e_2,e_3) + \det(e_1,e_3)] \\
 &=\frac{1}{2} [-\det(e_1,e_2) + \det(e_1,e_3) + \det(e_2,e_3)] \\
 &>\frac{1}{2} [\det(e_1,e_2) + \det(e_1,e_3) + \det(e_2,e_3)] = \text{Area}(\hat{p}).
\end{align*}
which contradicts the fact that $\text{Area}(\hat{p})$ is maximal.

%For $N=4$, there are in total $6$ quadrilaterals in $S_{\mathcal{E}}$. Since the signed area is invariant to rotation and translation and behaves linearly with respect to rescaling, we can assume without loss of generality that $v_1=0$, $v_2=1$ so $\vec{e}_1 = 1$. Assume that the other edges are $e_2=r_2e^{i\theta_2}$, $e_3 = r_3e^{i\theta_3}$ and $e_4 = -(e_1+e_2+e_3)$.

Let us now assume $N\geq 5$. Then, by the assumption on $\hat{p}$, we can find four vertices $v_i,v_{i+1},v_j,v_{j+1}$ ($i<j-1$) such that the corresponding quadrilateral with edges $\overrightarrow{v_i v_{i+1}},\overrightarrow{v_{i+1} v_j},\overrightarrow{v_j v_{j+1}},\overrightarrow{v_{j+1} v_i}$ is not both convex and positively oriented. Let us write $p_1$ this quadrilateral and introduce in addition the polytope $p_2$ with successive vertices $v_1,\ldots,v_i,v_{j+1},v_{j+2},\ldots,v_N$ and $p_3$ the polytope with vertices $v_{i+1},v_{i+2},\ldots v_j$. This amounts in decomposing the original polygon into three distinct cycles as depicted in Figure \ref{fig:polygon_area} (left). By the additivity property of the signed area from Lemma \ref{lemma:area_deccomp}, this implies that $\text{Area}(\hat{p}) = \text{Area}(p_1) + \text{Area}(p_2) + \text{Area}(p_3)$. Now, from the case $N=4$ treated above, we can find a permutation of the edges of $p_1$ giving the convex positively oriented quadrilateral $\bar{p}_1$ which satisfies $\text{Area}(\bar{p}_1) > \text{Area}(p_1)$. We then obtain the three polytopes $\bar{p}_1$, $p_2$ and $p_3$ shown respectively in green, blue and red in the example of Figure \ref{fig:polygon_area}. By translation of $p_3$, we can superpose the edge $\overrightarrow{v_{i+1} v_j}$ in $\bar{p}_1$ with the edge $\overrightarrow{v_j v_{i+1}}$ in $p_3$. Similarly, by translation of $p_1$, we match the edge $\overrightarrow{v_{j+1} v_i}$ of $p_1$ on the edge $\overrightarrow{v_i v_{j+1}}$ of $p_2$. Then, removing the trivial back and forth edges resulting from this superposition, one obtains a new polytope $\tilde{p}$ as shown in the right image in Figure \ref{fig:polygon_area}. By construction, this polytope has the same list of edge vectors as $\hat{p}$ and thus belongs to $S_{\mathcal{E}}$. Moreover, its signed area is:
\begin{equation*}
 \text{Area}(\tilde{p}) = \text{Area}(\bar{p}_1) + \text{Area}(p_2) + \text{Area}(p_3) > \text{Area}(p_1) + \text{Area}(p_2) + \text{Area}(p_3) = \text{Area}(\hat{p})
\end{equation*}
which contradicts the maximality of $\text{Area}(\hat{p})$ among polytopes of $S_{\mathcal{E}}$.
\end{proof}
The convex polygon $\bar{p}$ is sometimes referred to as the \textit{convexification} of $p$ (which is distinct from the convex hull of $p$). We now state and prove the main result of this section.
\begin{theorem}
\label{thm:max_area_convex}
Let $\mu \in \mathcal{M}^+_0$ such that the support of $\mu$ is not of the form $\{\theta,\theta+\pi\}$ for some $\theta \in \Sp^1$. Among all curves in $M^{-1}(\{\mu\})$, the convex positively oriented curve is the unique maximum of the signed area and this maximum is given by:
\begin{equation}
\label{eq:expression_max_area}
A_{max}(\mu) = \frac{1}{4} \int_0^{2\pi} \int_0^{2\pi} \sin |\theta - \alpha| d\mu(\theta) d\mu(\alpha).
\end{equation}
\end{theorem}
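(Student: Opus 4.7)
The plan is to establish the equality and the inequality separately: first, compute the signed area of the unique convex positively oriented representative $\bar c$ provided by Theorem~\ref{thm:tangent_length_map}, confirming the closed-form expression on the right-hand side of~\eqref{eq:expression_max_area}; then, show that $\text{Area}(c) \leq \text{Area}(\bar c)$ for every other $c \in M^{-1}(\{\mu\})$, with equality characterizing $\bar c$ modulo translation.

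For the explicit formula, I would parametrize $\bar c$ by arc length on $[0, L]$ with $L = \mu(\Sp^1)$, so that (after suitable rescaling in the proof of Theorem~\ref{thm:tangent_length_map}) one has $\bar c'(s) = e^{i F_\mu^{(-1)}(s)}$. Translating so that $\bar c(0) = 0$ and inserting $\bar c(s) = \int_0^s \bar c'(u)\,du$ into~\eqref{eq:def_signed_area}, Fubini's theorem gives
\begin{equation*}
  \text{Area}(\bar c) = \tfrac{1}{2}\int_0^L \int_0^s \det\bigl(\bar c'(u), \bar c'(s)\bigr)\,du\,ds = \tfrac{1}{2}\int_0^L \int_0^s \sin\bigl(F_\mu^{(-1)}(s) - F_\mu^{(-1)}(u)\bigr)\,du\,ds.
\end{equation*}
Because $F_\mu^{(-1)}$ is nondecreasing, the integrand equals $\sin\lvert F_\mu^{(-1)}(s) - F_\mu^{(-1)}(u)\rvert$ on $\{u < s\}$; symmetrizing in $(u, s)$ and then pushing forward via $(F_\mu^{(-1)})_*\lambda^1_{[0,L]} = \mu$ transforms this into the right-hand side of~\eqref{eq:expression_max_area}.

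For the inequality, I would fix $c \in M^{-1}(\{\mu\})$ and invoke Lemma~\ref{lemma:approx_polygons} to produce polytopes $p_n$ with $\mu_{p_n} \rightharpoonup \mu$ and $\text{Area}(p_n) \to \text{Area}(c)$; the hypothesis on $\text{supp}(\mu)$ ensures these $p_n$ are eventually non-degenerate, so Lemma~\ref{lemma:polygon_max_area} yields $\text{Area}(p_n) \leq \text{Area}(\bar p_n)$, where $\bar p_n$ denotes the convexification of $p_n$ (satisfying $\mu_{\bar p_n} = \mu_{p_n}$). The first-step formula applied to each convex polygon $\bar p_n$ gives
\begin{equation*}
  \text{Area}(\bar p_n) = \tfrac{1}{4}\int_0^{2\pi}\!\!\int_0^{2\pi} \sin\lvert \theta - \alpha\rvert \,d\mu_{p_n}(\theta)\,d\mu_{p_n}(\alpha).
\end{equation*}
Since $\mu$ has at most countably many atoms, I may identify $\Sp^1 \simeq [0, 2\pi)$ so that the cut point is not an atom of $\mu$; then $(\theta, \alpha) \mapsto \sin\lvert \theta - \alpha\rvert$ is bounded with discontinuity set of $\mu\otimes\mu$-measure zero, and from $\mu_{p_n}\otimes\mu_{p_n} \rightharpoonup \mu\otimes\mu$ the Portmanteau theorem gives $\text{Area}(\bar p_n) \to A_{\max}(\mu)$. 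Passing to the limit in $\text{Area}(p_n) \leq \text{Area}(\bar p_n)$ produces $\text{Area}(c) \leq A_{\max}(\mu)$, with equality in the case $c = \bar c$ by the first step.

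Finally, for uniqueness, the main obstacle is to upgrade the chain of non-strict inequalities above to a strict one whenever $c$ differs from $\bar c$ modulo translation. The proof of Lemma~\ref{lemma:polygon_max_area} actually shows that every adjacent edge inversion in a non-convex non-degenerate polytope strictly increases the signed area; the cleanest way I see to turn this into uniqueness is to combine Hausdorff convergence $\bar p_n \to \bar c$ (via Proposition~\ref{prop:convergence_convex_polygons}) with the uniform convergence $\lVert p_n - c\rVert_{L^\infty} \to 0$ from the proof of Lemma~\ref{lemma:approx_polygons} and argue that vanishing of the area gap forces the edge orderings of $p_n$ to asymptotically agree with those of $\bar p_n$, yielding $c = \bar c$. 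A possibly cleaner alternative is to recast the whole inequality as a rearrangement problem for the functional $T \mapsto \int_0^L\!\int_0^s \sin(T(s) - T(u))\,du\,ds$ over measurable $T : [0, L] \to \Sp^1$ with $T_*\lambda^1 = \mu$, whose unique maximizer modulo cyclic shifts is the monotone rearrangement $T = F_\mu^{(-1)}$; this bypasses the polytopal intermediary but requires a dedicated strict rearrangement argument.
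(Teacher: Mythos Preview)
Your derivation of the closed-form~\eqref{eq:expression_max_area} via the arclength parametrization and the pushforward $(F_\mu^{(-1)})_*\lambda^1=\mu$ is correct, and in fact more direct than the paper's route, which establishes the formula first for convex polygons and then passes to the limit. Your proof of the inequality $\text{Area}(c)\le A_{\max}(\mu)$ is also correct and is a reorganization of the paper's Steps~1--3: the paper instead takes a maximizing sequence, replaces it by polytopes, convexifies, and uses Blaschke selection together with Proposition~\ref{prop:convergence_convex_polygons} to show the limit is $\bar c$. Your version bypasses Blaschke selection by computing $\text{Area}(\bar p_n)$ explicitly via the formula and letting the Portmanteau theorem carry the weak convergence; this is a genuine simplification.

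The gap is uniqueness. Neither of your two sketches is close to complete. The first one (``vanishing area gap forces edge orderings of $p_n$ to asymptotically agree with those of $\bar p_n$'') is not a workable argument: the polytopes $p_n$ have an unbounded number of edges, and there is no quantitative lower bound on the area gain from a single edge swap that survives the limit. The second (a strict rearrangement inequality for $T\mapsto \int_0^L\int_0^s \sin(T(s)-T(u))\,du\,ds$) is a reasonable program but, as you acknowledge, it needs an equality-case analysis that you do not provide; note in particular that the integrand is not pointwise monotone in $T(s)-T(u)$, so standard rearrangement lemmas do not apply directly.

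The paper's solution is to work \emph{directly on the continuous curve} $\tilde c$ rather than on its polygonal approximants. If $\tilde c$ is not convex positively oriented, one selects four parameters $\theta_1<\theta_2<\theta_3<\theta_4$ such that the inscribed quadrilateral $Q$ with vertices $\tilde c(\theta_k)$ is non-degenerate and not convex positively oriented. One then decomposes $\tilde c$ into $Q$ plus four ``petal'' cycles via Lemma~\ref{lemma:area_deccomp}, replaces $Q$ by its convexification $\bar Q$ (strictly increasing area by the $N=4$ case inside Lemma~\ref{lemma:polygon_max_area}), and translates the petals to reattach them along the permuted edges of $\bar Q$. The resulting curve still lies in $M^{-1}(\{\mu\})$ and has strictly larger signed area, contradicting maximality. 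This argument is short, avoids any limiting procedure, and is the missing ingredient in your proposal.
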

\begin{proof}
 Let $\mu$ be a measure in $\mathcal{M}^+_0$ satisfying the above assumption on its support. This means that for any curve $c$ in $M^{-1}(\{\mu\})$, the image of $c$ does not lie within a single straight line. Furthermore, we have $\text{Length}(c) = \mu(\Sp^1)\doteq L$. By the standard isoperimetric inequality, this implies that the area of any simple closed curve in $M^{-1}(\{\mu\})$ is bounded by $\frac{L^2}{4\pi}$ and thus the supremum of these areas $A_{max}$ is indeed finite. Let us first show that this maximal area is achieved and by the convex positively oriented curve of $M^{-1}(\{\mu\})$. We let $(c_n)_{n\in \mathbb{N}}$ be a maximizing sequence i.e $c_n \in M^{-1}(\{\mu\})$ and $\text{Area}(c_n) \rightarrow A_{max}$ as $n\rightarrow +\infty$.    
 \vskip1ex
 1. As a first step, we want to replace this maximizing sequence by a sequence of piecewise linear curves. For a fixed $n \in \mathbb{N}$, by Lemma \ref{lemma:approx_polygons}, we can construct a sequence of polytopes $(\tilde{p}_{n,m})_{m\in \mathbb{N}}$ such that $\text{Area}(\tilde{p}_{n,m}) \rightarrow \text{Area}(c_n)$ and $\mu_{\tilde{p}_{n,m}}$ weakly converges to $\mu_{c_n}=\mu$ as $m \rightarrow + \infty$. From this, let us construct a sequence of polytopes $p_n$ such that $\mu_{p_n} \rightharpoonup \mu_c$ and $\text{Area}(p_n) \rightarrow A_{max}$ as $n \rightarrow +\infty$. Indeed, recall that the weak convergence of finite measures of $\Sp^1$ is metrizable, for instance by the bounded Lipschitz distance $d^{BL}(\mu,\nu) \doteq \sup_{f \in \text{Lip}^1(\Sp^1)} |(\mu|f) - (\nu|f)|$ (\cite{villani2008optimal} Chap. 6). Thus, we have for all $n \in \mathbb{N}$, $d^{BL}(\mu_{\tilde{p}_{n,m}},\mu) \rightarrow 0$ as $m \rightarrow +\infty$. Since $\text{Area}(c_n) \rightarrow A_{max}$, for any $n\in \mathbb{N}$, we can find an increasing function $\phi: \mathbb{N} \rightarrow \mathbb{N}$ such that $|\text{Area}(c_{\phi(n)}) - A_{max}|\leq\frac{1}{n}$. In addition, we also obtain an increasing function $\psi: \mathbb{N} \rightarrow \mathbb{N}$ such that for any $n \in \mathbb{N}$, we have $d^{BL}(\mu_{\tilde{p}_{n,\psi(n)}},\mu) \leq 1/n$ and $|\text{Area}(\tilde{p}_{n,\psi(n)}) - \text{Area}(c_n)|\leq 1/n$. We then set $p_n \doteq \tilde{p}_{\phi(n),\psi(\phi(n))}$ which gives on the one hand:
 \begin{equation*}
  d^{BL}(\mu_{\tilde{p}_{n}},\mu) = d^{BL}(\mu_{\tilde{p}_{\phi(n),\psi(\phi(n))}},\mu) \leq \frac{1}{\phi(n)}\leq \frac{1}{n}
 \end{equation*}
and thus $\mu_{p_n} \rightharpoonup \mu$. And on the other hand:
 \begin{align*}
  |\text{Area}(p_n) - A_{max}| &\leq  + |\text{Area}(\tilde{p}_{\phi(n),\psi(\phi(n))}) - \text{Area}(c_{\phi(n)})| + |\text{Area}(c_{\phi(n)}) - A_{max}| \\
  &\leq \frac{1}{\phi(n)} + \frac{1}{n} \leq \frac{2}{n}
 \end{align*}
 and therefore $\text{Area}(p_n) \rightarrow A_{max}$.
 \vskip1ex
 2. Now, using Lemma \ref{lemma:polygon_max_area}, we obtain a sequence of convex, positively oriented polygons $(\bar{p}_n)$ such that $\mu_{\bar{p}_n} = \mu_{p_n} \rightharpoonup \mu$ and for all $n$, $\text{Area}(\bar{p}_n)\geq \text{Area}(p_n)$. Using once again the invariance to translation, we can further assume that each $\bar{p}_n$ passes through the origin. We then point out that the length $L(\bar{p}_n) = \mu_{\bar{p}_n}(\Sp^1) = \mu_{p_n}(\Sp^1)$ converges to $\mu(\Sp^1)<+\infty$ as $n\rightarrow +\infty$ since $\mu_{p_n} \rightharpoonup \mu$. This implies that $L(\bar{p}_n)$ is bounded uniformly in $n$ by some constant $M>0$. If we denote by $\bar{P}_n$ the polygonal domain delimited by $\bar{p}_n$ i.e. $\partial \bar{P}_n = \bar{p}_n$, it is then easy to see that for all $n\in \mathbb{N}$, $\bar{P}_n$ is included in the fixed ball $B(0,M)$. In other words, the sequence $(\bar{P}_n)$ is bounded in the space of compact subsets of the plane equipped with the Hausdorff metric.    
 %Then by the usual isoperimetric inequality, we deduce that $\text{Area}(\bar{p}_n)$ is bounded uniformly by some constant $M_2>0$. This implies that for any $n \in \mathbb{N}$ and $x$ inside the polygon $\bar{p}_n$, $\|x\| \leq 2 \frac{M_2}{M_1}$. In other words, we see that the convex polygons $\bar{p}_n$ are uniformly bounded in the plane.   
 \vskip1ex
 3. We can therefore apply Blaschke selection theorem \cite{Blaschke1916} which allows to assume, up to extraction of a subsequence, that the sequence of convex polygonal domains $\bar{P}_n$ converges in Hausdorff distance to a convex domain $C$ which oriented boundary curve we write $\partial C = c \in \tilde{C}_{conv}$. Then by Proposition \ref{prop:convergence_convex_polygons}, we deduce that $\mu_{\bar{p}_n}$ weakly converges to $\mu_c$. As a consequence, $\mu_c = \mu$ and $c$ is thus the (unique) convex curve associated to $\mu$ given by Theorem \ref{thm:tangent_length_map}. In addition, still from Proposition \ref{prop:convergence_convex_polygons}, we get that $\lambda^2(\bar{P}_n) \rightarrow \lambda^2(C)$ as $n \rightarrow +\infty$ and since the boundary curves $\bar{p}_n$ and $c$ are simple and positively oriented it follows that $\text{Area}(\bar{p}_n) \rightarrow \text{Area}(c)$ as $n \rightarrow +\infty$. Now, since for all $n\in \mathbb{N}$, $\text{Area}(\bar{p}_n)\geq \text{Area}(p_n) \rightarrow A_{max}$, we conclude that $\text{Area}(c) \geq A_{max}$ and therefore $c$ achieves the maximal area among all curves in $M^{-1}(\{\mu\})$.    
 \vskip1ex
\begin{figure}
    \begin{tabular}{ccc}
    \includegraphics[trim = 30mm 30mm 30mm 30mm ,clip,width=7.5cm]{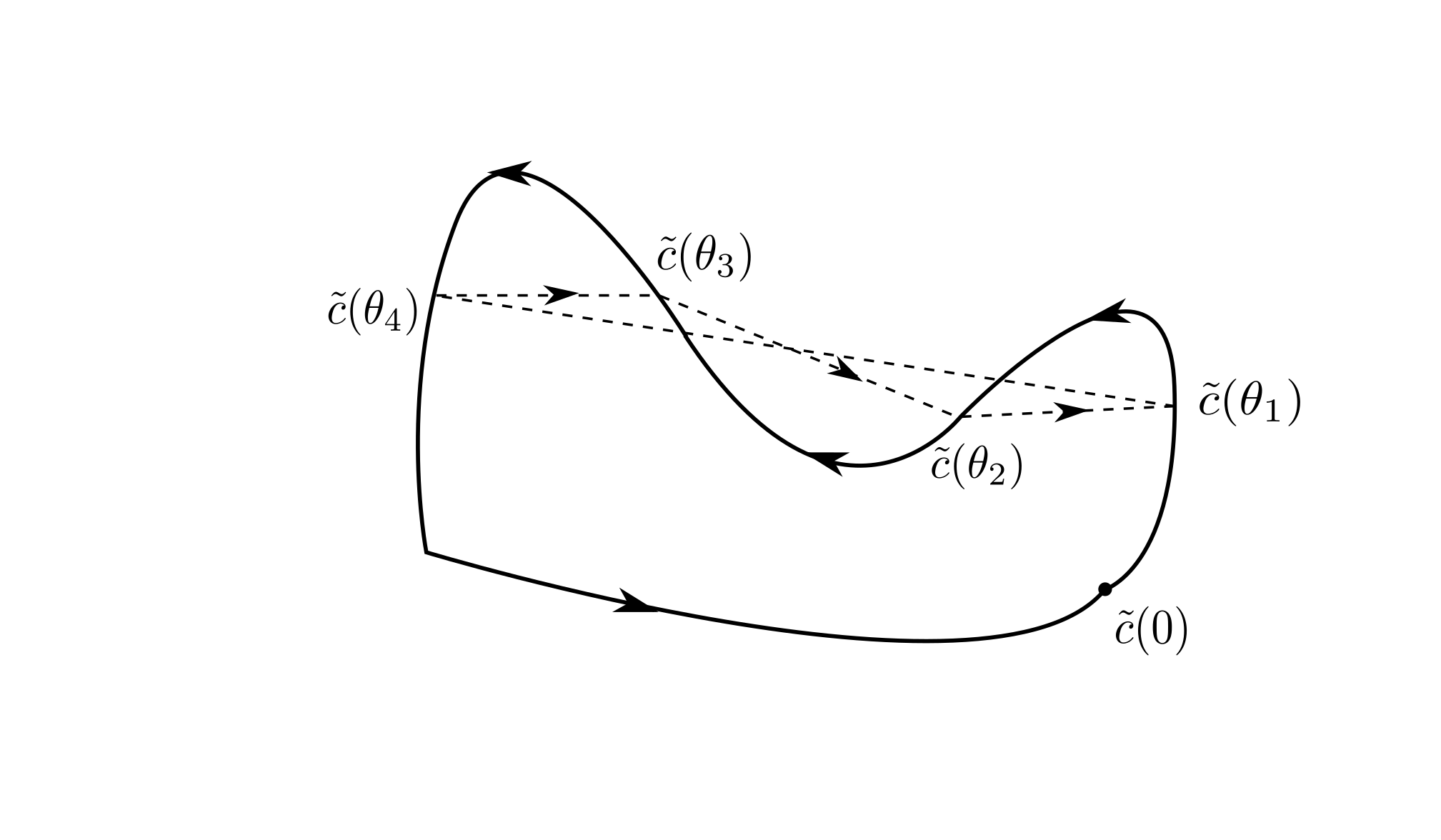} 
    & 
    &\includegraphics[trim = 30mm 30mm 30mm 30mm ,clip,width=7.5cm]{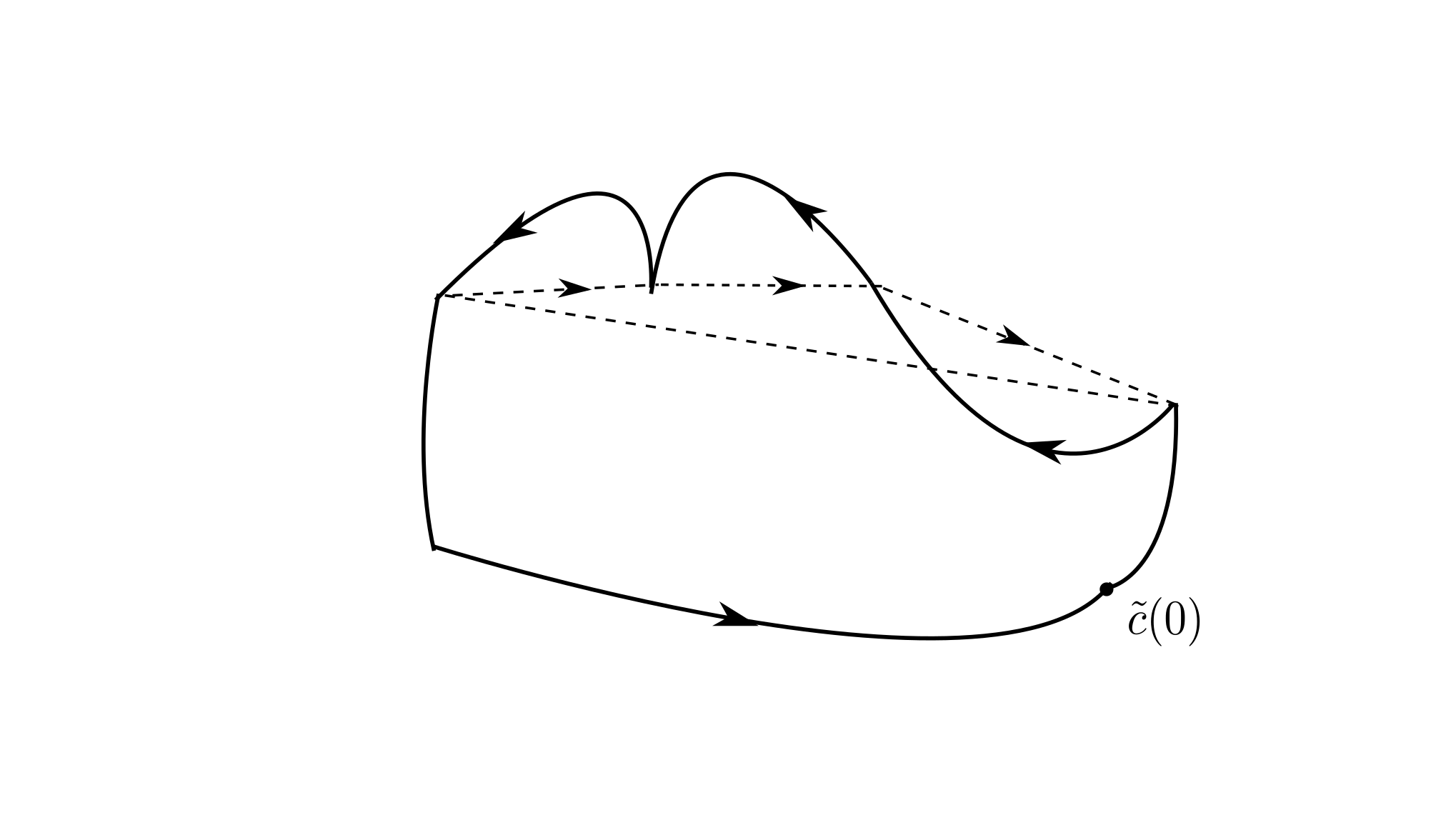}
    \end{tabular}
    \caption{Illustration of step 4 in the proof of Theorem \ref{thm:max_area_convex}. By rearranging the different subcycles in the curve $\tilde{c}$, one obtains a new curve in $M^{-1}(\{\mu\})$ of larger signed area.}
    \label{fig:uniqueness_maximizer}
\end{figure}  
 
 4. The uniqueness of the maximum can be now showed by essentially adapting the argument used in the proof of Lemma \ref{lemma:polygon_max_area} for polytopes. Assume by contradiction that $\tilde{c} \in M^{-1}(\mu)$ is non-convex and that $\text{Area}(\tilde{c})= A_{max}(\mu)$. Then we can find $0 \leq \theta_1 < \theta_2 < \theta_3 < \theta_4 <2\pi$ such that the quadrilateral $Q$ with successive vertices $\tilde{c}(\theta_1),\tilde{c}(\theta_2),\tilde{c}(\theta_3),\tilde{c}(\theta_4)$ is not convex positively oriented and is not degenerate. Let us further introduce $c_1$ defined as the concatenation of the portion of curve $\tilde{c}([0,\theta_1])$, the oriented segment $[c(\theta_1), c(\theta_4)]$ and $\tilde{c}([\theta_4,2\pi])$. In addition, let $c_2$ be the concatenation of $\tilde{c}([\theta_1,\theta_2])$ and the segment $[c(\theta_2), c(\theta_1)]$, $c_3$ the curve obtained by concatenating $\tilde{c}([\theta_2,\theta_3])$ and the segment $[\tilde{c}(\theta_3), \tilde{c}(\theta_2)]$ and $c_4$ the concatenation of $\tilde{c}([\theta_3,\theta_4])$ and $[\tilde{c}(\theta_4), \tilde{c}(\theta_3)]$. See Figure \ref{fig:uniqueness_maximizer} for an illustration. Now from Lemma \ref{lemma:area_deccomp}, we deduce that:
 \begin{equation*}
  \text{Area}(\tilde{c}) = \text{Area}(Q) + \text{Area}(c_1) + \text{Area}(c_2) + \text{Area}(c_3) + \text{Area}(c_4).
 \end{equation*}
 By reordering of the edges in $Q$, we obtain the convexified quadrilateral $\bar{Q}$ which, with the same argument used in the proof of Lemma \ref{lemma:polygon_max_area}, is such that $\text{Area}(Q) < \text{Area}(\bar{Q})$. We can then rearrange the different cycles introduced above accordingly, as shown in Figure \ref{fig:uniqueness_maximizer} (right). This is done by translating $c_1,c_2,c_3,c_4$ to match the corresponding edges in $\bar{Q}$. As a result, we obtain a new closed curve $\bar{c}$ which by construction still belongs to $M^{-1}(\{\mu\})$ since it is just obtained by interchanging sections of $\tilde{c}$. Then applying again Lemma \ref{lemma:area_deccomp}:
 \begin{equation*}
  \text{Area}(\bar{c}) = \text{Area}(\bar{Q}) + \text{Area}(c_1) + \text{Area}(c_2) + \text{Area}(c_3) + \text{Area}(c_4) > \text{Area}(\tilde{c}).
 \end{equation*}
This contradicts the fact that $\text{Area}(\tilde{c})= A_{max}(\mu)$.
 \vskip1ex
 5. Finally, the expression of the area $A_{max}=\text{Area}(c)$ of the convex curve $c$ with respect to its length measure is well-known and can be recovered for example from \cite{Letac1983}, although the definition and presentation of length measures is slightly different than in the present paper. For completeness, we provide a direct proof. It suffices to show \eqref{eq:expression_max_area} for a convex polygon and the general case will directly follow from the above approximation argument. Let $\bar{p}$ be a convex polygon with ordered list of edges $\mathcal{E}=(e_1,e_2,\ldots,e_N)$ and vertices $v_1=O, v_2=e_1,\ldots, v_N =e_1+e_2+\ldots+e_{N-1}$. Its length measure is then $\mu = \sum_{i=1}^{N} l_i \delta_{\alpha_i}$ where $l_i = \|e_i\|$ and $\alpha_i = \frac{e_i}{\|e_i\|} \in \Sp^1$. Since the polygon is convex and positively oriented, the successive angles of the edges are non-decreasing so we may assume without loss of generality that $0 \leq \alpha_1 < \alpha_2 < \ldots < \alpha_N < 2\pi$. Therefore the right hand side of \eqref{eq:expression_max_area} is:
 \begin{equation*}
  \frac{1}{4} \int_0^{2\pi} \int_0^{2\pi} \sin |\theta - \alpha| d\mu(\theta) d\mu(\alpha) = \frac{1}{4} \sum_{i=1}^{N} \sum_{j=1}^{N} l_i l_j \sin |\alpha_i - \alpha_j|.  
 \end{equation*}
By splitting the inner sum into two sums from $j=1,\ldots,i$ and $j=i+1,\ldots,N$, after an easy calculation, we obtain:
 \begin{equation*}
  \frac{1}{4} \int_0^{2\pi} \int_0^{2\pi} \sin |\theta - \alpha| d\mu(\theta) d\mu(\alpha) = \frac{1}{2} \sum_{i=1}^{N} \sum_{j=1}^{i-1} l_i l_j \sin |\alpha_i - \alpha_j| = \frac{1}{2} \sum_{i=1}^{N} \sum_{j=1}^{i-1} l_i l_j \sin (\alpha_i - \alpha_j) 
 \end{equation*}
 since $\alpha_i >\alpha_j$ for all $j=1,\ldots,i-1$. This is in turn leads to:
  \begin{equation}
  \label{eq:proof_area_convex_pol1}
  \frac{1}{4} \int_0^{2\pi} \int_0^{2\pi} \sin |\theta - \alpha| d\mu(\theta) d\mu(\alpha) = \frac{1}{2} \sum_{i=1}^{N} \sum_{j=1}^{i-1} \det(e_j,e_i) = \frac{1}{2} \sum_{i=1}^{N} \det(Ov_i, e_i)
 \end{equation}
 since $\sum_{j=1}^{i-1} e_j = \overrightarrow{Ov_i}$. Now the right hand side of \eqref{eq:proof_area_convex_pol1} is exactly the signed area of $\bar{p}$. 
\end{proof}

Another useful expression of the maximal area $A_{max}(\mu)$ is through the Fourier coefficients of the measure $\mu$. Let us define:
\begin{equation*}
 \hat{\mu}(n) = \frac{1}{2\pi} \int_{\Sp^1} e^{-in\theta} d\mu(\theta). 
\end{equation*}
\begin{prop}
 \label{prop:expression_Amax_Fourier}
 For all $\mu \in \mathcal{M}^+_0$, the following holds:
 \begin{equation}
  A_{max}(\mu) = \frac{1}{4\pi} \sum_{\substack{n \in \mathbb{Z} \\ n\neq \pm 1}} \frac{|\hat{\mu}(n)|^2}{1-n^2}
 \end{equation}
\end{prop}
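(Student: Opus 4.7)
The plan is to expand in Fourier series and apply Parseval, after reducing by approximation to the case of measures with smooth positive density. Both sides of the asserted identity depend continuously on $\mu$ under weak convergence in $\mathcal{M}^+_0$: the left side through \eqref{eq:expression_max_area} since $\sin|\theta-\alpha|$ is a bounded continuous kernel on $[0,2\pi)^2$, and the right side because $|\hat{\mu}(n)|$ is uniformly bounded, so the series is dominated by a fixed summable sequence via the factor $1/|1-n^2|$ and the limit passes by Lebesgue dominated convergence. Convolving $\mu$ with a smooth approximate identity on $\Sp^1$, and if necessary adding a small multiple of the uniform measure to enforce strict positivity, produces a weakly dense family of smooth, strictly positive elements of $\mathcal{M}^+_0$, so it is enough to prove the formula in that smooth setting.

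For such $\mu$ with density $\rho(\theta) = \sum_n \rho_n e^{in\theta}$, Theorem \ref{thm:tangent_length_map} provides the tangent-angle parameterization $c(\theta) = \int_0^\theta e^{i\beta}\rho(\beta)\,d\beta$ of the associated convex curve, which is closed precisely because the constraint $\hat{\mu}(-1) = 0$ kills the $m = 0$ mode of $c'(\theta) = e^{i\theta}\rho(\theta) = \sum_{m \neq 0} \rho_{m-1} e^{im\theta}$. Termwise integration gives the Fourier series of $c$ itself. Since $c$ is closed, $\int_0^{2\pi}\text{Re}(\overline{c}\,c')\,d\theta = \tfrac{1}{2}[|c|^2]_0^{2\pi} = 0$, so the signed-area formula \eqref{eq:def_signed_area} reduces to $\text{Area}(c) = \tfrac{1}{2}\text{Im}\int_0^{2\pi}\overline{c}\,c'\,d\theta$. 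Applying Parseval to the product $\overline{c}\,c'$, with the constant term of $c$ dropping out because it pairs against the vanishing mean of $c'$, produces an expression of the form
\begin{equation*}
A_{max}(\mu) \;=\; C\sum_{m\neq 0}\frac{|\rho_{m-1}|^2}{m}
\end{equation*}
with an explicit constant $C$ determined by the normalization in Parseval's identity.

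The last step is purely algebraic rearrangement. Setting $n = m-1$, splitting positive and negative $n$, and exploiting the reality relation $|\rho_{-n}|=|\rho_n|$ to pair the index $n$ with $-n-2$, one combines $\tfrac{1}{n+1}-\tfrac{1}{n-1}$ into $\tfrac{-2}{n^2-1}$; the indices $n = \pm 1$ drop out because $\rho_{\pm 1} = 0$. This collapses the series onto $\sum_{n\neq\pm 1}|\hat{\mu}(n)|^2/(1-n^2)$, with the overall constant then pinned down by the sanity check $\text{Area} = \pi$ on the unit circle (where only $\hat{\mu}(0)$ is nonzero). The main obstacle is bookkeeping rather than conceptual: the various factors of $2\pi$ flowing between the Fourier convention, Parseval's identity, and the reindexing are easy to miscount, and the raw sum $\sum|\rho_{m-1}|^2/m$ is only conditionally convergent before one invokes the reality symmetry, so the reordering must be justified by the rapid decay of Fourier coefficients coming from the assumed smoothness of $\rho$.
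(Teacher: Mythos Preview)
Your approach is correct and genuinely different from the paper's. The paper never returns to the curve: it stays with the integral formula \eqref{eq:expression_max_area} and computes, for each fixed $\theta$, the Fourier coefficients of $\alpha\mapsto \sin|\theta-\alpha|$, obtaining $\widehat{f_\theta}(n)=\tfrac{1}{\pi(1-n^2)}e^{-in\theta}$ plus terms proportional to $e^{\pm i\theta}$; the latter are killed by the constraint $\hat\mu(\pm1)=0$ when integrated against $\mu$, and what remains collapses immediately to the stated series. No approximation is needed and the argument works uniformly for all $\mu\in\mathcal{M}^+_0$.

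Your route, by contrast, reconstructs the convex curve in its tangent-angle parametrization $c'(\theta)=e^{i\theta}\rho(\theta)$, which forces the preliminary reduction to smooth strictly positive densities. The payoff is conceptual: the formula emerges as a Parseval identity for the area functional on the curve itself, rather than as a computation with the abstract kernel $\sin|\theta-\alpha|$. One small wrinkle in your write-up: the pairing that collapses $\sum_{m\neq0}|\rho_{m-1}|^2/m$ is cleanest if, after setting $n=m-1$ and dropping the vanishing term $n=1$, you pair $n$ with $-n$ (not $-n-2$) via $|\rho_{-n}|=|\rho_n|$, giving $\tfrac{1}{n+1}+\tfrac{1}{1-n}=\tfrac{2}{1-n^2}$ directly. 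Your displayed combination $\tfrac{1}{n+1}-\tfrac{1}{n-1}$ is the same number, so the algebra lands correctly; only the verbal description of which indices get matched is slightly off.
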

\begin{proof}
 Let $\theta \in [0,2\pi)$ be for now fixed and $f_\theta(\alpha) = \sin |\theta-\alpha|$. After some calculations that we skip for the sake of concision, one can show that the Fourier coefficients of $f_\theta$ are given by:
 \begin{equation*}
  \widehat{f_\theta}(n) = \frac{1}{2\pi} \int_{0}^{2\pi} e^{-in\theta} f_\theta(\alpha) d\alpha = \left \lbrace\begin{aligned}
&\frac{1}{\pi(1-n^2)} e^{-i n\theta} + \frac{1}{2\pi} \left(\frac{e^{-i\theta}}{n-1} - \frac{e^{i\theta}}{n+1} \right) \ \ \text{for } n\neq \pm 1   \\
&-\frac{e^{i\theta}}{4\pi} + \left(\frac{1}{4\pi} +i\frac{\theta-\pi}{2\pi} \right) e^{-i\theta}  \ \ \text{for } n=1 \\
&-\frac{e^{-i\theta}}{4\pi} + \left(\frac{1}{4\pi} +i\frac{\pi-\theta}{2\pi} \right) e^{i\theta}  \ \ \text{for } n=-1
\end{aligned}\right.
 \end{equation*}
 Now, writing $f_\theta(\alpha) = \sum_{n\in \mathbb{Z}} \widehat{f_\theta}(n) e^{in\alpha}$ which converges everywhere on $\Sp^1$ and recalling that $\int_{\Sp^1} e^{i\theta} d\mu(\theta) = \int_{\Sp^1} e^{-i\theta} d\mu(\theta)=0$, we get that:
 \begin{align*}
  A_{max}(\mu) = \frac{1}{4} \int_0^{2\pi} \int_0^{2\pi} \sum_{\substack{n \in \mathbb{Z} \\ n\neq \pm 1}} \frac{1}{\pi(1-n^2)} e^{-i n\theta} e^{in\alpha} d\mu(\theta) d\mu(\alpha)
 \end{align*}
and since the series of functions in the above equation is uniformly converging on $\Sp^1 \times \Sp^1$:
 \begin{align*}
  A_{max}(\mu) &= \frac{1}{4} \sum_{\substack{n \in \mathbb{Z} \\ n\neq \pm 1}} \frac{1}{\pi(1-n^2)} \int_0^{2\pi} \left(\int_0^{2\pi} e^{-i n\theta} d\mu(\theta) \right) e^{in\alpha} d\mu(\alpha) \\
  &= \frac{1}{4} \sum_{\substack{n \in \mathbb{Z} \\ n\neq \pm 1}} \frac{1}{\pi(1-n^2)} (2\pi)^2 \hat{\mu}(n) \overline{\hat{\mu}(n)} \\
  &= \pi \sum_{\substack{n \in \mathbb{Z} \\ n\neq \pm 1}} \frac{|\hat{\mu}(n)|^2}{(1-n^2)}
 \end{align*}
\end{proof}
Note that as a direct corollary of Theorem \ref{thm:max_area_convex} and Proposition \ref{prop:expression_Amax_Fourier}, we recover the standard isoperimetric inequality in its general form, that is:
\begin{corollary}
 \label{cor:isoperimetric_inequality}
 For any curve $c$ in $\tilde{\mathcal{C}}$, we have:
 \begin{equation*}
  \text{Area}(c) \leq \frac{L(c)^2}{4\pi}
 \end{equation*}
 with equality if and only if $c$ is a circle. 
\end{corollary}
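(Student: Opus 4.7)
The plan is to combine Theorem \ref{thm:max_area_convex} with the Fourier expression of Proposition \ref{prop:expression_Amax_Fourier} and then read off both the inequality and the equality case.

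First I would handle the degenerate case separately: if $\mu_c$ is supported on $\{\theta_0,\theta_0+\pi\}$, then the image of $c$ lies in a single line, so $\text{Area}(c)=0$ while $L(c)>0$, giving strict inequality (and $c$ is not a circle, as desired). From now on assume $\mu_c$ is not of this form, so Theorem \ref{thm:max_area_convex} applies and $\text{Area}(c)\leq A_{max}(\mu_c)$.

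Next I would expand $A_{max}(\mu_c)$ using Proposition \ref{prop:expression_Amax_Fourier}. The key observation is a term-by-term sign analysis: for $n=0$ one has $\hat{\mu}_c(0) = L(c)/(2\pi)$ so the $n=0$ contribution is exactly $\pi\cdot L(c)^2/(4\pi^2) = L(c)^2/(4\pi)$; the terms $n=\pm 1$ are excluded from the sum (and would vanish anyway since $\mu_c\in\mathcal{M}^+_0$); and for every $|n|\geq 2$ the denominator $1-n^2$ is strictly negative, so each of those summands is $\leq 0$. Therefore
\begin{equation*}
\text{Area}(c)\leq A_{max}(\mu_c) = \frac{L(c)^2}{4\pi} + \pi\sum_{|n|\geq 2}\frac{|\hat{\mu}_c(n)|^2}{1-n^2} \leq \frac{L(c)^2}{4\pi}.
\end{equation*}

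For the equality case, suppose $\text{Area}(c)=L(c)^2/(4\pi)$. Then both inequalities above are equalities. The second forces $\hat{\mu}_c(n)=0$ for all $|n|\geq 2$, and combined with $\hat{\mu}_c(\pm 1)=0$ this leaves only the $n=0$ Fourier mode, so $\mu_c = \frac{L(c)}{2\pi}\,d\theta$ is the uniform measure on $\Sp^1$. The first equality, via the uniqueness part of Theorem \ref{thm:max_area_convex}, forces $c$ itself to be the (unique) convex positively oriented curve in $M^{-1}(\{\mu_c\})$; and by Theorem \ref{thm:tangent_length_map} the convex curve whose length measure is uniform is the circle of radius $L(c)/(2\pi)$. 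Conversely, for a circle direct computation gives $\text{Area}=L^2/(4\pi)$, closing the equivalence.

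The only subtle point I anticipate is making sure that the Fourier representation in Proposition \ref{prop:expression_Amax_Fourier} really does identify the measure from the vanishing of all its non-constant coefficients; this is standard (Fourier coefficients determine a finite Radon measure on $\Sp^1$), but worth a one-line justification. Everything else is an arithmetic sign check on the series, so I expect no serious obstacle.
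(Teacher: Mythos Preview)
Your proposal is correct and follows essentially the same route as the paper's own proof: bound $\text{Area}(c)$ by $A_{max}(\mu_c)$ via Theorem~\ref{thm:max_area_convex}, use the Fourier expansion of Proposition~\ref{prop:expression_Amax_Fourier} to see that the $n=0$ term equals $L(c)^2/(4\pi)$ while all $|n|\geq 2$ terms are non-positive, and then read off the equality case from the vanishing of the higher Fourier modes together with the uniqueness statement in Theorem~\ref{thm:max_area_convex}. You are in fact slightly more careful than the paper in that you treat the degenerate line-supported case separately (needed to invoke Theorem~\ref{thm:max_area_convex}) and explicitly justify that vanishing Fourier coefficients force $\mu_c$ to be the uniform measure.
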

\begin{proof}
 By adequate rescaling, we can first restrict the proof to the case where $L(c) = \mu_c(\Sp^1) = 2\pi$. Then, from Theorem \ref{thm:max_area_convex}, we have that $\text{Area}(c) \leq \sup \{A_{max}(\mu) \ | \ \mu \in \mathcal{M}^+_0, \mu(\Sp^1)=2\pi\}$. Now, for any $\mu \in \mathcal{M}^+_0$ such that $\mu(\Sp^1)=2\pi$, we have $\hat{\mu}(0) = \mu(\Sp^1) = 2\pi$ and:
 \begin{equation*}
  A_{max}(\mu) = \frac{1}{4\pi} \sum_{\substack{n \in \mathbb{Z} \\ n\neq \pm 1}} \frac{|\hat{\mu}(n)|^2}{1-n^2} \leq \frac{1}{4\pi} |\hat{\mu}(0)|^2 = \pi.
 \end{equation*}
Therefore, $\text{Area}(c) \leq \pi = \frac{L(c)^2}{4\pi}$ with equality if and only if $A_{max}(\mu) = \pi$ which implies that in the above we have $\hat{\mu}(n)=0$ for all $|n|\neq 1$. Therefore, $\mu$ is the uniform measure on $\Sp^1$ and as $\text{Area}(c) = A_{max}(\mu)$, by the uniqueness of the maximizer in Theorem \ref{thm:max_area_convex}, we obtain that $c$ is the unit circle.    
\end{proof}

\section{A geodesic space structure on convex sets of the plane} 
\label{sec:metric_measure_convex}
The correspondence between convex curves and length measures that is given by Theorem \ref{thm:tangent_length_map} also suggests the idea of comparing convex sets through their associated length measures. In other words, one can transpose the construction of metrics on the set of convex domains to that of building metrics on the measure space $\mathcal{M}^+_0$. What makes this advantageous is that there are already many existing and well-known distances between measures that can be introduced to that end, and several past works \cite{zouaki2003representation,abdallah2014reconstruction} have exploited this idea for various purposes. Yet most of these works are focused on the computation and/or mathematical properties of the distance only. In the field of interest of the authors, namely shape analysis, an often equally important aspect is to define distances (typically Riemannian or sub-Riemannian) which also lead to relevant geodesics on the shape space: such geodesics indeed provide a way of interpolating between two shapes and are crucial to extend many statistical or machine learning tools to analyze shape datasets. In this section, we will therefore be interested in building numerically computable geodesic distances on $\mathcal{M}^+_0$ which will in turn induce metrics and corresponding geodesics on $\tilde{C}_{conv}$. Note that although we discuss this problem for planar curves as it is the focus of this paper, most of what follows can be adapted to larger dimensions by considering the general area measures of convex domains in $\R^n$. We first review several standard metrics on the space of measures of $\Sp^1$ and analyze their potential shortcomings when it comes to comparing planar convex curves. We then propose a new constrained optimal transport distance.  

\subsection{Kernel metrics}
As a dual space, the space of measures on $\Sp^1$ can be endowed with dual metrics based on a choice of norm on a set of test functions. These include classical measure metrics such as the L\'{e}vy-Prokhorov or the bounded Lipschitz distance. Those distances metrizes the weak convergence between measures and have many appealing mathematical properties. However, they are typically challenging to compute or even approximate in practice. An alternative is the class of metrics derived from reproducing kernel Hilbert spaces (RKHS) which have been widely used in shape analysis \cite{Glaunes2004,durrleman2009statistical,Charon2017} and in statistics \cite{gretton2007kernel,li2015generative}. We briefly recap and discuss such metrics in our context. The starting point is a continuous and positive definite kernel $K:\Sp^1 \times \Sp^1 \rightarrow \R$ to which, by Aronszajn theorem \cite{Aronszajn1950}, corresponds a unique RKHS of functions on $\Sp^1$. Let us denote this space by $\mathcal{H}$ and by $\mathcal{H}^*$ its dual. Now taking $\mathcal{H}$ as our space of test functions, one can introduce the norm $\|\cdot\|_{\mathcal{H}^*}$ on $\mathcal{M}^+_0$ defined by:
\begin{equation}
\label{eq:def_RKHS_distance}
 \|\mu\|_{\mathcal{H}^*} = \sup \{(\mu | f) \ | \ f \in \mathcal{H}, \ \|f\|_{\mathcal{H}} =1 \}
\end{equation}
and the corresponding distance $d_{\mathcal{H}^*}(\mu_0,\mu_1) = \|\mu_1 - \mu_0\|_{\mathcal{H}^*}$. In general, \eqref{eq:def_RKHS_distance} only gives a pseudo-distance on $\mathcal{M}^+_0$ but is shown in \cite{sriperumbudur2011universality} Theorem 6 that a necessary and sufficient condition to recover a true distance is for the kernel $K$ to satisfy a property known as $C_0$-universality, which includes several families of well-known kernels such as Gaussian, Cauchy... An important advantage of this RKHS framework is that, thanks to the reproducing kernel property, the distance can be directly expressed based on $K$ as follows: 
\begin{align*}
 &d_{\mathcal{H}^*}(\mu_0,\mu_1)^2 = \\
 &\iint_{\Sp^1\times \Sp^1} K(\theta,\theta') d\mu_0(\theta) d\mu_0(\theta') -2 \iint_{\Sp^1\times \Sp^1} K(\theta,\theta') d\mu_0(\theta) d\mu_1(\theta') + \iint_{\Sp^1\times \Sp^1} K(\theta,\theta') d\mu_1(\theta) d\mu_1(\theta').
\end{align*}
For discrete measures $\mu_0$ and $\mu_1$, the above integrals become double sums and can be all evaluated in closed form once the kernel $K$ is specified which makes these distances easy to compute in practice. However, since all are essentially dual metrics to some Hilbert space, it is easy to see that the resulting metric space is flat, namely the constant speed geodesic between two positive measures $\mu_0$ and $\mu_1$ on $\Sp^1$ is given by $\mu(t) = (1-t) \mu_0 + t \mu_1$ for $t\in [0,1]$. Furthermore, if both $\mu_0$ and $\mu_1$ belong to $\mathcal{M}^+_0$ then so does $\mu(t)$ for all $t\in [0,1]$ since:
\begin{equation*}
 \int_{\Sp^1} e^{i\theta} d\mu(t)(\theta) = (1-t) \int_{\Sp^1} e^{i\theta} d\mu_0(\theta) + t \int_{\Sp^1} e^{i\theta} d\mu_1(\theta) = 0.
\end{equation*}
Therefore $\mathcal{M}^+_0$ is a totally geodesic space for the metric $\|\cdot\|_{\mathcal{H}^*}$.   

\begin{figure}
    \begin{tabular}{cccc}
    \includegraphics[trim = 25mm 25mm 25mm 25mm ,clip,width=4cm]{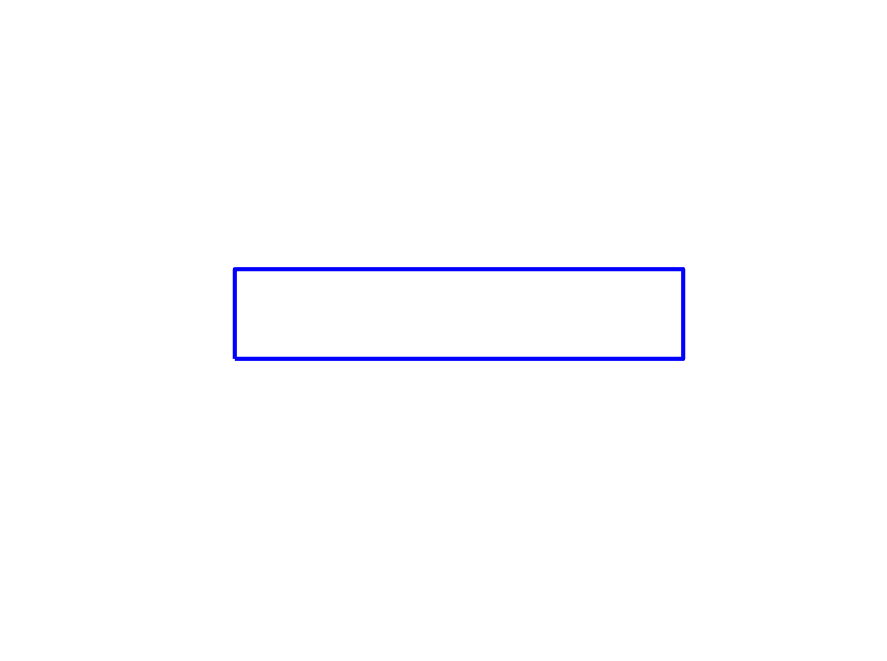} 
    &\includegraphics[trim = 25mm 25mm 25mm 25mm ,clip,width=4cm]{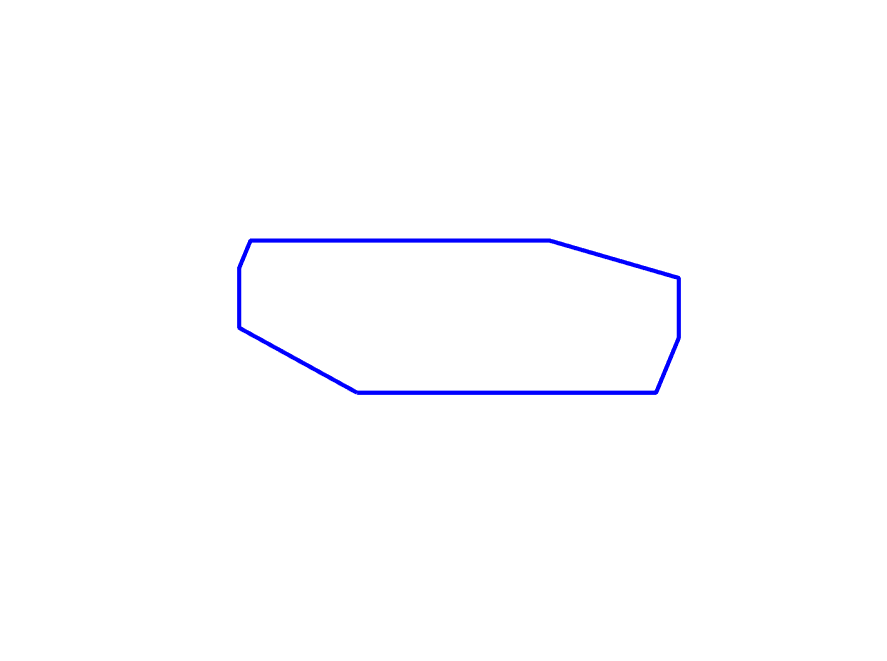}
    &\includegraphics[trim = 25mm 25mm 25mm 25mm ,clip,width=4cm]{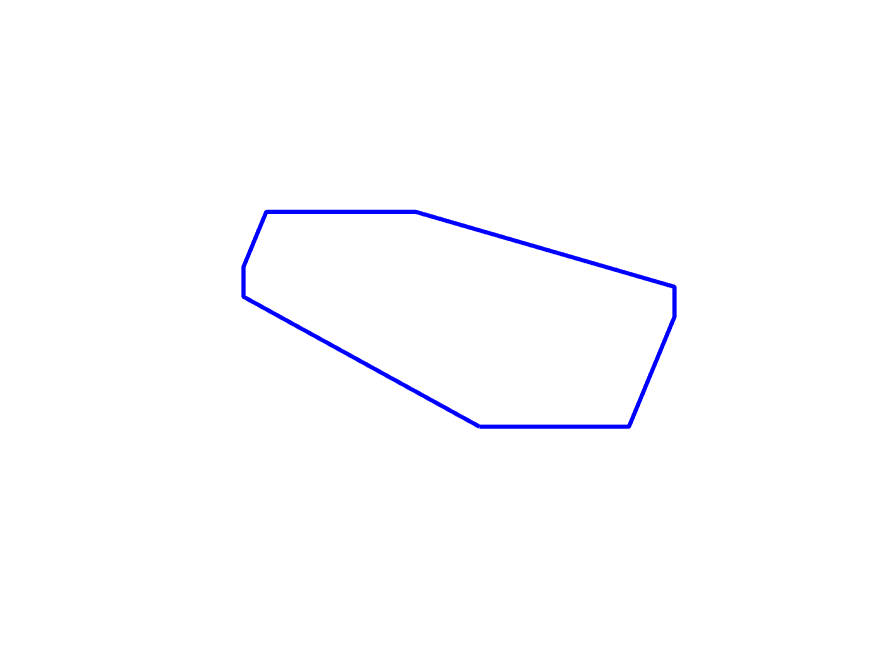} 
    &\includegraphics[trim = 25mm 25mm 25mm 25mm ,clip,width=4cm]{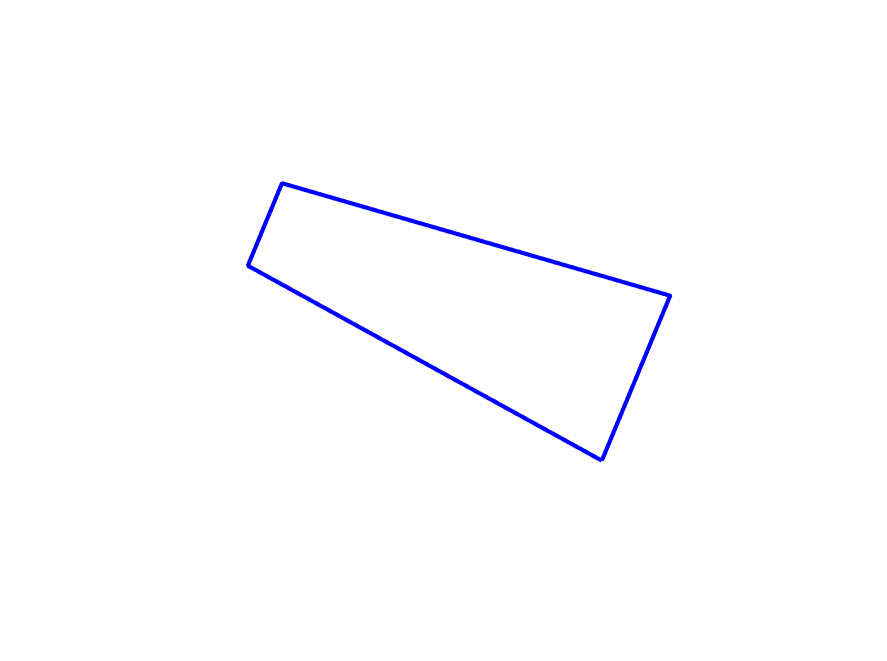}\\
    $t=0$ & $t=1/3$ & $t=2/3$ & $t=1$ \\
    \includegraphics[trim = 5mm 5mm 5mm 5mm ,clip,width=4cm]{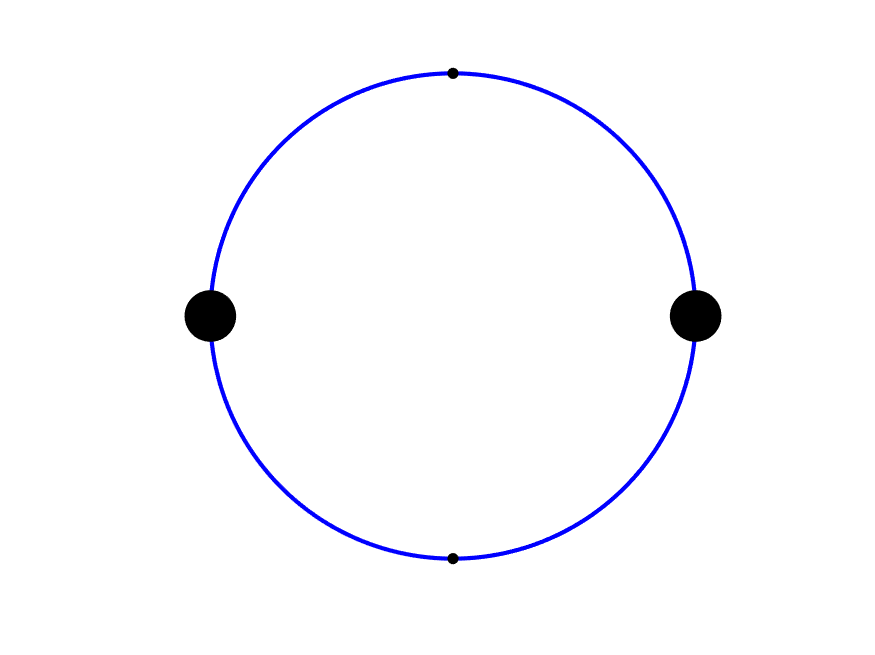} 
    &\includegraphics[trim = 5mm 5mm 5mm 5mm ,clip,width=4cm]{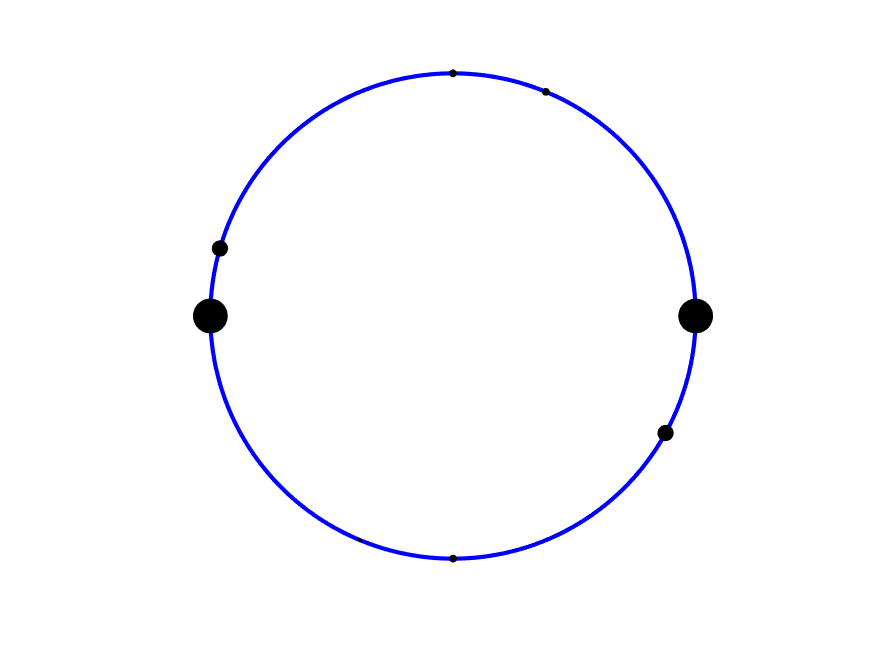}
    &\includegraphics[trim = 5mm 5mm 5mm 5mm ,clip,width=4cm]{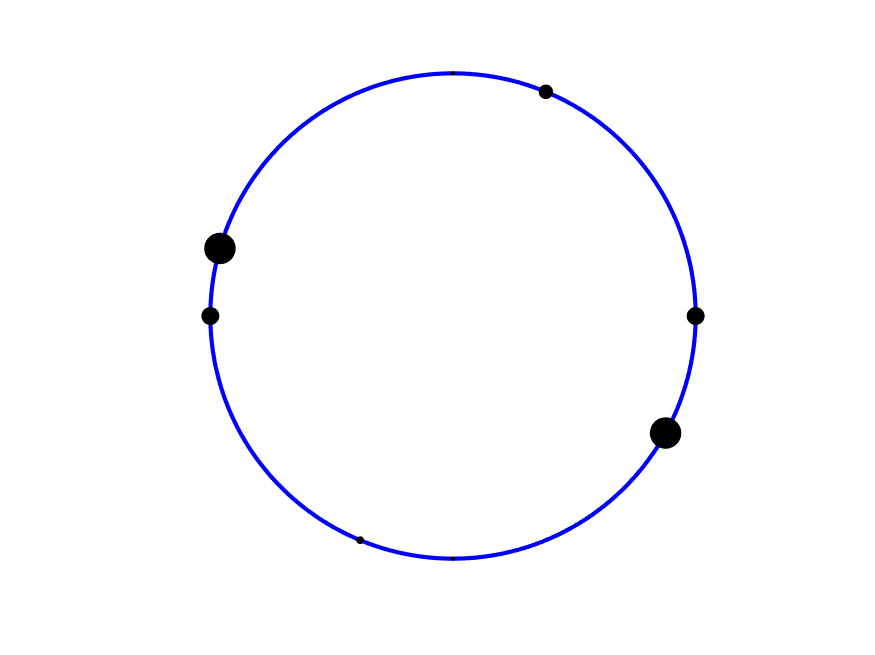} 
    &\includegraphics[trim = 5mm 5mm 5mm 5mm ,clip,width=4cm]{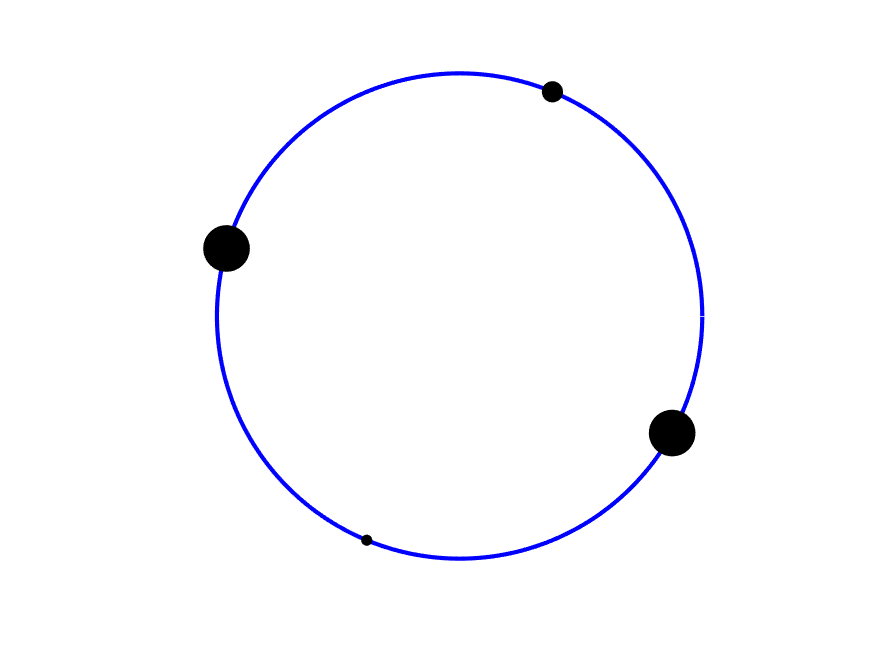}    
    \end{tabular}
    \caption{Geodesic for the kernel metrics. On top, the intermediate convex curves $c(t) \in \tilde{C}_{conv}$ and in the bottom row are shown the associated measures $\mu(t) \in \mathcal{M}^+_0$.}
    \label{fig:geod_kernel_metric}
\end{figure}

Now if we look at the metric on $\tilde{C}_{conv}$ that results from the identification of convex curves with their length measure in $\mathcal{M}^+_0$, we see that, by the isomorphism property of the mapping $M$ given by Corollary \ref{cor:isometry_convex_cones}, the geodesic $c(t) = M^{-1}(\mu(t))$ is simply the Minkowski combination of the two convex curves associated to $\mu_0$ and $\mu_1$. Note that although the distance will depend on the choice of kernel $K$, the geodesics are however independent of $K$. While the Minkowski sum may seem like a natural way to interpolate between two convex sets, it may also not be the most optimal from the point of view of shape comparison. Figure \ref{fig:geod_kernel_metric} shows an example of what a geodesic looks like both in the space of convex sets and in the space of measures. It is for instance clear from this example that the measure geodesic do not involve actual transportation of mass which in this case would be a more natural behavior. This is the shortcoming that the metrics of the following sections will attempt to address.             

\subsection{Wasserstein metrics}
A natural class of distances between probability measures is given by optimal transport and the Wasserstein metrics \cite{villani2008optimal}. Let us consider the usual geodesic distance on $\Sp^1$ defined by $d(\theta,\theta') = \min\{|\theta'-\theta|,2\pi-|\theta' - \theta|\}$. Given two probability measures $\mu_0,\mu_1 \in \mathcal{P}(\Sp^1)$, the 2-Wasserstein distance between them is defined by:
\begin{equation*}
 W_2(\mu_0,\mu_1) = \min \left\{ \iint_{\Sp^1 \times \Sp^1} d(\theta,\theta')^2 d\gamma \ | \ \gamma \in \Pi(\mu_0,\mu_1) \right\}
\end{equation*}
where $\Pi(\mu_0,\mu_1)$ is the set of all transport plans between $\mu_0$ and $\mu_1$ i.e. 
\begin{equation*}
 \Pi(\mu_0,\mu_1) = \{\gamma \in \mathcal{P}(\Sp^1 \times \Sp^1) \ | \ \gamma(A \times \Sp^1) = \mu_0(A), \ \gamma(\Sp^1 \times B) = \mu_1(B), \ \text{for all Borel sets } A,B \subset \Sp^1\}.
\end{equation*}
An optimal transport plan $\gamma$ can be shown to exist and it is known that the Wasserstein metric makes $\mathcal{P}(\Sp^1)$ into a length space with the corresponding geodesic being given by $\mu(t) = (\pi_t)_{\sharp} \gamma$ where for all $\theta,\theta' \in \Sp^1$, $t \mapsto \pi_t(\theta,\theta')$ is a unit constant speed geodesic between $\theta$ and $\theta'$, that is specifically:
\begin{equation*}
 \pi_t(\theta,\theta') = \left\lbrace\begin{aligned}
  &(1-t)\theta + t \theta', \ \text{if } -\pi\leq \theta'-\theta\leq \pi \\
  &(1-t)\theta + t (\theta'-2\pi), \ \text{if } \pi< \theta'-\theta< 2\pi \\
  &(1-t)\theta + t (\theta'+2\pi), \ \text{if } -2\pi< \theta'-\theta< -\pi 
  \end{aligned}\right.
\end{equation*}
Alternatively, this means that for all continuous function $f: \Sp^1 \rightarrow \R$: 
\begin{equation*}
 (\mu(t) | f) = \iint_{\Sp^1 \times \Sp^1} f(\pi_t(\theta,\theta')) d\gamma(\theta,\theta').
\end{equation*}
Unlike in the previous kernel framework, the Wasserstein distance and geodesics cannot be expressed in closed form but there are many well-established and efficient approaches to numerically estimate those, such as combinatorial methods \cite{delon2010fast} or Sinkhorn algorithm \cite{Cuturi2013} in the situation of discrete measures as well as methods based on the dynamical formulation of optimal transport \cite{benamou2000computational,papadakis2014optimal} for densities. 

\begin{figure}
    \begin{tabular}{cccc}
    \includegraphics[trim = 25mm 25mm 25mm 25mm ,clip,width=4cm]{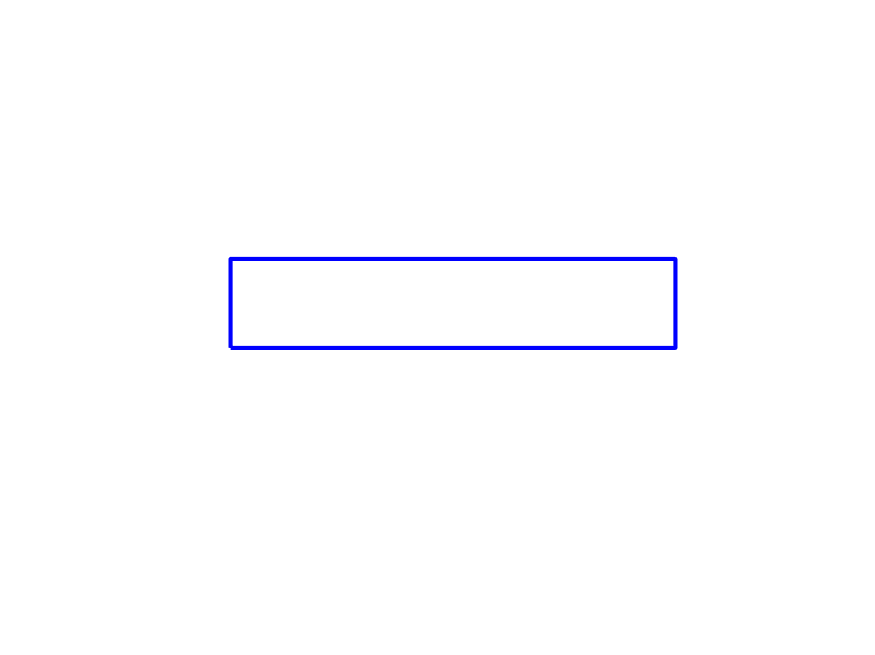} 
    &\includegraphics[trim = 25mm 25mm 25mm 25mm ,clip,width=4cm]{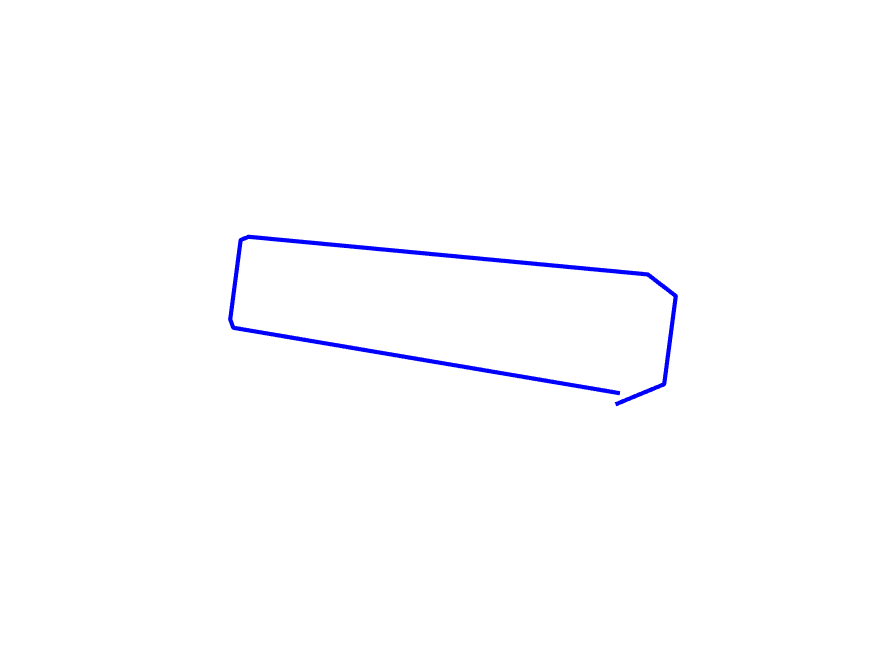}
    &\includegraphics[trim = 25mm 25mm 25mm 25mm ,clip,width=4cm]{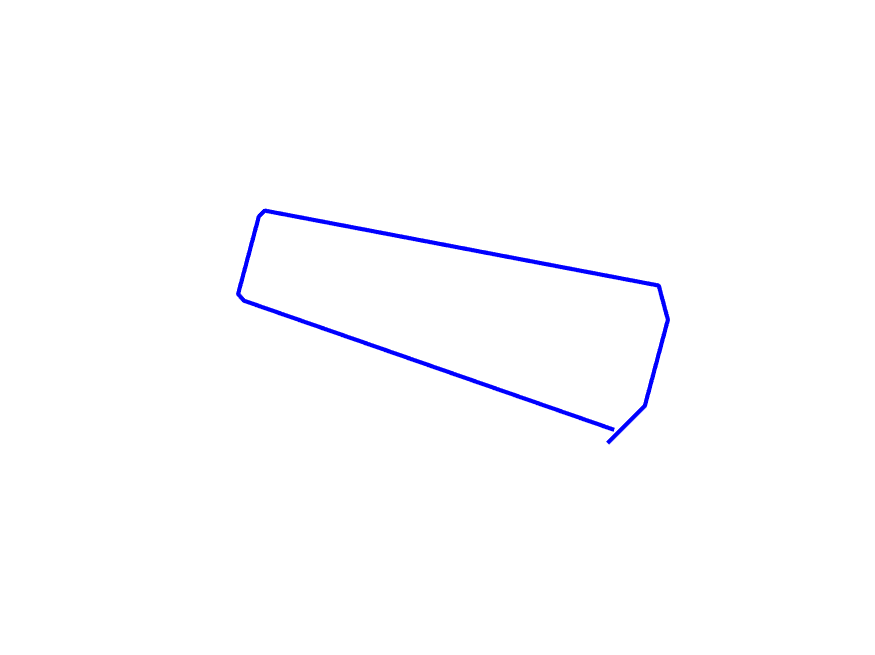} 
    &\includegraphics[trim = 25mm 25mm 25mm 25mm ,clip,width=4cm]{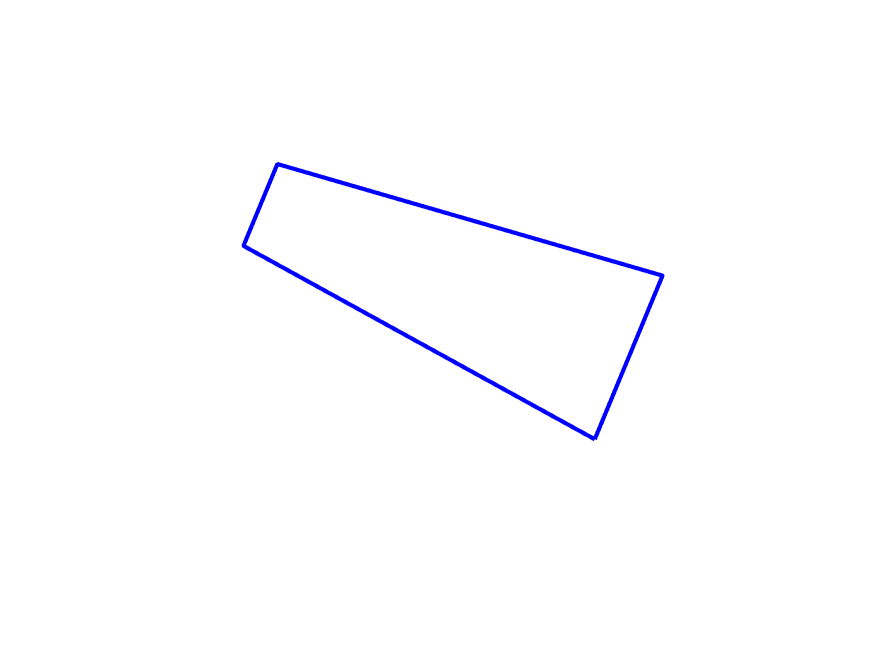}\\
    $t=0$ & $t=1/3$ & $t=2/3$ & $t=1$ \\
    \includegraphics[trim = 5mm 5mm 5mm 5mm ,clip,width=4cm]{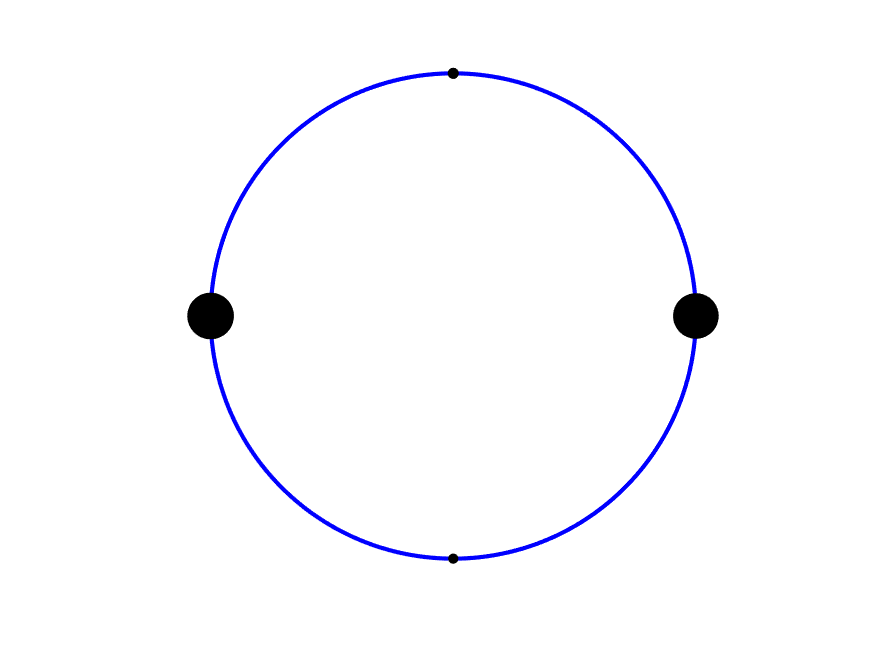} 
    &\includegraphics[trim = 5mm 5mm 5mm 5mm ,clip,width=4cm]{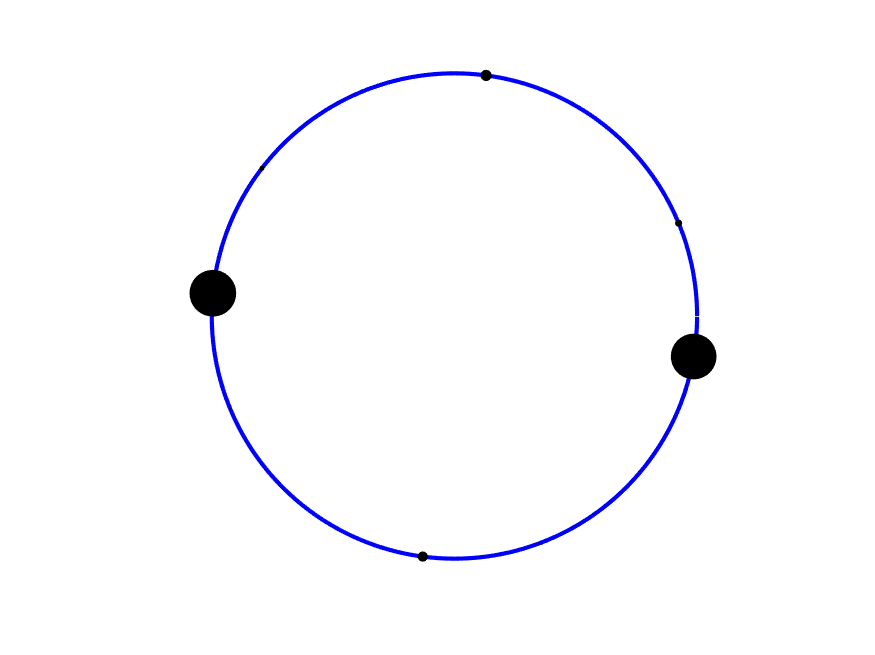}
    &\includegraphics[trim = 5mm 5mm 5mm 5mm ,clip,width=4cm]{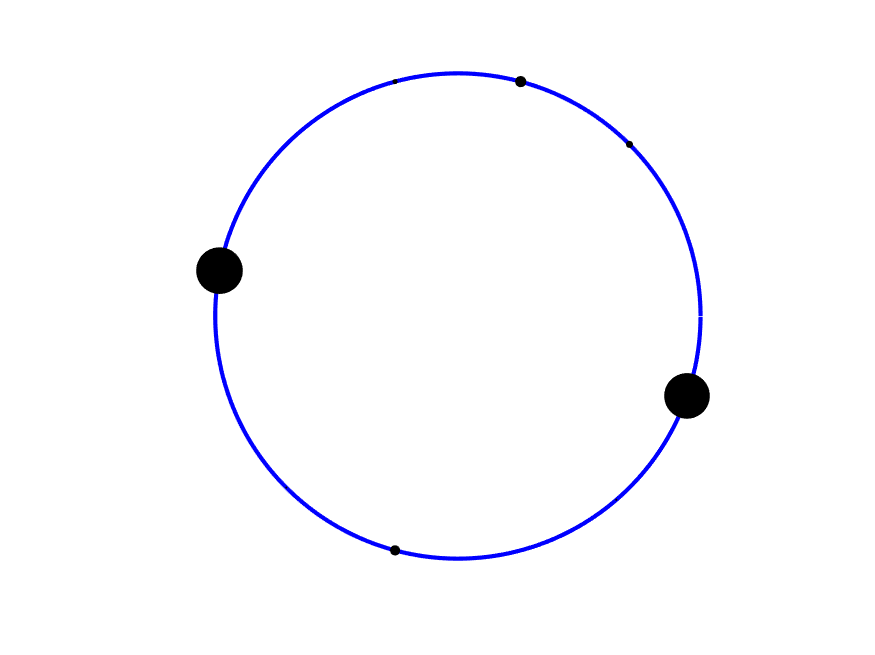} 
    &\includegraphics[trim = 5mm 5mm 5mm 5mm ,clip,width=4cm]{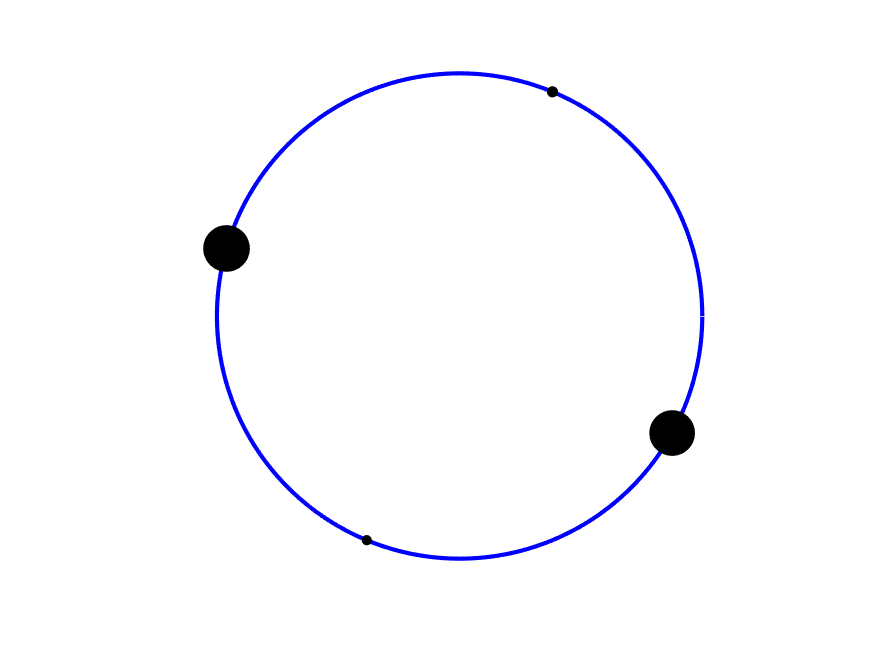}    
    \end{tabular}
    \caption{Geodesic for the Wasserstein metric computed with the Sinkhorn algorithm. On top, the intermediate convex curves $c(t) \in \tilde{C}_{conv}$ and in the bottom row are shown the associated measures $\mu(t)$. Note that the intermediate measures $\mu(t)$ do not belong to $\mathcal{M}^+_0$ which results in an opening of the reconstructed curves.}
    \label{fig:geod_OT_metric}
\end{figure}

When it comes to the comparison of convex shapes, the use of Wasserstein metrics on the length or area measures was proposed for instance in \cite{zouaki2003representation}. Note that since $W_2$ is defined between probability measures, this amounts in restricting to convex curves of length $1$ i.e. comparing curves modulo rescaling which is often natural in shape analysis. However, an important downside of the Wasserstein metric for this problem is that the subspace of probability measures in $\mathcal{M}^+_0$ is not totally geodesic for that metric, namely the above path of measures $\mu(t)$ connecting $\mu_0$ to $\mu_1$ does not generally stay in $\mathcal{M}^+_0$ when $\mu_0,\mu_1 \in \mathcal{P}(\Sp^1) \cap \mathcal{M}^+_0$. This means that the intermediate measures $\mu(t)$ cannot be canonically associated to a convex curve (or even to a closed curve as a matter of fact). We illustrate this in Figure \ref{fig:geod_OT_metric} that shows the Wasserstein geodesic between the same two discrete measures as in Figure \ref{fig:geod_kernel_metric} and the curves obtained from the reconstruction procedure of Remark \ref{rem:reconstruction_convex_curve}. This issue could be addressed a posteriori: for example the authors in \cite{zouaki2003representation} propose to consider a slightly modified path defined from the optimal transport plan. Specifically, they introduce $\tilde{\mu}(t)$ defined by:
\begin{equation*}
 (\tilde{\mu}(t) | f) = \iint_{\Sp^1 \times \Sp^1} f\left(\frac{(1-t)e^{i\theta} + t e^{i\theta'}}{|(1-t)e^{i\theta} + t e^{i\theta'}|} \right) |(1-t)e^{i\theta} + t e^{i\theta'}| d\gamma(\theta,\theta')
\end{equation*}
for which it is straightforward to check that $\tilde{\mu}(t) \in \mathcal{M}^+_0$ for all $t\in[0,1]$. A clear remaining downside however is that, despite relying on the optimal transport between $\mu_0$ and $\mu_1$, the paths of measures and corresponding convex curves are not actual geodesics associated to a metric. 

\subsection{A constrained Wasserstein distance on $\mathcal{M}^+_0$}
\label{ssec:constrained_Wasserstein}
In this section we shall instead modify the formulation of the Wasserstein metric so as to directly enforce the constraint defining $\mathcal{M}^+_0$. To do so, we will need to shift our focus to the so called dynamical formulation of optimal transport which was first introduced by the works of Benamou and Brenier in \cite{benamou2000computational} and derive a primal-dual scheme adapted from the work of \cite{papadakis2014optimal} to tackle the corresponding optimization problem. 

\subsubsection{Mathematical formulation}
\label{ssec:constrained_Wasserstein_theory}
Formally, the above Wasserstein metric can be obtained by minimizing $\int_0^1 \int_{\Sp^1} |v_t|^2 d\mu(t) dt$
over all vector fields $v_t(\cdot)$ and path of measures $\mu(t) \in \mathcal{M}$ that satisfy the boundary constraints $\mu(0)=\mu_0$, $\mu(1)=\mu_1$ and the continuity equation on $\Sp^1$ $\partial_t \mu(t) + \partial_\theta(v_t \mu(t)) =0$. To make this formulation more rigorous, one has to introduce the following definitions. 
\begin{definition}
 Let $(\rho,m)$ be a couple of measures on $[0,1] \times \Sp^1$ which we will write $\rho,m \in \mathcal{M}([0,1]\times \Sp^1)$. We say that $(\rho,m)$ satisfy the continuity equation $\partial_t \rho + \partial_\theta m = 0$ with boundary conditions $\rho(0) = \mu_0$ and $\rho(1)=\mu_1$ in the distribution sense if:
 \begin{equation}
 \label{eq:continuity_weak}
  \int_{[0,1]\times \Sp^1} \partial_t \phi \ d \rho + \int_{[0,1]\times \Sp^1} \partial_\theta \phi \ d m = \int_{\Sp^1} \phi(1,\cdot) d\mu_1 - \int_{\Sp^1} \phi(0,\cdot) d\mu_0. 
 \end{equation}
 for all $\phi \in C^1([0,1]\times \Sp^1)$.
\end{definition}
We recall the following usual property of the continuity equation
\begin{property}
\label{prop:disintegration_Lebesgue}
 If $(\rho,m)$ satisfies the above continuity equation with boundary conditions $\mu_0$ and $\mu_1$, then the time marginal of $\rho$ is the Lebesgue measure on $[0,1]$. In other words the measure $\rho$ disintegrates as $\rho= \rho_t \otimes dt$.    
\end{property}
\begin{proof}
By applying the disintegration theorem on measures of $[0,1] \times \Sp^1$ (cf Theorem 2.28 in \cite{Ambrosio2000}), we have the existence of finite measures $\rho_t$ on $\Sp^1$ for all $t\in [0,1]$ such that we can write $\rho = \rho_t \otimes \nu$ with $\nu$ the measure on $[0,1]$ given by $\nu(A) = \rho(A \times \Sp^1)$. Taking $\phi(t,\theta) = \psi(t)$ with $\psi \in C^1([0,1])$, we get from \eqref{eq:continuity_weak}:
 \begin{equation*}
  \int_{[0,1]} \psi'(t) d\nu(t) = \psi(1) - \psi(0). 
 \end{equation*}
 Since this holds for any $\psi \in C^1([0,1])$, we obtain that $\nu$ is the Lebesgue measure on $[0,1]$.  
\end{proof}
Next we define the set $B=\{(a,b) \in \R^2 \ | \ a+\frac{1}{2} |b|^2 \leq 0 \}$ and we will denote $\iota_B$ its convex indicator function: $\iota_B(a,b) =0$ for $(a,b) \in B$ and  $\iota_B(a,b) =+\infty$ otherwise. We also define $f: \R \times \R \rightarrow \R_+$ by:
\begin{equation*}
 f(d,m) = 
 \left\lbrace\begin{aligned}
  &\frac{|m|^2}{2d} \ \ \text{if } d>0 \\
  &0  \ \ \text{if } (d,m)=(0,0) \\
  &+\infty \ \ \text{otherwise}
  \end{aligned}\right.
\end{equation*}
Note that $f$ is a 1-homogeneous function and is the convex conjugate of $\iota_B$. Then it has been shown that given two measures $\mu_0$ and $\mu_1$, their Wasserstein distance is equal to:
\begin{equation}
\label{eq:Wasserstein_dynamic}
 W_2(\mu_0,\mu_1) = \inf_{\nu = (\rho,m)} \left\{ \int_{[0,1] \times \Sp^1} f\left(\frac{d\rho}{d\lambda}\right) d\lambda \ | \ \text{subj. to } \rho_0 = \mu_0, \ \rho_1 =\mu_1, \ \partial_t \rho + \partial_\theta m = 0\right\}.
\end{equation}
where $\lambda$ is any measure on $[0,1] \times \Sp^1$ such that $|\nu| \ll \lambda$ and $\frac{d\nu}{d\lambda}$ denotes the Radon-Nykodym derivative (the choice of such $\lambda$ does not affect the value in \eqref{eq:Wasserstein_dynamic} due to the homogeneity of $f$). Furthermore, the existence of optimal measures $\rho$ and $m$ can be proved and the path of measures $t \mapsto \rho_t$ given by Property \ref{prop:disintegration_Lebesgue} is a geodesic between $\mu_0$ and $\mu_1$.  

We now modify the above expression so as to enforce that the path $\rho_t$ remains in the subspace $\mathcal{M}^+_0$. 
\begin{definition}
 Let $\mu_0$ and $\mu_1$ be two probability measures in $\mathcal{M}^+_0$. With the same conventions as above, we define the constrained Wasserstein distance on $\mathcal{M}^+_0$ by:
 \begin{equation}
  \label{eq:mod_Wasserstein_def}
  \overline{W}_2(\mu_0,\mu_1) = \inf_{\nu = (\rho,m)} \left\{ \int_{[0,1] \times \Sp^1} f\left(\frac{d\nu}{d\lambda}\right) d\lambda \ | \ \text{subj. to } \left\{
    \begin{array}{lll}
         \partial_t\rho + \partial_\theta m= 0 \\
          \rho_0=\mu_0, \ \rho_1=\mu_1 \\
          \int_{\Sp^1} e^{i\theta} d\rho_t(\theta)=0, \ \text{for a.e } t\in[0,1]
    \end{array}
\right.\right\}.
 \end{equation}
\end{definition}
As we show next, $\overline{W}_2$ defines a distance and we also recover the existence of geodesics.
\begin{theorem}
\label{thm:existence_geodesics_constr_Wass}
For any two probability measures $\mu_0$ and $\mu_1$ in $\mathcal{M}^+_0$ such that $\overline{W}_2(\mu_0,\mu_1)<+\infty$, there exist $(\rho,m)$ achieving the minimum in \eqref{eq:mod_Wasserstein_def}. Moreover, $\overline{W}_2$ defines a distance on the space $\mathcal{P}(\Sp^1) \cap \mathcal{M}^+_0$.
\end{theorem}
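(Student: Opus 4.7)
The plan is to prove existence by the direct method of the calculus of variations, adapting the classical Benamou--Brenier argument to accommodate the additional linear closure constraint, and then to derive the metric axioms by standard manipulations on the dynamical cost. Let $(\rho_n, m_n)$ be a minimizing sequence for \eqref{eq:mod_Wasserstein_def}. Since both endpoints are probability measures, the continuity equation forces $(\rho_n)_t \in \mathcal{P}(\Sp^1)$ for a.e.\ $t$, hence $\rho_n([0,1]\times\Sp^1) = 1$. Writing $m_n = v_n\rho_n$, Cauchy--Schwarz gives $|m_n|([0,1]\times\Sp^1) \leq \bigl(\int|v_n|^2\,d\rho_n\bigr)^{1/2} = \sqrt{2(\overline{W}_2(\mu_0,\mu_1)+o(1))}$, so both $(\rho_n)$ and $(m_n)$ are bounded in total variation. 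Banach--Alaoglu then produces a subsequence converging weakly-$\ast$ to some $(\rho,m)$ of finite Radon measures on $[0,1]\times\Sp^1$.

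The next step is to verify admissibility of $(\rho,m)$. The continuity equation \eqref{eq:continuity_weak} is linear and paired against $C^1$ test functions, hence passes to the weak-$\ast$ limit together with the boundary conditions. For the closure constraint, I would pair $\rho_n$ against the family $\phi(t)e^{i\theta}$, $\phi \in C([0,1])$; each such integral vanishes for every $n$, and by weak-$\ast$ convergence the limit identity $\int_0^1 \phi(t)\bigl(\int_{\Sp^1} e^{i\theta}\,d\rho_t\bigr)\,dt = 0$ holds, where I use the disintegration $\rho = \rho_t \otimes dt$ provided by Property \ref{prop:disintegration_Lebesgue}. By density this forces $\int_{\Sp^1} e^{i\theta}\,d\rho_t = 0$ for a.e.\ $t$. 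Optimality then follows from the classical Reshetnyak-type lower semicontinuity of the convex $1$-homogeneous functional $\int f(d\nu/d\lambda)\,d\lambda$ with respect to weak-$\ast$ convergence of measures (see e.g.\ \cite{Ambrosio2000}).

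For the metric axioms, non-negativity is immediate; if $\overline{W}_2(\mu_0,\mu_1)=0$ then necessarily the optimal $m$ vanishes, so $\partial_t\rho=0$ and the boundary data yield $\mu_0=\mu_1$; symmetry follows from the time-reversal $(\rho_t,m_t)\mapsto(\rho_{1-t},-m_{1-t})$, which preserves the action since $f$ is even in $m$ and preserves all three constraints. For the triangle inequality, two admissible paths on $[0,1]$ of action $A$ and $B$ can be reparametrized onto $[0,\alpha]$ and $[\alpha,1]$ with actions $A/\alpha$ and $B/(1-\alpha)$; the concatenation remains admissible because the closure constraint is slice-wise and hence preserved by reparametrization, and optimizing $\alpha$ yields the subadditivity of the square root of $\overline{W}_2$. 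The main obstacle I anticipate is confirming that the affine closure constraint is genuinely preserved under both the weak-$\ast$ limit and the concatenation procedure; once this is settled, the argument essentially reduces to the standard Benamou--Brenier theory restricted to a weakly-$\ast$ closed convex slice of the admissible pairs.
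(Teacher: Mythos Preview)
Your proof is correct but follows a genuinely different route from the paper. You establish existence via the \emph{direct method}: bound a minimizing sequence in total variation, extract a weak-$\ast$ limit, and invoke Reshetnyak lower semicontinuity together with the observation that the closure constraint, being a linear functional against continuous test functions $\phi(t)e^{i\theta}$, passes to the limit. The paper instead proves existence by \emph{Fenchel--Rockafellar duality}: it introduces a primal problem over test functions $(\phi,\psi_1,\psi_2)$, writes it as $F\circ A+G$, and obtains the minimizer of \eqref{eq:mod_Wasserstein_def} as the attained supremum in the dual; the key computation is identifying $G^*(-A^*\nu)$ with the indicator of the constraint set (continuity equation, boundary data, \emph{and} closure condition), the latter arising precisely from the supremum over $\psi_1,\psi_2$.

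Each approach has its merits. Yours is more elementary and self-contained, and it makes explicit why the affine closure constraint causes no difficulty (it is weak-$\ast$ closed). The paper's duality argument, on the other hand, simultaneously yields the dual formulation, which is exactly the structure exploited in the primal--dual numerical scheme of Section~\ref{ssec:constrained_Wasserstein_numerics}; this is a genuine payoff that your route does not provide. For the metric axioms, the arguments are essentially parallel: the paper shortcuts positive definiteness via $\overline{W}_2\geq W_2$, whereas you argue directly that zero action forces $m=0$; for the triangle inequality your reparametrization over a free splitting point $\alpha$ is actually more careful, since the cost here is quadratic in the velocity and one must optimize $\alpha$ (or equivalently pass to the square root) to obtain subadditivity.
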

\begin{proof}
The proof mainly requires adapting similar arguments developed for other variations of the optimal transport problem \cite{cardaliaguet2013geodesics,Chizat2018}. For any $\phi \in\mathcal{C}^1([0,1]\times\mathbb{S}^1)$ and $\psi\in\mathcal{C}([0,1])$, let us define
\begin{align*}
    J(\phi, \psi_1,\psi_2) &= \int_0^1\int_{\mathbb{S}^1}\iota_B(\partial_t\phi+\cos(\theta)\psi_1(t)+\sin(\theta)\psi_2(t),\partial_\theta\phi) d\theta dt + \int_{\mathbb{S}^1}\phi(0,\cdot)d\rho_0-\int_{\mathbb{S}^1}\phi(1,\cdot)d\rho_1 \\
    &= F \circ A(\phi, \psi_1,\psi_2) +G(\phi,\psi_1,\psi_2)
\end{align*}
where 
\begin{align*}
&F(\alpha, \beta)= \int_0^1\int_{\mathbb{S}^1}\iota_B(\alpha(t,\theta),\beta(t,\theta))d\theta dt \\
&A(\phi,\psi_1,\psi_2) = (\partial_t\phi+\cos(\theta)\psi_1(t)+\sin(\theta)\psi_2(t),\partial_\theta \phi) \\
&G(\phi,\psi_1,\psi_2)= \int_0^1\int_{\mathbb{S}^1}\phi(0,\cdot)d\rho_0- \int_0^1\int_{\mathbb{S}^1}\phi(1,\cdot)d\rho_1
\end{align*}
One can easily check that $F$ and $G$ are proper convex functions and are lower semi-continuous. 
Furthermore, taking for instance $\phi(t,\theta) = at$ with $a<0$ and $\psi(t)=0$, we see that $F$ is continuous at $A(\phi,\psi) = (a,0)$ since $(a,0)$ is in the interior of $B$. We can thus apply the Fenchel-Rockafellar duality theorem, which leads to 
\begin{equation}
\label{eq:Fenchel_Rockafellar_duality}
\underset{\phi\in\mathcal{C}^1([0,1]\times\mathbb{S}^1),\psi_1,\psi_2\in\mathcal{C}([0,1])}{\inf} J(\phi,\psi) = \underset{\nu \in \mathcal{M}([0,1]\times \Sp^1)\times\mathcal{M}([0,1]\times \Sp^1)}{\sup} \left\{-F^{*}(\nu)-G^*(-A^*(\nu)) \right\}.
\end{equation}
We now compute for $\nu=(\rho,m) \in \mathcal{M}([0,1]\times \Sp^1) \times \mathcal{M}([0,1]\times \Sp^1)$:
\begin{align*}
&G^*(-A^*\nu) = \underset{\phi, \psi_1,\psi_2}{\sup}\left\{\langle \nu , -A(\phi,\psi_1,\psi_2)\rangle + \int_{\mathbb{S}^1}\phi(1,\cdot)d\rho_1-\int_{\mathbb{S}^1}\phi(0,\cdot)d\rho_0 \right\} \\
&= \underset{\phi, \psi_1,\psi_2}{\sup}\left\{-\langle\nu,(\partial_t\phi,\partial_\theta \phi)\rangle -  \langle \nu ,(\cos(\theta)\psi_1(t)+\sin(\theta)\psi_2(t),0)\rangle + \int_{\mathbb{S}^1}\phi(1,\cdot)d\rho_1-\int_{\mathbb{S}^1}\phi(0,\cdot)d\rho_0 \right\} \\
&= \underset{\phi, \psi_1,\psi_2}{\sup}\left\{-\langle\nu,(\partial_t\phi,\partial_\theta \phi)\rangle+ \int_{\mathbb{S}^1}\phi(1,\cdot)d\rho_1-\int_{\mathbb{S}^1}\phi(0,\cdot)d\rho_0
 -  \langle \nu ,(\cos(\theta)\psi_1(t)+\sin(\theta)\psi_2(t),0)\rangle
 \right\}  \\
 &= \underset{\phi}{\sup}\left\{-\langle\nu,(\partial_t\phi,\nabla\phi)\rangle+ \int_{\mathbb{S}^1}\phi(1,\cdot)d\rho_1 -\int_{\mathbb{S}^1}\phi(0,\cdot)d\rho_0 \right\} + \underset{\psi_1,\psi_2}{\sup}\left\{-\langle \nu ,(\cos(\theta)\psi_1(t)+\sin(\theta)\psi_2(t),0)\rangle 
 \right\} \\
 &= \underset{\phi}{\sup}\left\{-\langle \nu, (\partial_t\phi,\partial_\theta \phi)\rangle + \int_{\mathbb{S}^1}\phi(1,\cdot)d\rho_1 -\int_{\mathbb{S}^1}\phi(0,\cdot)d\rho_0 \right\} + \underset{\psi_1}{\sup}\left\{-\int_{[0,1]\times\mathbb{S}^1} \cos(\theta) \psi_1(t) d\rho \right\} \\
 &\phantom{\underset{\phi}{\sup}\left\{-\langle \nu, (\partial_t\phi,\partial_\theta \phi)\rangle + \int_{\mathbb{S}^1}\phi(1,\cdot)d\rho_1 -\int_{\mathbb{S}^1}\phi(0,\cdot)d\rho_0 \right\}+}+ \underset{\psi_2}{\sup}\left\{-\int_{[0,1]\times\mathbb{S}^1} \sin(\theta) \psi_2(t) d\rho
 \right\} 
\end{align*}
and we find 
\begin{equation*}
\underset{\phi}{\sup}\left\{-\langle\nu,(\partial_t\phi,\partial_\theta \phi)\rangle+ \int_{\mathbb{S}^1}\phi(1,\cdot)d\rho_1 -\int_{\mathbb{S}^1}\phi(0,\cdot)d\rho_0 \right\} = \left\{
    \begin{array}{ll}
         0 &\text{if } \partial_t\rho + \partial_\theta m= 0, \rho(0,\cdot)=\rho_0, \rho(1,\cdot)=\rho_1 \\
         +\infty &\text{otherwise}
    \end{array}
\right.
\end{equation*}
from the definition of the weak continuity equation \eqref{eq:continuity_weak}. It follows that the supremum on the right hand side of \eqref{eq:Fenchel_Rockafellar_duality} can be restricted to measures $\rho$ and $m$ satisfying the continuity equation. Therefore, by Property \ref{prop:disintegration_Lebesgue}, we obtain that 
\begin{align*}
\underset{\psi_1}{\sup}\left\{-\int_{[0,1]\times\mathbb{S}^1}\cos(\theta)\psi_1(t)d\rho
 \right\} &= \underset{\psi_1}{\sup}\left\{-\int_0^1 \left(\int_{\Sp^1} \cos(\theta) d\rho_t(\theta) \right) \psi_1(t) dt
 \right\} \\
 &=\left\{
    \begin{array}{ll}
         0 &\mbox{ if for a.e } t\in[0,1], \ \int_{\mathbb{S}^1}\cos(\theta)d\rho_t = 0\\
         +\infty &\mbox{ otherwise}
    \end{array}
\right.
\end{align*}
and similarly 
\begin{align*}
\underset{\psi_2}{\sup}\left\{-\int_{[0,1]\times\mathbb{S}^1}\sin(\theta)\psi_2(t)d\rho
 \right\} =\left\{
    \begin{array}{ll}
         0 &\mbox{ if for a.e } t\in[0,1], \ \int_{\mathbb{S}^1}\sin(\theta)d\rho_t = 0\\
         +\infty &\mbox{ otherwise}
    \end{array}
\right.
\end{align*}
from which we deduce that 
\begin{equation}
\label{eq:dual_problem}
\underset{\nu }{\sup} \left\{-F^{*}(\nu)-G^*(-A^*(\nu)) \right\} = -\underset{\nu}{\inf} \left\{F^{*}(\nu) \ | \ \text{subj. to } \left\{
    \begin{array}{lll}
         \partial_t\rho + \partial_\theta m= 0 \\
          \rho_0=\mu_0, \ \rho_1=\mu_1 \\
          \int_{\Sp^1} e^{i\theta} d\rho_t(\theta)=0, \ \text{for a.e } t\in[0,1]
    \end{array} \right.
    \right\}
\end{equation}
Finally the convex conjugate of $F$ is computed for instance in \cite{Chizat2018} based on Theorem 5 of \cite{rockafellar1971integrals} and can be shown to be:
\begin{equation*}
 F^{*}(\nu) = \int_{[0,1]\times \Sp^1} f\left(\frac{d\nu}{d\lambda}\right) d\lambda.
\end{equation*}
This allows to conclude that \eqref{eq:dual_problem} corresponds to $-\overline{W}_2(\mu_0,\mu_1)$ and, since we assume that $\overline{W}_2(\mu_0,\mu_1)<+\infty$, we obtain by Fenchel-Rockafellar theorem the existence of a minimizer $(\rho,m)$ to \eqref{eq:mod_Wasserstein_def}.

In addition, we have by construction that $\overline{W}_2(\mu_0,\mu_1) \geq W_2(\mu_0,\mu_1)$ and therefore $\overline{W}_2(\mu_0,\mu_1) = 0$ implies that $\mu_0 = \mu_1$. The symmetry of $\overline{W}_2$ is also immediate from the fact that for any measure path $\nu=(\rho,m)$ satisfying the conditions in \eqref{eq:mod_Wasserstein_def}, one can consider $\tilde{\nu} = (\tilde{\rho}_t = \rho_{1-t},\tilde{m}_t = m_{1-t})$ the time-reversed paths which satisfy $\tilde{\rho}_0 = \mu_1$, $\tilde{\rho}_1 = \mu_0$, $\partial_t\tilde{\rho} + \partial_\theta \tilde{m}= 0$ and $\int_{\Sp^1} e^{i\theta} d\tilde{\rho}_t(\theta)=0$ while:
\begin{equation*}
 \int_{[0,1] \times \Sp^1} f\left(\frac{d\tilde{\nu}}{d\lambda}\right) d\lambda = \int_{[0,1] \times \Sp^1} f\left(\frac{d\nu}{d\lambda}\right) d\lambda.
\end{equation*}
Finally, the triangular inequality can be shown in similar way as for the standard Wasserstein metric i.e. by concatenation. Let $\mu_0,\mu_1,\mu_2 \in \mathcal{P}(\Sp^1) \cap \mathcal{M}^+_0$ and assume that $\overline{W}_2(\mu_0,\mu_1) <+\infty$ and $\overline{W}_2(\mu_1,\mu_2) <+\infty$ (otherwise the triangular inequality is trivially satisfied). From the above, we know that there exist $\nu=(\rho,m)$ and $\nu'=(\rho',m')$ that achieve the minimum in the distances $\overline{W}_2(\mu_0,\mu_1)$ and $\overline{W}_2(\mu_1,\mu_2)$ respectively. Letting $\tau^0:(t,\theta) \mapsto (t/2,\theta)$ and $\tau^1:(t,\theta) \mapsto ((t+1)/2,\theta)$, we define the following concatenated path $\bar{\nu}=(\bar{\rho},\bar{m})$:
\begin{equation*}
 \bar{\nu} = \left\{
    \begin{aligned}
         &((\tau^0)_\sharp \rho,2 (\tau^0)_\sharp m) \ \ \text{for } 0\leq t < \frac{1}{2} \\
         &((\tau^1)_\sharp \rho',2 (\tau^1)_\sharp m') \ \ \text{for } \frac{1}{2}\leq t \leq 1
    \end{aligned}
\right.
\end{equation*}
for which one can easily verify that $\bar{\rho}_0 = \mu_0$, $\bar{\rho}_1 = \mu_2$, $\partial_t\bar{\rho} + \partial_\theta \bar{m}= 0$ and $\int_{\Sp^1} e^{i\theta} d\bar{\rho}_t(\theta)=0$ for almost all $t \in [0,1]$. It follows that:
\begin{align*}
 \overline{W}_2(\mu_0,\mu_2) \leq  \int_{[0,1] \times \Sp^1} f\left(\frac{d\bar{\nu}}{d\lambda}\right) d\lambda &= \int_{[0,1] \times \Sp^1} f\left(\frac{d\nu}{d\lambda}\right) d\lambda + \int_{[0,1] \times \Sp^1} f\left(\frac{d\nu'}{d\lambda}\right) d\lambda \\
 &= \overline{W}_2(\mu_0,\mu_1) + \overline{W}_2(\mu_1,\mu_2).
\end{align*}
\end{proof}

In general, one cannot guarantee that the distance $\overline{W}_2$ is finite between any two measures. Nevertheless, it holds in particular when:
\begin{prop}
\label{prop:finiteness_constrained_Wasserstein}
 Let $\mu_0 = \rho_0(\theta) d\theta$ and $\mu_1 = \rho_1(\theta) d\theta$ be two measures in $\mathcal{P}(\Sp^1) \cap \mathcal{M}^+_0$ with densities with respect to the Lebesgue measure on $\Sp^1$ with $\rho_0,\rho_1 \in L^1(\Sp^1)$, and assume that there exists $c>0$ such that $\rho_0(\theta)\geq c$, $\rho_1(\theta)\geq c$ for almost all $\theta \in \Sp^1$. Then $\overline{W}_2(\mu_0,\mu_1) <+\infty$. 
\end{prop}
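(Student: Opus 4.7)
The plan is to exhibit an explicit admissible pair $(\rho,m)$ in the variational problem defining $\overline{W}_2(\mu_0,\mu_1)$ whose action is finite, and the simplest reasonable candidate is the linear interpolation already used for the kernel metrics.

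First I would set $\rho_t \doteq (1-t)\mu_0 + t\mu_1$ for $t \in [0,1]$. Because the constraint $\int_{\Sp^1} e^{i\theta}\,d\mu(\theta)=0$ defining $\mathcal{M}^+_0$ is linear, $\rho_t$ stays in $\mathcal{P}(\Sp^1)\cap\mathcal{M}^+_0$ for every $t$, so the mean-zero condition in \eqref{eq:mod_Wasserstein_def} is automatic. Its Lebesgue density $\rho_t(\theta)=(1-t)\rho_0(\theta)+t\rho_1(\theta)$ inherits the uniform lower bound $\rho_t(\theta)\geq c$ from the hypothesis on $\rho_0$ and $\rho_1$, and this bound will be the key ingredient keeping the action finite.

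Next I would construct a \emph{time-independent} momentum $m$ compatible with the continuity equation. Since $\partial_t\rho_t = \rho_1-\rho_0$ does not depend on $t$, it suffices to find $m:\Sp^1\to\R$ solving $\partial_\theta m = -(\rho_1-\rho_0)$. The natural antiderivative
\[
 m(\theta) \doteq -\int_0^\theta \bigl(\rho_1(\theta')-\rho_0(\theta')\bigr)\,d\theta'
\]
is well-defined as a function on $\Sp^1$ precisely because $\int_{\Sp^1}(\rho_1-\rho_0)\,d\theta=0$, both $\mu_0,\mu_1$ being probability measures. I would then verify the weak continuity equation \eqref{eq:continuity_weak} against any test function $\phi\in C^1([0,1]\times\Sp^1)$ via Fubini and one-dimensional integration by parts in $\theta$ and in $t$; this is routine since $\rho$ and $m$ are absolutely continuous with respect to Lebesgue measure. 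The function $m$ is moreover continuous on $\Sp^1$ with $\|m\|_\infty \leq \|\rho_0\|_{L^1}+\|\rho_1\|_{L^1}=2$.

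Finally I would bound the action. Taking $\lambda = dt\otimes d\theta$ as reference measure on $[0,1]\times\Sp^1$, both $\rho$ and $m$ are absolutely continuous with respect to $\lambda$, so
\[
 \int_{[0,1]\times\Sp^1} f\!\left(\frac{d\rho}{d\lambda},\frac{dm}{d\lambda}\right)\,d\lambda
 = \int_0^1\!\int_{\Sp^1} \frac{|m(\theta)|^2}{2\rho_t(\theta)}\,d\theta\,dt
 \leq \frac{\pi\,\|m\|_\infty^2}{c} < +\infty,
\]
where the inequality uses $\rho_t(\theta)\geq c$ almost everywhere. This yields $\overline{W}_2(\mu_0,\mu_1) < +\infty$. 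There is no real obstacle in this argument: the lower bound $c$ on the densities is what prevents the integrand $|m|^2/(2\rho_t)$ from blowing up, and the mean-zero property of $\rho_1-\rho_0$ is what allows the momentum to be globally defined on $\Sp^1$; the only slightly delicate point is the rigorous verification of \eqref{eq:continuity_weak}, which however reduces to a standard computation.
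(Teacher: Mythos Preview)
Your proposal is correct and follows essentially the same approach as the paper: linear interpolation $\rho_t=(1-t)\rho_0+t\rho_1$, a time-independent momentum given as an antiderivative of $\rho_0-\rho_1$, and the bound on the action via the uniform lower bound $\rho_t\geq c$. Your sign in the definition of $m$ is in fact the correct one for the continuity equation (the paper's $H$ appears to carry a harmless sign typo), and your explicit bound $\pi\|m\|_\infty^2/c$ is a slight refinement of the paper's $\frac{1}{2c}\int_{\Sp^1}H(\theta)^2\,d\theta$.
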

\begin{proof}
 With the above assumptions, let us consider the linear interpolation path $\rho_t(\theta) = (1-t) \rho_0(\theta) + t \rho_1(\theta)$ and define $m_t = H(\theta) d\theta$ where $H(\theta) = \int_0^{\theta} (\rho_1(\alpha)-\rho_0(\alpha)) d\alpha$. Then one easily checks that we have $\partial_t \rho + \partial_\theta m = 0$ in the sense of distributions. Furthermore, 
 \begin{equation*}
  \int_{\Sp^1} e^{i\theta} \rho_t(\theta) d \theta = (1-t) \int_{\Sp^1} e^{i\theta} \rho_0(\theta) d \theta + t \int_{\Sp^1} e^{i\theta} \rho_1(\theta) d \theta = 0.
 \end{equation*}
Then, taking $\lambda$ the Lebesgue measure on $[0,1] \times \Sp^1$ and since $\rho_t(\theta) \geq c$ for almost all $\theta$, we find:
\begin{equation*}
 \int_{0}^1 \int_{\Sp^1} f\left(\rho_t(\theta),m_t(\theta)\right) d\theta dt = \int_{0}^1 \int_{\Sp^1} \frac{H(\theta)^2}{2\rho_t(\theta)}  d\theta dt \leq \frac{1}{2c} \int_{\Sp^1} H(\theta)^2 d\theta <+\infty 
\end{equation*}
and thus $\overline{W}_2(\mu_0,\mu_1)$ is also finite. 
\end{proof}
Note that as a special case of this proposition, we deduce that the distance is finite between any two continuous strictly positive densities on $\Sp^1$.

Now, thanks to the bijection induced by $M$ between the set of convex curves of length one modulo translation which we will denote $\tilde{C}_{conv}^1$ and the set of probability measures of $\mathcal{M}^+_0$, we can equip $\tilde{C}_{conv}^1$ with the structure of a geodesic space by setting:
\begin{equation}
\label{eq:def_newdistance_convex}
 d(c_0,c_1) \doteq \overline{W}_2(M(c_0),M(c_1))
\end{equation}
for any $c_0,c_1 \in \tilde{C}_{conv}^1$. A geodesic between the two curves is then given by $c(t) = M^{-1}(\rho(t))$ where $\rho(t)$ is a geodesic between the measures $M(c_0)$ and $M(c_1)$ for the metric $\overline{W}_2$. 
\begin{remark}
The above distance on $\tilde{C}_{conv}^1$ is in addition equivariant to the action of rotations of the plane in the sense that for any $R \in SO(2)$, we have $d(R\cdot c_0 , R \cdot c_1) = d(c_0,c_1)$. This follows from the equivariance of the constrained Wasserstein distance $\overline{W}_2$ with respect to rotations. Indeed, we see that for any probability measures $\mu_0$ and $\mu_1$ in $\mathcal{M}^+_0$ and any $R \in SO(2)$, we have $R_{\ast} M(c_0) = M(R \cdot c_0)$ and $R_{\ast} M(c_1) = M(R \cdot c_1)$. Furthermore, if $\nu=(\rho,m)$ is a pair of measures satisfying the continuity equation together with the boundary conditions $\rho_0 = \mu_0$, $\rho_1 = \mu_1$ and the closure condition $\int_{\Sp^1} e^{i\theta} d\rho_t(\theta)=0$ for almost all $t\in [0,1]$ then one easily verifies that $\tilde{\nu}=(\tilde{\rho},\tilde{m})$ defined by $\tilde{\rho} = (R_{\ast} \rho_t) \otimes dt$ and $\tilde{m} = (R_{\ast} m_t) \otimes dt$ still satisfy the continuity equation, the closure condition and that:
\begin{equation*}
 \int_{[0,1] \times \Sp^1} f\left(\frac{d\tilde{\nu}}{d\lambda}\right) d\lambda = \int_{[0,1] \times \Sp^1} f\left(\frac{d\nu}{d\lambda}\right) d\lambda.
\end{equation*}
Then taking the infimum leads to $\overline{W}_2(M(R \cdot c_0),M(R \cdot c_1)) = \overline{W}_2(M(c_0),M(c_1))$. This equivariance property allows to further quotient out rotations and obtain a distance between unit length convex curves modulo rigid motions that writes:
\begin{equation*}
 d'(c_0,c_1) \doteq \min_{R \in SO(2)} d(c_0,R\cdot c_1).
\end{equation*}
\end{remark}

\subsubsection{Numerical approach}
\label{ssec:constrained_Wasserstein_numerics}
We now derive a numerical method to estimate the constrained Wasserstein distance $\overline{W}_2$ between two discretized densities of $\Sp^1$ and thereby the distance $d$ between convex curves given by \eqref{eq:def_newdistance_convex}. Our proposed approach mainly follows and adapts the first-order proximal methods introduced in \cite{papadakis2014optimal} for the computation of Wasserstein metrics, specifically the primal-dual algorithm which itself is a special instantiation of \cite{chambolle2011first}. We briefly present the main elements of the algorithm in the following paragraphs.  

We shall consider measures on $[0,1] \times \Sp^1$ discretized over two types of regular time/angle grids: centered and a staggered grids. We define the centered grid $\mathcal{G}_c$ with $P+1$ time samples and $N$ angular samples as:
\begin{equation*}
 \mathcal{G}_c = \left\{\left(t_i = \frac{i}{P}, \theta_j = 2\pi\frac{j}{N} \right) \ | \ 0\leq i \leq P, \ 0\leq j \leq N-1 \right\}.
\end{equation*}
Discrete measure pairs sampled on the centered grid will be then represented as an element $V$ of the space $\mathcal{E}_c \doteq \mathbb{R}^{\mathcal{G}_c} \times \mathbb{R}^{\mathcal{G}_c}$. The staggered grids consist of time and angular samples shifted to the middle of the samples of $\mathcal{G}_c$. Specifically, we introduce a time and a space staggered grid defined as follows:
\begin{align*}
 &\mathcal{G}_s^t = \left\{\left(t_i = \frac{i+1/2}{P}, \theta_j = 2\pi\frac{j}{N} \right) \ | \ -1\leq i \leq P, \ 0\leq j \leq N-1 \right\} \\
 &\mathcal{G}_s^\theta = \left\{\left(t_i = \frac{i}{P}, \theta_j = 2\pi\frac{j+1/2}{N} \right) \ | \ 0\leq i \leq P, \ -1\leq j \leq N-2 \right\}.
\end{align*}
We shall then denote $U=(\rho,m)$ an element of $\mathcal{E}_s \doteq \mathbb{R}^{\mathcal{G}_s^t} \times \mathbb{R}^{\mathcal{G}_s^\theta}$ which represents a pair of discretized measures of $[0,1] \times \Sp^1$ on the staggered grids and we introduce the interpolation operator $\mathcal{I}: \mathcal{E}_s \rightarrow \mathcal{E}_c$ defined by $\mathcal{I}(U)=(\bar{\rho},\bar{m})$ with 
\begin{align*}
&\bar{\rho}_{i,j} = \frac{\rho_{i-1,j} + \rho_{i,j}}{2}, \ \text{for } 0\leq i \leq P, \ 0\leq j \leq N-1. \\
&\bar{m}_{i,j} = \left\{
    \begin{aligned}
     &\frac{m_{i,j-1} + m_{i,j}}{2}, \ \text{for } 0\leq i \leq P, \ 0\leq j \leq N-2 \\
     &\frac{m_{i,N-2} + m_{i,-1}}{2}, \ \text{for } 0\leq i \leq P, \ j=N-1
    \end{aligned}
\right.
\end{align*}
Note that the slight difference between the time and angular components and with the approach of \cite{papadakis2014optimal} is due to the periodic boundary condition on the samples $\theta_j \in \Sp^1$. We also define a discrete divergence operator $\text{div}: \mathcal{E}_s \rightarrow \mathbb{R}^{\mathcal{G}_c}$ as $\text{div}(U)_{i,j} = P(\rho_{i,j} -\rho_{i-1,j}) + N(m_{i,j} -m_{i,j-1})$ for all $0\leq i \leq P, \ 0\leq j \leq N-1$. This allows to write a discrete version of \eqref{eq:mod_Wasserstein_def} as a convex minimization problem:
\begin{equation}
\label{eq:constrained_Wasserstein_discrete}
 \min_{U=(\rho,m)\in \mathcal{E}_s} \left\{\sum_{i=0}^{P} \sum_{j=0}^{N-1} f\left(\mathcal{I}(U)_{i,j}\right) + \iota_{\mathcal{C}(\rho_0,\rho_1)}(U) \right\}.
\end{equation}
where 
\begin{equation*}
 \mathcal{C}(\rho_0,\rho_1)\doteq \left\{ U\in \mathcal{E}_s \ \text{s.t } 
  \left\{
    \begin{aligned}
         &\rho_{0,j} = (\rho_0)_j, \ \rho_{P,j} = (\rho_1)_j, \ \text{for all } -1\leq j \leq N-2\\
         &\text{div}(U)_{i,j} = 0, \ \text{for all } 0\leq i \leq P, \ 0\leq j \leq N-1 \\
         &\sum_{j=0}^{N-1} \cos(\theta_j) \rho_{i,j} =0, \ \sum_{j=0}^{N-1} \sin(\theta_j) \rho_{i,j} =0 \ \text{for all } -1\leq i \leq P.
    \end{aligned}
 \right.
 \right\}
\end{equation*}
and $\iota_{\mathcal{C}(\rho_0,\rho_1)}(U) = 0$ if $U \in \mathcal{C}(\rho_0,\rho_1)$, $\iota_{\mathcal{C}(\rho_0,\rho_1)}(U) = +\infty$ if $U \notin \mathcal{C}(\rho_0,\rho_1)$. 

To solve the convex problem \eqref{eq:constrained_Wasserstein_discrete}, we will use a primal-dual method adapted from the work of \cite{chambolle2011first}. We first recall the definition of the proximal operator of a convex function $h: \mathbb{R}^k \rightarrow \mathbb{R}$:
\begin{equation*}
 \text{Prox}_{h}(x) = \underset{y\in \mathbb{R}^k}{\text{argmin}} \ \frac{1}{2} \|x-y\|^2 + h(y)
\end{equation*}
as well as the convex dual of $h$:
\begin{equation*}
 h^*(w) = \underset{y\in \mathbb{R}^k}{\max} \ \langle y,w \rangle - h(y).
\end{equation*}
The function to minimize in \eqref{eq:constrained_Wasserstein_discrete} is of the form $F(\mathcal{I}(U)) + G(U)$. The primal-dual algorithm we implement consists of the following iterative updates on the sequences $(U^{(n)},V^{(n)},\Gamma^{(n)}) \in \mathcal{E}_s \times \mathcal{E}_c \times \mathcal{E}_s$ starting from an initialization $U^{(0)}\in \mathcal{E}_s$, $V^{(0)}\in \mathcal{E}_c$ and $\Gamma^{(0)}=U^{(0)}$:
\begin{equation}
\label{eq:primal_dual_algo}
    \begin{aligned}
         &V^{(n+1)} = \text{Prox}_{\sigma F^*}(V^{(n)}+\sigma \mathcal{I} \Gamma^{(n)}) \\
         &U^{(n+1)} = \text{Prox}_{\tau G}(U^{(n)}-\tau \mathcal{I}^* V^{(n+1)}) \\
         &\Gamma^{(n+1)} = U^{(n+1)} + \theta(U^{(n+1)}-U^{(n)})
    \end{aligned}
\end{equation}
where $\sigma$, $\tau$ are positive step sizes and $0\leq \theta \leq 1$ an inertia parameter. It follows from the general result of \cite{chambolle2011first} that the algorithm converges to a solution of \eqref{eq:constrained_Wasserstein_discrete} if $\tau \sigma \|\mathcal{I}\|^2 <1$ where $\|\mathcal{I}\|$ denotes the operator norm of $\mathcal{I}$. We typically have $\|\mathcal{I}\|\approx 1$ as $\mathcal{I}$ is here an interpolation operator. 

It only remains to specify the exact expressions of the proximal operators appearing in the first two equations of \eqref{eq:primal_dual_algo}. First, by Moreau's identity, one has that for all $V \in \mathcal{E}_c$:
\begin{equation*}
 \text{Prox}_{\sigma F^*}(V) = V-\sigma \text{Prox}_{F/\sigma}(V/\sigma).
\end{equation*}
Now the proximal of $F$ is computed in \cite{papadakis2014optimal} Proposition 1 from which we obtain for all $\tilde{V} \in \mathcal{E}_c$:
\begin{equation*}
 \text{Prox}_{F/\sigma}(\tilde{V}) = \left(\text{Prox}_{f/\sigma}(\tilde{V}_{i,j})\right)_{\substack{0\leq i \leq P\phantom{-1} \\ 0\leq j \leq N-1}}
\end{equation*}
where 
\begin{equation*}
 \text{Prox}_{f/\sigma}(d,m) = 
   \left\{
    \begin{aligned}
         &\left(\frac{\sigma d r^\star(d,m)}{\sigma d + 1}, r^\star(d,m)\right) \ \ &\text{if } r^\star(d,m)>0 \\
         &\left(0, 0\right) &\text{otherwise} 
    \end{aligned}
 \right.
\end{equation*}
with $r^\star(d,m)$ being the largest real root of the polynomial equation $(x-d)(x+1/\sigma)^2-\frac{1}{2\sigma}m^2 = 0$. 

To express the second update in \eqref{eq:primal_dual_algo}, we first point out that $\mathcal{I}^*: \mathcal{E}_c \rightarrow \mathcal{E}_s$ is given by $(\rho,m) = \mathcal{I}^*(\bar{\rho},\bar{m})$ with:
\begin{align*}
&\rho_{i,j} = \left\{
    \begin{aligned}
     &\frac{\bar{\rho}_{i+1,j}}{2}, \ \text{for } i=-1, \ 0\leq j \leq N-2 \\
     &\frac{\bar{\rho}_{i,j} + \bar{\rho}_{i+1,j}}{2}, \ \text{for } 0\leq i \leq P-1, \ 0\leq j \leq N-1 \\
     &\frac{\bar{\rho}_{i,j}}{2}, \ \text{for } i=P, \ 0\leq j \leq N-2
    \end{aligned}
    \right.
\\
&m_{i,j} = \left\{
    \begin{aligned}
     &\frac{\bar{m}_{i,0} + \bar{m}_{i,N-1}}{2}, \ \text{for } 0\leq i \leq P, \ j=-1 \\ 
     &\frac{\bar{m}_{i,j} + \bar{m}_{i,j+1}}{2}, \ \text{for } 0\leq i \leq P, \ 0\leq j \leq N-2 
    \end{aligned}
\right.
\end{align*}
Moreover since $G$ is here the convex indicator function of the constraint set $\mathcal{C}(\rho_0,\rho_1)$, we have $\tau G = G$ for any $\tau >0$ and $\text{Prox}_{\tau G} = \text{Prox}_{G} = \text{Proj}_{\mathcal{C}(\rho_0,\rho_1)}$ the orthogonal projector on the convex set $\mathcal{C}(\rho_0,\rho_1)$. In addition to the usual boundary and divergence constraints, we also have to deal with the two extra closure conditions. We introduce the operator $\Xi: \mathcal{E}_s \rightarrow \mathbb{R}^{\mathcal{G}_c} \times (\mathbb{R}^{N} \times \mathbb{R}^{N}) \times (\mathbb{R}^{P+2} \times \mathbb{R}^{P+2})$ defined for any $U=(\rho,m) \in \mathcal{E}_s$ by:
\begin{equation*}
 \Xi(U) = \left(\text{div}(U),(\rho_{-1,j},\rho_{P,j})_{0\leq j \leq N-1},\left(\sum_{j=0}^{N-1}\cos(\theta_j) \rho_{i,j},\sum_{j=0}^{N-1}\sin(\theta_j) \rho_{i,j}\right)_{-1\leq i \leq P} \right)
\end{equation*}
which allows us to write $\mathcal{C}(\rho_0,\rho_1) = \big\{U \in \mathcal{E}_s \ | \ \Xi(U) = b \doteq \left(0,(\rho_0,\rho_1),(0,0) \right) \big\}$. Thus, we can express the orthogonal projection on $\mathcal{C}(\rho_0,\rho_1)$ as:
\begin{equation*}
 \text{Proj}_{\mathcal{C}(\rho_0,\rho_1)}(U) = U - \Xi^*\Delta^{-1}\Xi(U) + \Xi^*\Delta^{-1} b
\end{equation*}
where $\Delta=\Xi\Xi^*$ can be interpreted as a modified Laplace operator on the spatial-angular domain, which in practice we can precompute together with its inverse before iterating \eqref{eq:primal_dual_algo} or alternatively implement in the Fourier domain. 

\subsubsection{Numerical results}
\begin{figure}
    \begin{tabular}{cccc}
    \includegraphics[trim = 25mm 25mm 25mm 25mm ,clip,width=4cm]{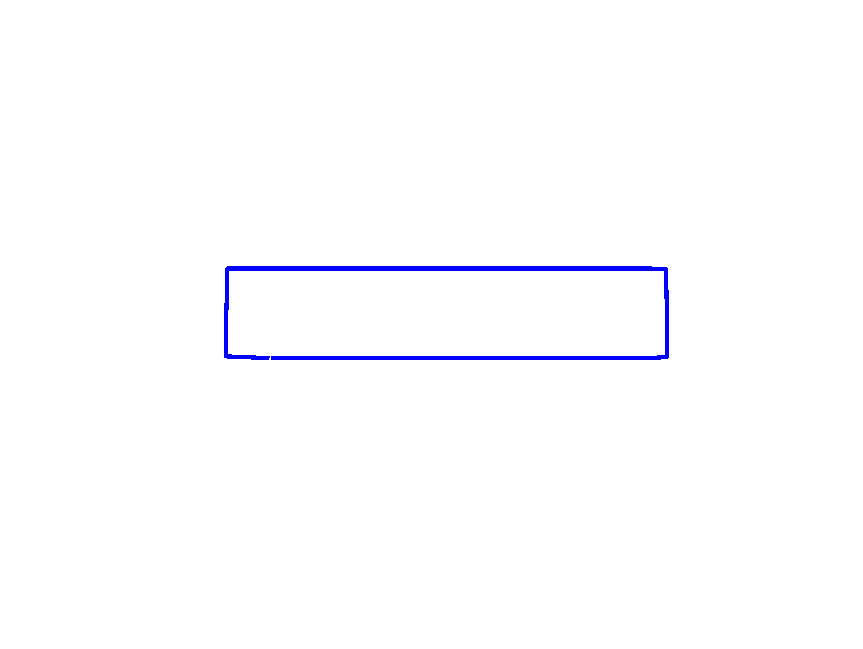} 
    &\includegraphics[trim = 25mm 25mm 25mm 25mm ,clip,width=4cm]{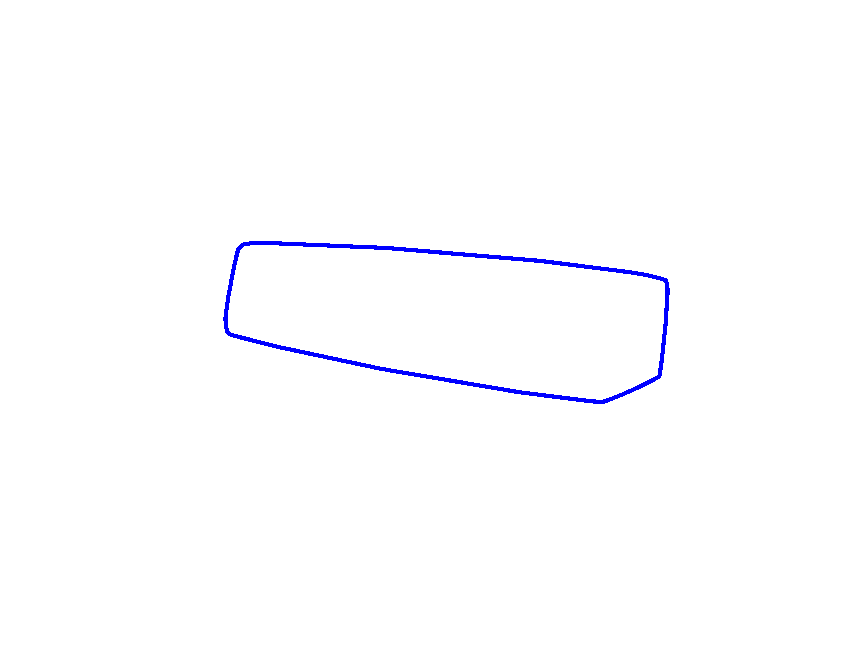}
    &\includegraphics[trim = 25mm 25mm 25mm 25mm ,clip,width=4cm]{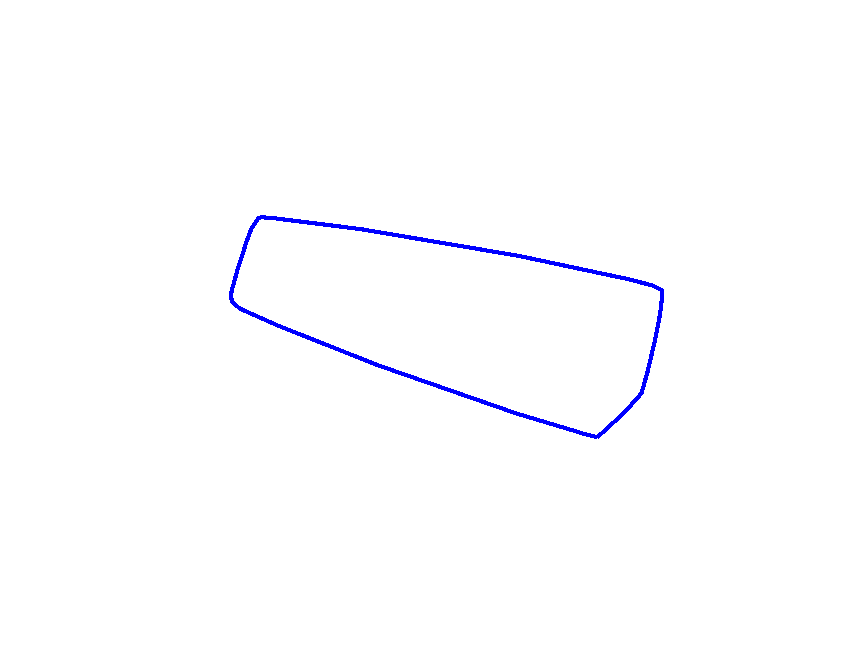} 
    &\includegraphics[trim = 25mm 25mm 25mm 25mm ,clip,width=4cm]{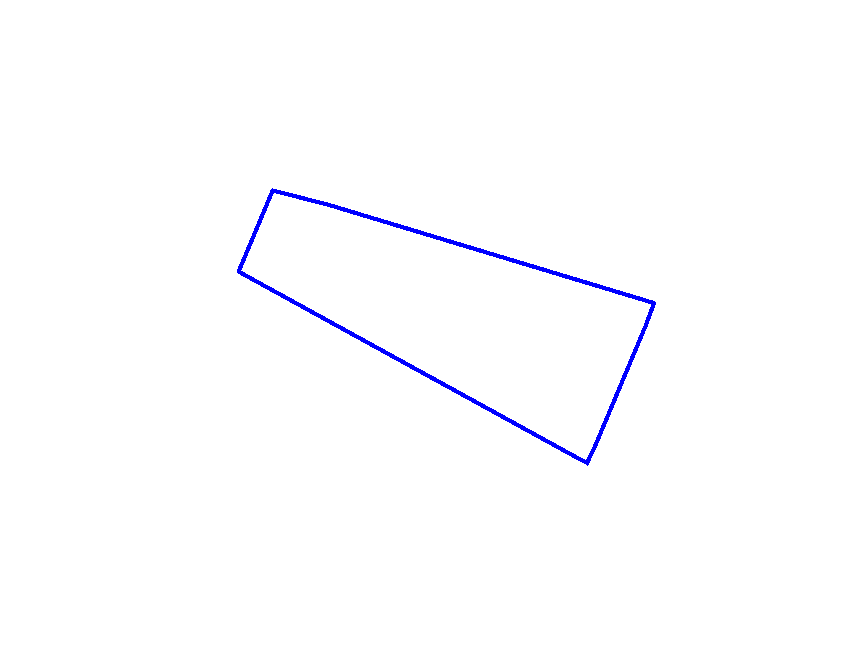}\\
    $t=0$ & $t=1/3$ & $t=2/3$ & $t=1$ \\
    \includegraphics[trim = 5mm 5mm 5mm 5mm ,clip,width=4cm]{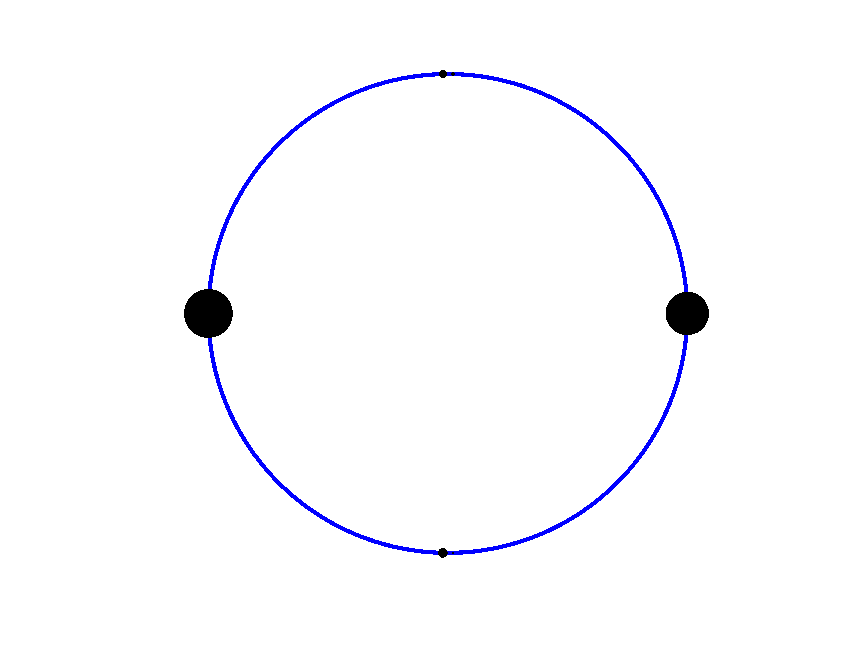} 
    &\includegraphics[trim = 5mm 5mm 5mm 5mm ,clip,width=4cm]{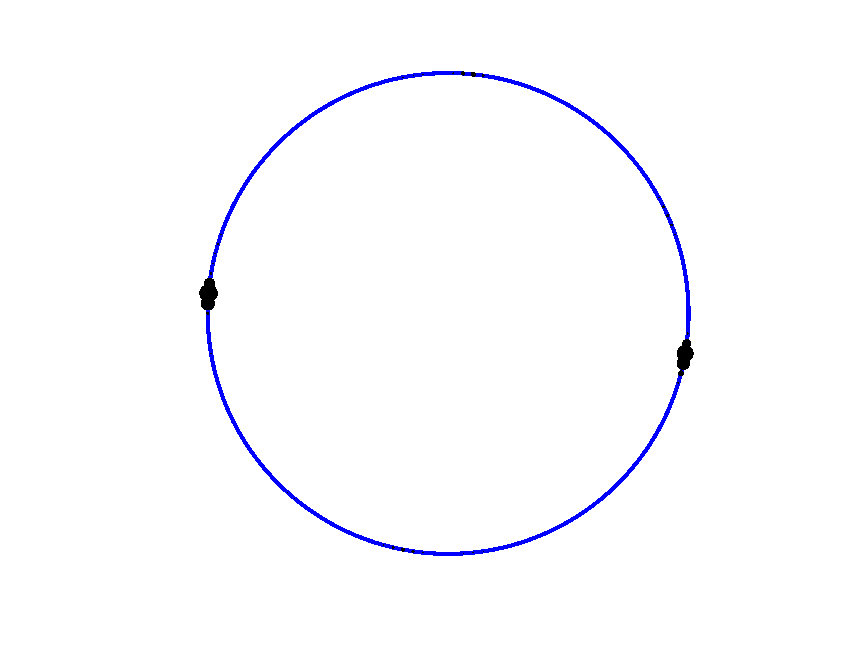}
    &\includegraphics[trim = 5mm 5mm 5mm 5mm ,clip,width=4cm]{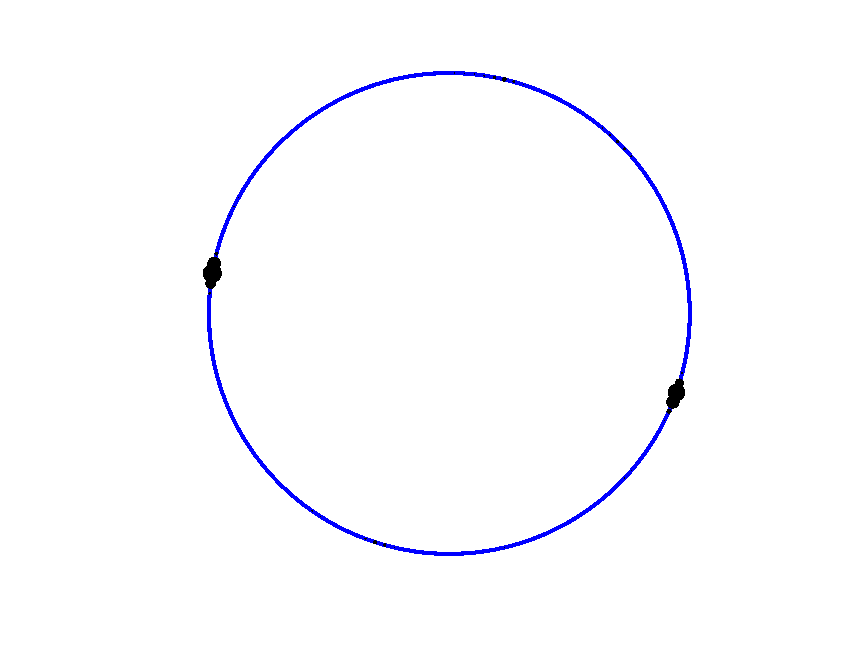} 
    &\includegraphics[trim = 5mm 5mm 5mm 5mm ,clip,width=4cm]{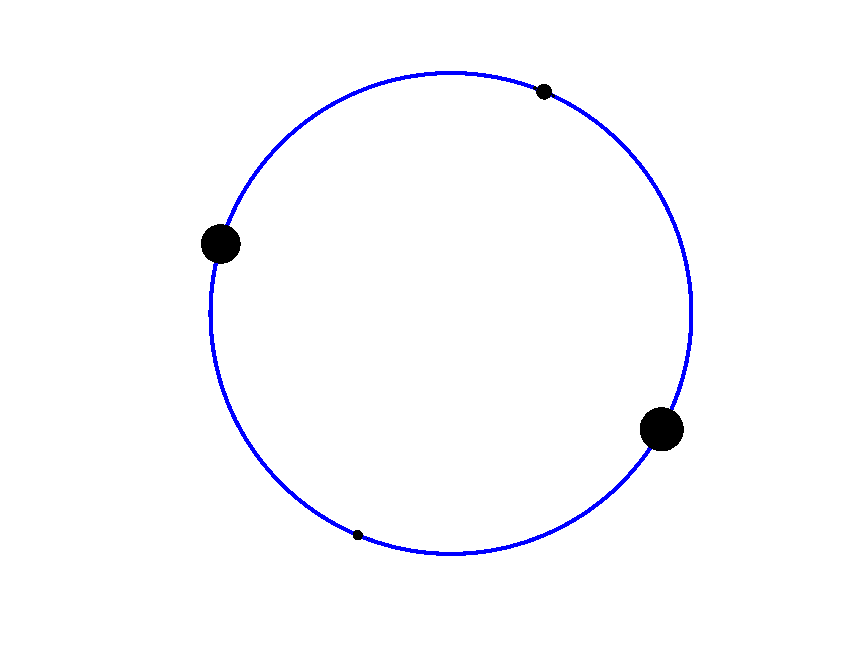}    
    \end{tabular}
    \caption{Geodesic for the constrained Wasserstein metric computed with the proposed primal-dual algorithm. On top, the intermediate convex curves $c(t) \in \tilde{C}_{conv}$ and in the bottom row are shown the associated measures $\mu(t)$.}
    \label{fig:geod_constrained_OT_metric1}
\end{figure}

We present a few numerical results obtained with the above approach which we implemented in Python. In all experiments, we take $N=150$ angular samples on $\Sp^1=[0,2\pi)$ and $P=50$ time samples in $[0,1]$. As for the step parameters of the primal-dual algorithm, we picked $\sigma = 0.5$, $\tau = 0.25$ and $\theta=0.6$. We run the algorithm \eqref{eq:primal_dual_algo} for 1000 iterations although we observe in practice that convergence to a minimum is typically reached after around 250 iterations (cf the energy plot in Figure \ref{fig:geod_constrained_OT_metric2}).  

First, in Figure \ref{fig:geod_constrained_OT_metric1}, is shown the geodesic between the same polygons as in Figures \ref{fig:geod_kernel_metric} and \ref{fig:geod_OT_metric} for the new constrained Wasserstein metric. Observe that unlike kernel metrics, the geodesic involves mass transportation rather than pure interpolation between the measures but that unlike the standard Wasserstein distance, the closure constraint is satisfied along the path and the corresponding shapes all stay within the set of closed convex curves. We further illustrate those effects with another example of a geodesic between two density measures and the corresponding geodesic between the convex curves in Figure \ref{fig:geod_constrained_OT_metric2}.  

\begin{figure}
    \includegraphics[width=5cm]{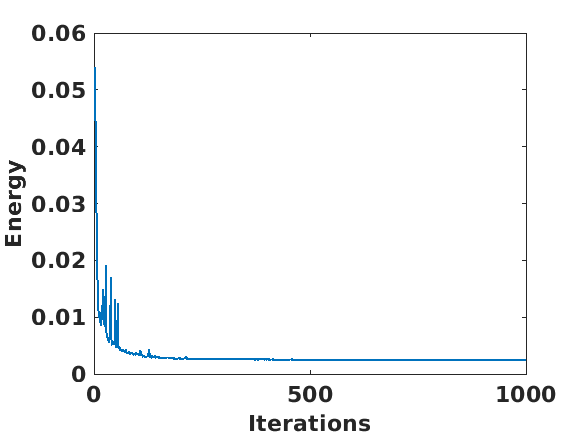} 
    \begin{tabular}{cccc}
    \includegraphics[trim = 25mm 10mm 25mm 10mm ,clip,width=4cm]{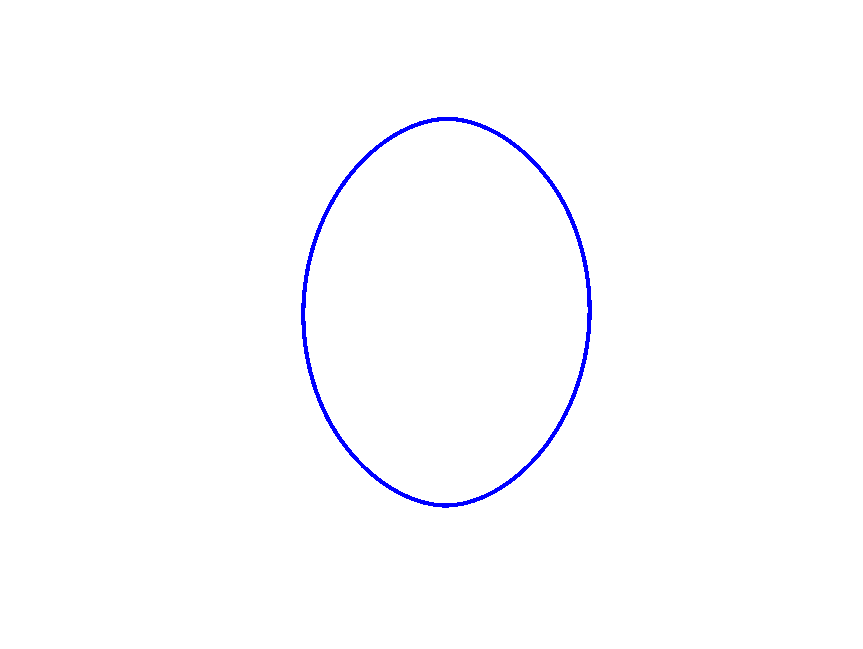} 
    &\includegraphics[trim = 25mm 10mm 25mm 10mm ,clip,width=4cm]{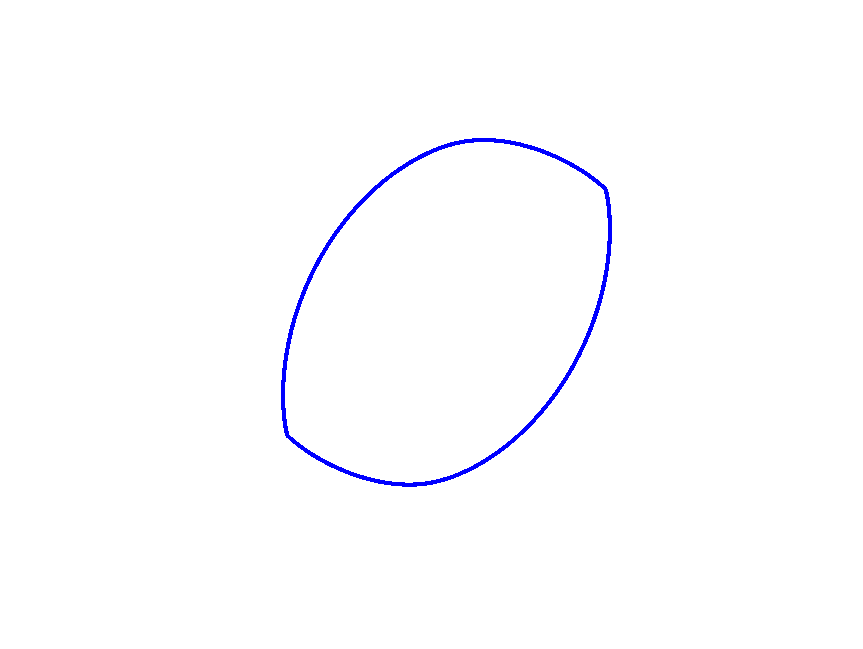}
    &\includegraphics[trim = 25mm 10mm 25mm 10mm ,clip,width=4cm]{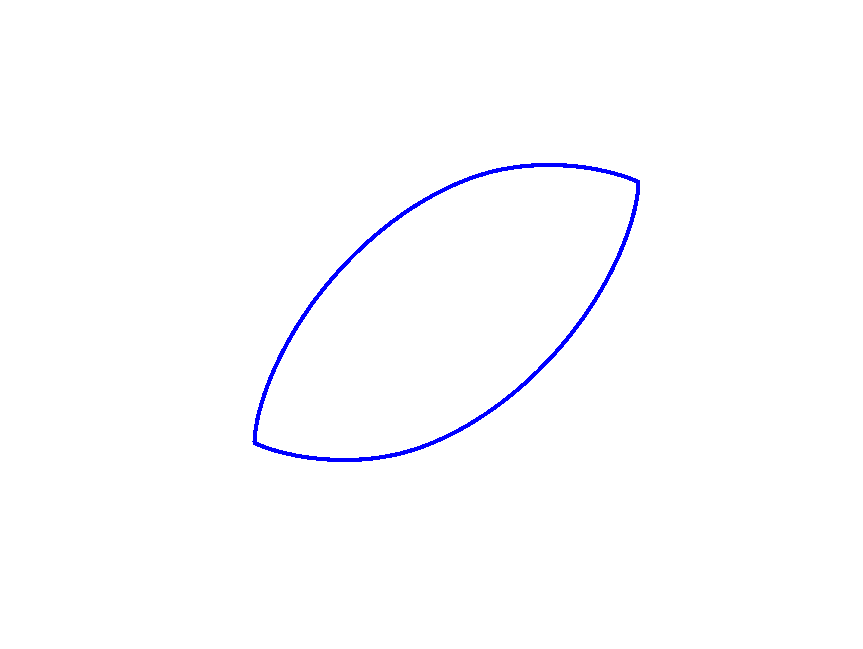} 
    &\includegraphics[trim = 25mm 10mm 25mm 10mm ,clip,width=4cm]{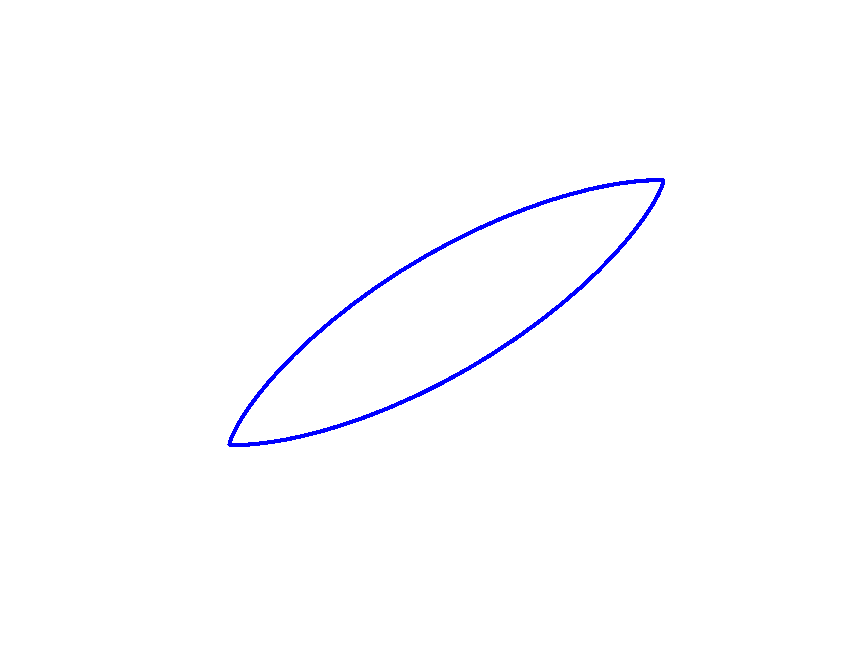}\\
    $t=0$ & $t=1/3$ & $t=2/3$ & $t=1$ \\
    \includegraphics[trim = 5mm 5mm 5mm 5mm ,clip,width=4cm]{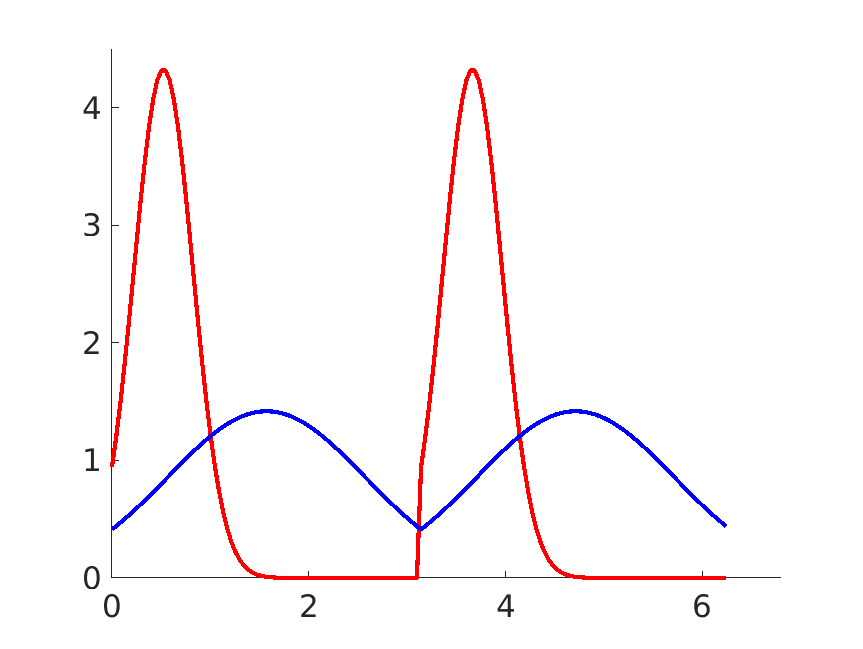} 
    &\includegraphics[trim = 5mm 5mm 5mm 5mm ,clip,width=4cm]{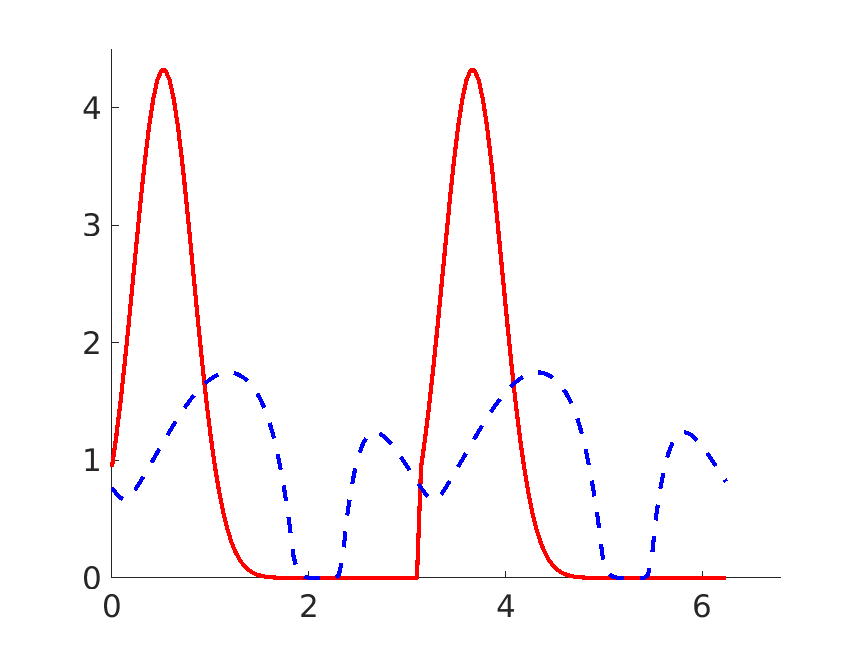}
    &\includegraphics[trim = 5mm 5mm 5mm 5mm ,clip,width=4cm]{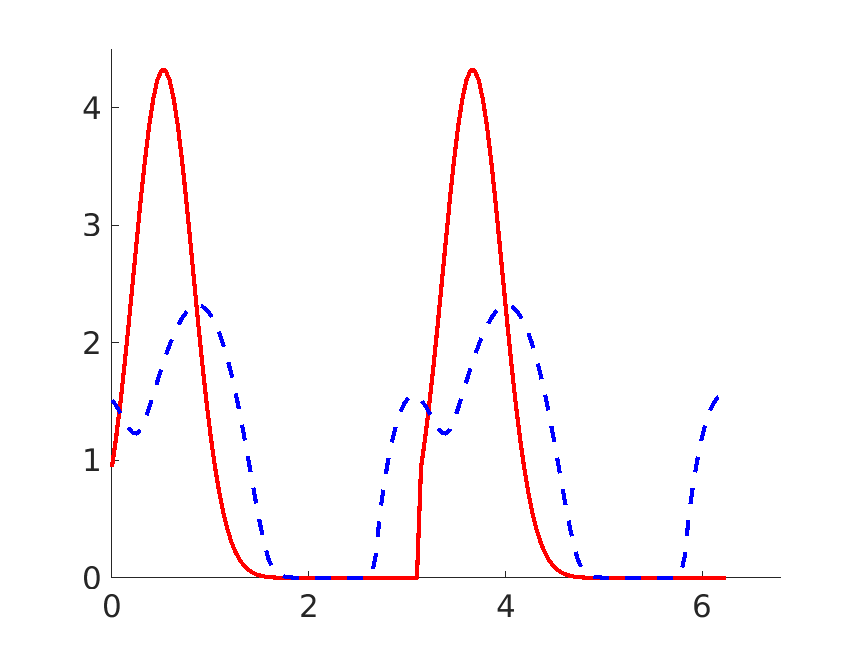} 
    &\includegraphics[trim = 5mm 5mm 5mm 5mm ,clip,width=4cm]{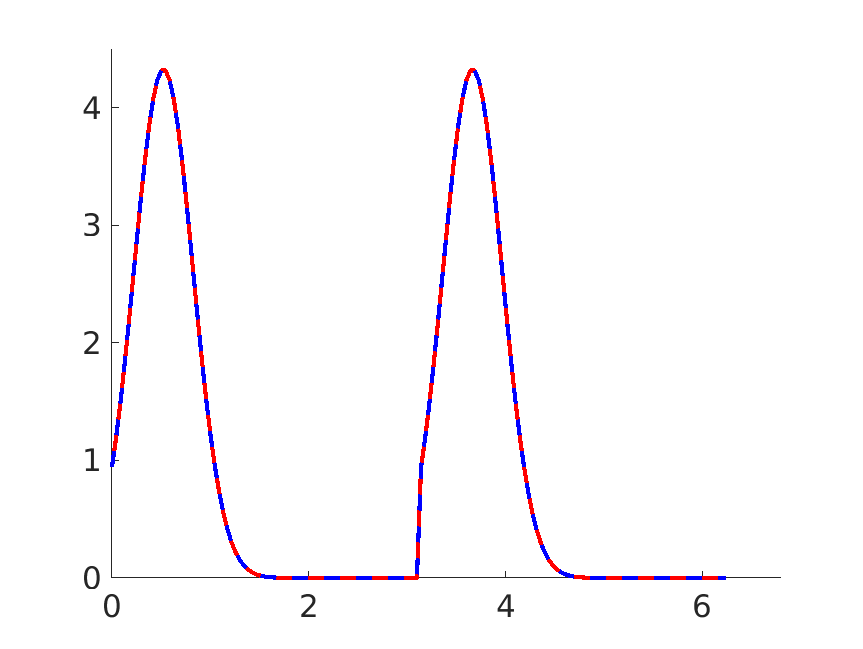}    
    \end{tabular}
    \caption{Geodesic for the constrained Wasserstein metric between two density measures. The evolution of the energy throughout the iterations of the proposed primal-dual algorithm is plotted on the top row to illustrate the convergence. On the middle row, are shown the intermediate convex curves $c(t) \in \tilde{C}_{conv}$ along the estimated geodesic. On the bottom row we display the final density $\rho_1$ in red and the density $\rho(t)$ along the geodesic in blue.}
    \label{fig:geod_constrained_OT_metric2}
\end{figure}

\section{Some open problems and future directions}
\label{sec:future_directions}
As a conclusion to this paper, we discuss a few possible topics and tracks for future investigation that arise as natural follow-ups to this work and the results we presented.

\subsection*{Unbalanced optimal transport for convex shape analysis}
In section \ref{ssec:constrained_Wasserstein}, we introduced a distance based on a constrained version of the Wasserstein metric between length measures which turns the set of convex closed curves of length one into a length space. There are yet several questions which are left partly unaddressed when it comes to this distance and its properties. In particular, it remains to be studied whether $\overline{W}_2$ is finite between any pair of probability measures of $\mathcal{M}^+_0$ or at least if the result of Proposition \ref{prop:finiteness_constrained_Wasserstein} could be extended to other families of measures. Besides, $\overline{W}_2$ is only defined for probability measures which implies that the resulting distance $d$ in \eqref{eq:def_newdistance_convex} requires the convex curves to be normalized to have unit length. This makes such a distance unadapted to retrieve and quantify e.g. global or local changes in scale. 

We believe that both of the previous issues could be addressed by considering an unbalanced extension of the metric $\overline{W}_2$ to all measures of $\mathcal{M}^+_0$, along similar lines as the unbalanced frameworks for standard optimal transport proposed in works such as \cite{Piccoli2014,liero2016optimal,Chizat2018}. In the context of this paper, this could be done by adding a source term to the continuity equation, namely consider $\partial_t \rho + \partial_\theta m = \zeta$ with $\zeta$ being a signed measure of $[0,1] \times \Sp^1$ that models an additional transformation of the measure $\rho$ in conjunction with mass transportation. Then, as in \cite{liero2016optimal,Chizat2018}, one could penalize this extra source term based on the Fischer-Rao metric of $\zeta$, which would lead to define the following distance between any $\mu_0,\mu_1 \in \mathcal{M}^+_0$:
\begin{equation*}
   \overline{W}_{FR}(\mu_0,\mu_1) = \inf_{\nu = (\rho,m,\zeta)} \left\{ \int_{[0,1] \times \Sp^1} g\left(\frac{d\nu}{d\lambda}\right) \ d\lambda \ | \ \text{subj. to } \left\{
    \begin{array}{lll}
         \partial_t\rho + \partial_\theta m= \zeta \\
          \rho_0=\mu_0, \ \rho_1=\mu_1 \\
          \int_{\Sp^1} e^{i\theta} d\rho_t(\theta)=0, \ \text{for a.e } t\in[0,1]
    \end{array}
\right.\right\}.
\end{equation*}
where $\lambda$ is such that $\rho,m,\zeta \ll \lambda$ and $g:\mathbb{R} \times \mathbb{R} \times \mathbb{R} \rightarrow \mathbb{R}_{+}$ is given by:
\begin{equation*}
 g(d,m,\zeta) = 
 \left\lbrace\begin{aligned}
  &\frac{|m|^2+\delta^2 \zeta^2}{2d} \ \ \text{if } d>0 \\
  &0  \ \ \text{if } (d,m,\zeta)=(0,0,0) \\
  &+\infty \ \ \text{otherwise}
  \end{aligned}\right.
\end{equation*}
with $\delta>0$ a weighting parameter between the two components of the cost. In this case, it becomes easy to show that $\overline{W}_{FR}(\mu_0,\mu_1)$ is finite for any measures $\mu_0,\mu_1 \in \mathcal{M}^+_0$ by simply considering the special path $\rho_t = (1-t)\rho_0 + t \rho_1$, $m_t = 0$ and $\zeta_t = \rho_1 - \rho_0$ for all $t \in [0,1]$. Although we leave it for future work, we expect the existence of geodesics and distance properties to hold for $\overline{W}_{FR}$ by extending the proof of Theorem \ref{thm:existence_geodesics_constr_Wass}. Moreover, the precise analysis of its topological properties and how the resulting metric between convex curves compare to other geometric distances remain to be examined.

\subsection*{Generalization to higher dimension}
Although this work focused on the case of planar curves, the concept of length measure extends to closed hypersurfaces in any dimension which is known as area measures, as mentioned in the introduction. Area measures, particularly of surfaces in $\R^3$, is a central concept in the Brunn-Minkowski theory of mixed volumes \cite{schneider_2013} but also appear in some applications such as object recognition \cite{hebert1995spherical} or surface reconstruction from computerized tomography \cite{prince1990reconstructing}. The area measure of a $(n-1)$-dimensional closed oriented submanifold (more generally rectifiable subset) $S$ of $\R^n$ can be defined, similarly to Definition \ref{def:length_measure}, as the positive Radon measure on $\Sp^{n-1}$ given by $\sigma_S(B) = \text{Vol}^{n-1}(\{x \in S \ | \ \vec{n}_S(x) \in B\})$ for all Borel subset $B \subset \Sp^{n-1}$ where $\vec{n}_S(x) \in \Sp^{n-1}$ denotes the unit normal vector to $S$ at $x$ and $\text{Vol}^{n-1}$ is the $(n-1)$ volume measure i.e. the Hausdorff measure of dimension $(n-1)$. Importantly, the Minkowski-Fenchel-Jessen theorem still holds for general area measures, namely the area measure again characterizes a convex set up to translation. However, there are two significant differences when $n\geq 3$ compared to the situation of planar curves. First the Minkowski sum of two convex does not generally correspond to the sum of their area measures. Second, reconstructing the convex shape from its area measure, even for discrete measures and polyhedra, is no longer straightforward: it is in fact an active research topic and several different approaches and algorithms have been proposed, see e.g. \cite{zouaki2003representation,lachand2005minimizing,Gardner2006}.   

In connection to the results presented in this paper, we expect for instance that an isoperimetric characterization of convex sets similar to Theorem \ref{thm:max_area_convex} should hold for the positive volume enclosed by $(n-1)$-dimensional non self-intersecting rectifiable sets that share the same area measure under the adequate regularity assumptions. Such a property has been shown in particular for polyhedra in \cite{Boroczky1986}. As for the construction of metrics and geodesics between convex shapes, the mathematical construction of the constrained Wasserstein metric presented in Section \ref{ssec:constrained_Wasserstein_theory} can be a priori adapted to area measures in any dimension, by replacing the closure constraint to $\int_{\Sp^{n-1}} x d\rho_t(x) = 0$ for almost all $t\in [0,1]$. However, the numerical implementation of such metrics along the lines of the approach of Section \ref{ssec:constrained_Wasserstein_numerics} would induce additional difficulties. Indeed, the computation of the operators on grids over higher-dimensional sphere becomes more involved and numerically intensive. Moreover, recovering the convex curves associated to a geodesic in the space of area measures would further require, as explained above, applying some reconstruction algorithm a posteriori. Finally, an important issue for future investigation is to derive an efficient implementation of the variation of the distance with respect to the discrete distributions, which could be then used for instance to estimate K\"{a}rcher means in the space of convex sets.

\section*{Acknowledgements}
The authors would like to thank F-X Vialard for some useful insights in relation to the optimal transport model presented in this paper. This work was supported by the National Science Foundation (NSF) under the grant 1945224.

\bibliographystyle{amsplain}
\bibliography{biblio}

\end{document}